\nonstopmode \numberwithin{equation}{section}
\nonstopmode \numberwithin{equation}{section}
\theoremstyle{plain}
\newtheorem*{lemA}{Lemma A}
\newtheorem*{lemB}{Lemma B}
\newtheorem*{lemC}{Lemma C}
\newtheorem*{lemD}{Lemma D}
\newtheorem*{lemE}{Lemma E}
\newtheorem*{lemF}{Lemma F}
\newtheorem{conj}{Conjecture}
\theoremstyle{definition}
\newtheorem{defn}{Definition}[section]
\newtheorem{thm}{Theorem}[section]
\newtheorem{prob}{Problem}[section]
\newtheorem{cor}{Corollary}[section]
\newtheorem{ques}{Question}[section]
\newtheorem{prop}{Proposition}[section]
\newtheorem{rem}{Remark}[section]
\newtheorem{lem}{Lemma}[section]
\newcounter{minutes}\setcounter{minutes}{\time}
\newcounter{hours}\setcounter{hours}{\time}
\newcounter {own}
\def\theown {\thesection       .\arabic{own}}
\newenvironment{pf}[1][]{%
 \vskip 3mm
 \noindent
 \ifthenelse{\equal{#1}{}}%
  {{\slshape Proof. }}%
  {{\slshape #1.} }%
 }%
{\qed\bigskip}
\newcounter{alphabet}
\def\be{\begin{equation}}
\def\ee{\end{equation}}
\newcommand{\bee}{\begin{enumerate}}
\newcommand{\eee}{\end{enumerate}}
\newcommand{\blem}{\begin{lem}}
\newcommand{\elem}{\end{lem}}
\newcommand{\bthm}{\begin{thm}}
\newcommand{\ethm}{\end{thm}}
\newcommand{\bcor}{\begin{cor}}
\newcommand{\ecor}{\end{cor}}
\newcommand{\beg}{\begin{examp}}
\newcommand{\eeg}{\end{examp}}
\newcommand{\begs}{\begin{examples}}
\newcommand{\eegs}{\end{examples}}
\newcommand{\bdefn}{\begin{defn}}
\newcommand{\edefn}{\end{defn}}
\newcommand{\bprob}{\begin{prob}}
\newcommand{\eprob}{\end{prob}}
\newcommand{\bei}{\begin{itemize}}
\newcommand{\eei}{\end{itemize}}
\newcommand{\bcon}{\begin{conj}}
\newcommand{\econ}{\end{conj}}
\newcommand{\bcons}{\begin{conjs}}
\newcommand{\econs}{\end{conjs}}
\newcommand{\bprop}{\begin{prop}}
\newcommand{\eprop}{\end{prop}}
\newcommand{\br}{\begin{rem}}
\newcommand{\er}{\end{rem}}
\newcommand{\brs}{\begin{rems}}
\newcommand{\ers}{\end{rems}}
\newcommand{\bo}{\begin{obser}}
\newcommand{\eo}{\end{obser}}
\newcommand{\bos}{\begin{obsers}}
\newcommand{\eos}{\end{obsers}}
\newcommand{\bpf}{\begin{pf}}
\newcommand{\epf}{\end{pf}}
\newcommand{\ba}{\begin{array}}
\newcommand{\ea}{\end{array}}
\newcommand{\beq}{\begin{eqnarray}}
\newcommand{\beqq}{\begin{eqnarray*}}
\newcommand{\eeq}{\end{eqnarray}}
\newcommand{\eeqq}{\end{eqnarray*}}
\begin{document}

\title{Sharp Bohr radius involving Schwarz functions for certain classes of analytic functions}

\author{Molla Basir Ahamed}
\address{Molla Basir Ahamed, Department of Mathematics, Jadavpur University, Kolkata-700032, West Bengal, India.}
\email{mbahamed.math@jadavpuruniversity.in}

\author{Partha Pratim Roy}
\address{Partha Pratim Roy, Department of Mathematics, Jadavpur University, Kolkata-700032, West Bengal, India.}
\email{pproy.math.rs@jadavpuruniversity.in}

\subjclass[{AMS} Subject Classification:]{Primary 30C45, 30C80}
\keywords{Bohr radius, Analytic functions, Janowski functions, Differential subordination, Typically real functions, $\alpha$-convex functions}

\def\thefootnote{}
\footnotetext{ {\tiny File:~\jobname.tex,
printed: \number\year-\number\month-\number\day,
          \thehours.\ifnum\theminutes<10{0}\fi\theminutes }
} \makeatletter\def\thefootnote{\@arabic\c@footnote}\makeatother

\begin{abstract}
The Bohr radius for an arbitrary class $\mathcal{F}$ of analytic functions of the form $f(z)=\sum_{n=0}^{\infty}a_nz^n$ on the unit disk $\mathbb{D}=\{z\in\mathbb{C} : |z|<1\}$ is the largest radius $R_{\mathcal{F}}$ such that every function $f\in\mathcal{F}$ satisfies the inequality 
\begin{align*}
	d\left(\sum_{n=0}^{\infty}|a_nz^n|, |f(0)|\right)=\sum_{n=1}^{\infty}|a_nz^n|\leq d(0, \partial f(\mathbb{D})),
\end{align*}
for all $|z|=r\leq R_{\mathcal{F}}$ , where $d(0, \partial f(\mathbb{D}))$ is the Euclidean distance. In this paper, our aim is to determine the sharp improved Bohr radius for the classes of analytic functions $f$ satisfying differential subordination relation $zf^{\prime}(z)/f(z)\prec h(z)$ and $f(z)+\beta zf^{\prime}(z)+\gamma z^2f^{\prime\prime}(z)\prec h(z)$,
where $h$ is the Janowski function. We show that improved Bohr radius can be obtained for Janowski functions as root of an equation involving Bessel function of first kind.  Analogues results are obtained in this paper for $\alpha$-convex functions and typically real functions, respectively. All obtained results in the paper are sharp and are improved version of [\textit{Bull. Malays. Math. Sci. Soc.} (2021) 44:1771-1785].
\end{abstract}

\maketitle
\pagestyle{myheadings}
\markboth{M. B. Ahamed and P. P. Roy}{Sharp improved Bohr radius for certain classes of analytic functions}
\tableofcontents
\section{\bf Introduction}
Let $\mathbb{D}:=\{z\in\mathbb{C} : |z|<1\}$ be the open unit disk in $\mathbb{C}$ and $\mathcal{H}(\mathbb{D}, \Omega)$ denote the class of analytic function mapping unit disk $\mathbb{D}$ into a domain $\Omega$. Let $\mathcal{A}$ denote the class of analytic functions in $\mathbb{D}$ normalized by $f(0)=0=f^{\prime}(0)-1$. Let $\mathcal{S}$ be the subclass of $\mathcal{S}$ consisting of univalent functions. For two analytic functions $f$ and $g$ in $\mathbb{D}$, the function $f$ is said to be subordinate to $g$, written as $f\prec g$, if there is an analytic function map $\omega : \mathbb{D}\to\mathbb{D}$ with $\omega(0)=0$ satisfying $f(z)=g(\omega(z))$. In particular, if the function $g$ is univalent in $\mathbb{D}$, then $f$ is subordinate to $g$ is equivalent to $f(0)=g(0)$ and $f(\mathbb{D})\subset g(\mathbb{D})$.\vspace{1.2mm}

Let $\varphi$ be the analytic function with positive real part in $\mathbb{D}$ that maps the unit disk $\mathbb{D}$ onto regions starlike with respect to $1$, symmetric with respect to the real axis and normalized by the conditions $\varphi(0)=1$ and $\varphi^{\prime}(0)>0$. For such functions, Ma and Minda (see \cite{Ma-Minda-1992})  introduced the following classes:
\begin{align*}
	\mathcal{ST}[\varphi]:=\bigg\{ f\in\mathcal{A} : \frac{z f^{\prime}(z)}{g(z)}\prec \varphi(z)\bigg\}
\end{align*}
 and 
 \begin{align*}
 	\mathcal{CV}[\varphi]:=\bigg\{ f\in\mathcal{A} : 1+\frac{zf^{\prime\prime}(z)}{f^{\prime}(z)}\prec \varphi(z)\bigg\}.
 \end{align*}
\begin{enumerate}
	\item[(a)] On taking $\varphi(z)=(1+Az)/(1+Bz)$, $(-1\leq B<A\leq 1)$, the class $\mathcal{ST}[\varphi]$ reduces to the familiar class consisting of Janowski starlike functions (see \cite{Janowski-APM-1973}), denoted by $\mathcal{ST}[A, B]$.\vspace{1.2mm}
	 
	\item[(b)] The special case $A=1-2\alpha$ and $B=-1$, that is, $\varphi(z)=(1+(1-2\alpha)z)/(1-z)$, $0\leq \alpha<1$ yields the classes $\mathcal{ST}[\alpha]$ and $\mathcal{ST}[\alpha]$ of starlike and convex functions of order $\alpha$, respectively. In particular, $\alpha=0$, that is, $A=1$ and $B=-1$ leads to the usual classes $\mathcal{ST}$ and $\mathcal{CV}$ of starlike and convex functions, respectively. \vspace{1.2mm}
	
	\item[(c)] For $A=1$ and $B=(1-M)/M$, $M>1/2$, we obtain the class $\mathcal{ST}[{M}]$, introduced by Janowski (see \cite{Janowski-APM-1970}).\vspace{1.2mm}
	
	\item[(d)] In $1939$, Robertson (see \cite{Robertson-AM-1936}) introduced a very well-known class $\mathcal{ST}^{(\beta)}:=\mathcal{ST}[-\beta, \beta]$, $0<\beta\leq 1$. \vspace{1.2mm}
	
	\item[(e)] Also, $\mathcal{ST}_{(\beta)}:=\mathcal{ST}[\beta, 0]$ leads to a class which was introduced by MacGregor in \cite{MacGregor-PAMS-1963}.
\end{enumerate}
In $1914$, Bohr (see \cite{Bohr-PLMS-1914}) discovered that if a power series of analytic function $f(z)=\sum_{n=0}^{\infty}a_nz^n$ converges in the unit disk $\mathbb{D}$ with $|f(z)|<1$ for $z\in\mathbb{D}$, then $M_f(r):=\sum_{n=0}^{\infty}|a_nz^n|\leq 1$ for $|z|=r\leq 1/6$. Weiner, Riesz and Schur independently proved that the Bohr's result holds in the disk $ \mathbb{D}_{\frac{1}{3}}:=\{z\in\mathbb{C} : |z|\leq \frac{1}{3}\}$ and the radius $1/3$ is best possible. For the class $\mathcal{B}:=\mathcal{H}(\mathbb{D}, \mathbb{D})$, the radius $1/3$ is known as the Bohr radius and the inequality $M_f(r)\leq 1$ is commonly known as the Bohr inequality. In recent years, there has been a growing interest determining the Bohr radius for various classes of functions in one and several complex variables (see \cite{Alkhaleefah-Kayumov-Ponnusamy-PAMS-2019,Kayumov-CRACAD-2018,Kayumov-Khammatova-JMAA-2021,Kayumov-MJM-2022}), class of harmonic mappings (see \cite{Ahamed-Allu-Halder-AMP-2021,Ahamed-Allu-RMJ-2022}), class of quasiconformal mappings (see \cite{Kayumov-Ponnusamy-Shakirov-Math.Nah.-2018,Liu-Ponnusamy-Wang-RACSAM-2020,Ahamed-Ahamed-MJM-2024}). \vspace{1.2mm}

Using the Euclidean distance $d$, the Bohr inequality for a function of the form $f(z)=\sum_{n=0}^{\infty}a_nz^n$ is written as 
\begin{align*}
	d\left(\sum_{n=0}^{\infty}|a_nz^n|, |a_0|\right)=\sum_{n=1}^{\infty}|a_nz^n|\leq 1-|a_0|=1-|f(0)|=d(f(0), \partial \mathbb{D}),
\end{align*}
where $\partial\mathbb{D}$ is the boundary of the disc $\mathbb{D}$.\vspace{1.2mm}

For any domain $\Omega$ and all functions $f\in\mathcal{H}(\mathbb{D}, \Omega)$, the Bohr radius is the largest radius $R_{\mathcal{H}}>0$ such that 
\begin{align*}
	d\left(\sum_{n=0}^{\infty}|a_nz^n|, |f(0)|\right)=\sum_{n=1}^{\infty}|a_nz^n|\leq 1-|f(0)|=d(f(0), \partial \mathbb{D}),
\end{align*}
for $|z|=r\leq R_{\mathcal{H}}$.\vspace{1.2mm}

Before we continue the discussion, we fix some notations here. Throughout the discussion, we assume that $\mathcal{B}=\{f\in H_\infty:||f||_\infty\leq 1\}$ and for $m\in\mathbb{N}=\{1,2,\ldots\}$, we define
\begin{align*}
\mathcal{B}_m=\{\omega\in\mathcal{B}:\omega(0)=\cdots=\omega^{(m-1)}(0)=0\;\mbox{and}\;\omega^{(m)}(0)\neq 0\}.
\end{align*} 
By using the Schwarz function in place of the initial power series coefficients, many authors  established  Bohr-type inequalities for bounded analytic functions in the unit disk (see \cite{Gang Liu-JMAA-2021,Huang-Liu-Ponnusamy-AMP-2020,Allu-Arora-JMMA-2022}). In this paper, we continue the research by determining the sharp Bohr radius using Schwarz functions in the Bohr inequalities for some classes of analytic functions on the unit disk $\mathbb{D}$. Our main results improve various existing findings in the literature, as discussed in the subsequent sections. 
\section{\bf Bohr inequality for Janowski starlike functions}
In this section, we will derive an improved Bohr radius for a specific class of Janowski starlike functions. First, we will discuss some preliminary results before stating the main findings. In \cite[Theorem 3, p.738]{Aouf-IJMMS-1987}, it was shown that the following lemmas for Janowski starlike functions are crucial in proving the main results when $p=1$ and $\alpha=0$. 
\begin{lemA}\emph{(see \cite[Lemma 1]{Anand-Jain-Kumar-BMMSS-2021})}
	If $f(z)=z+\sum_{n=2}^{\infty}a_nz^n\in\mathcal{ST}[A, B]$, then 
	\begin{align}\label{Eq-2.1}
		|a_n|\leq \prod_{k=0}^{n-2}\frac{|(B-A)+Bk|}{k+1}\; (n\geq 2)
	\end{align}
	and these bounds are sharp.
\end{lemA}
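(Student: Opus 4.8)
The plan is to convert the subordination into a first-order differential identity driven by a Schwarz function and then read off a coefficient recursion amenable to strong induction. Since $f\in\mathcal{ST}[A,B]$ means $zf'(z)/f(z)\prec(1+Az)/(1+Bz)$, there is a Schwarz function $\omega$ with $\omega(0)=0$ and $|\omega(z)|<1$ on $\mathbb{D}$ such that $zf'(z)/f(z)=(1+A\omega(z))/(1+B\omega(z))$. Clearing denominators and regrouping produces the identity
\[
zf'(z)-f(z)=\omega(z)\bigl(Af(z)-Bzf'(z)\bigr),
\]
which is the workhorse: the left-hand side has no linear term, while on the right the factor $\omega$ vanishes at the origin, so the coefficient of $z^n$ on the right involves only $a_1,\dots,a_{n-1}$.

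Writing $f(z)=z+\sum_{n\ge2}a_nz^n$ and expanding, the identity reads
\[
\sum_{n\ge2}(n-1)a_nz^n=\omega(z)\sum_{n\ge1}(A-nB)a_nz^n .
\]
With $\omega(z)=\sum_{j\ge1}\omega_jz^j$, comparing coefficients of $z^n$ gives the recursion
\[
(n-1)a_n=\sum_{m=1}^{n-1}\omega_{n-m}(A-mB)a_m ,\qquad a_1=1 .
\]
I would then argue by strong induction on $n$, the claim being $|a_n|\le c_n:=\prod_{k=0}^{n-2}\frac{|A-(k+1)B|}{k+1}$, which is literally the product in \eqref{Eq-2.1} because $|(B-A)+Bk|=|A-(k+1)B|$. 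The base case $n=2$ is immediate, since the identity forces $a_2=\omega_1(A-B)$ and hence $|a_2|\le A-B=c_2$.

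For the inductive step I would combine the hypothesis $|a_m|\le c_m$ $(m<n)$ with the contractivity of $\omega$. The coefficient bound $\sum_{j\ge1}|\omega_j|^2\le1$ (Parseval on $|z|=r$, then $r\to1$) together with Cauchy--Schwarz already controls $(n-1)|a_n|$ by $\bigl(\sum_{m<n}|A-mB|^2|a_m|^2\bigr)^{1/2}$. The delicate part is to upgrade this to the \emph{exact} product $c_n$, and the natural certificate is the Koebe-type extremal $f_0(z)=z(1+Bz)^{(A-B)/B}$ arising from $\omega(z)=z$. A direct binomial expansion shows $f_0$ has coefficients $a_n=\frac{1}{(n-1)!}\prod_{k=0}^{n-2}\bigl(A-(k+1)B\bigr)$, whose moduli are exactly $c_n$; this simultaneously certifies sharpness and pins down the constant the induction must reproduce.

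The main obstacle is precisely this sharpness step. Crude estimates (triangle inequality with $|\omega_j|\le1$, or the $\ell^2$ bound above) are lossy and do not by themselves return the product $c_n$; tracking how equality propagates through the recursion — effectively showing that the extremal $\omega$ concentrates on its leading term — is the real work. It is here that the structural inequalities for functions subordinate to $(1+Az)/(1+Bz)$ and the induction hypothesis in full strength (not merely through absolute values) must be used carefully, exactly as in the cited results of Aouf and of Anand--Jain--Kumar.
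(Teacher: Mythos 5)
First, note that the paper offers no proof of this statement at all: Lemma~A is quoted verbatim from the literature (Anand--Jain--Kumar, ultimately Aouf), so there is no internal argument to compare against; any assessment must be of your proposal on its own merits.

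Your setup is the right one — the subordination $zf'(z)/f(z)\prec(1+Az)/(1+Bz)$ does yield the identity $zf'(z)-f(z)=\omega(z)\bigl(Af(z)-Bzf'(z)\bigr)$, and the extremal $l_{(A,B)}$ does certify sharpness — but the proof has a genuine gap that you yourself flag and then do not close: the inductive step. The coefficient-by-coefficient recursion $(n-1)a_n=\sum_{m=1}^{n-1}\omega_{n-m}(A-mB)a_m$ combined with either $|\omega_j|\le1$ or Cauchy--Schwarz against $\sum_j|\omega_j|^2\le1$ is, as you say, lossy and does not return the product $c_n$; announcing that ``tracking how equality propagates'' is the real work is a statement of the problem, not a solution. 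The missing idea is Clunie's truncation method: rather than equating individual coefficients, one truncates the identity at degree $n$, writing
\begin{align*}
\sum_{m=2}^{n}(m-1)a_mz^m+\sum_{m=n+1}^{\infty}d_mz^m=\omega(z)\sum_{m=1}^{n-1}(A-mB)a_mz^m,
\end{align*}
then integrates $|\cdot|^2$ over $|z|=r$ and uses $|\omega(z)|<1$ together with Parseval to obtain, after letting $r\to1$ and discarding the nonnegative tail,
\begin{align*}
(n-1)^2|a_n|^2\le\sum_{m=1}^{n-1}\bigl(|A-mB|^2-(m-1)^2\bigr)|a_m|^2 .
\end{align*}
It is this quadratic inequality, not the linear recursion, that the induction hypothesis $|a_m|\le c_m$ feeds into to produce exactly $(n-1)^2c_n^2$ on the right (with the usual care in the Janowski case about the indices where $|A-mB|^2-(m-1)^2$ changes sign, which is where Aouf's argument does its case analysis). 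Without this step your argument establishes the recursion and the sharpness example but not the inequality \eqref{Eq-2.1} itself.
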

\begin{lemB}\emph{(see \cite[Theorem 4, p.315]{Janowski-APM-1973})}
	If $f(z)=z+\sum_{n=2}^{\infty}a_nz^n\in\mathcal{ST}[A, B]$, then for $|z|=r$; $(0\leq r<1)$,
	\begin{align}\label{Eq-2.2}
		l_{(-A, -B)}(r)\leq |f(re^{i\theta})|\leq l_{(A, B)}(r),
	\end{align}
	where $l_{(A, B)} : \mathbb{D}\to\mathbb{C}$ is given by 
	\begin{align}\label{Eq-2.3}
		l_{(A, B)}(z):=\begin{cases}
			z\left(1+Bz\right)^{(A-B)/B},\; B\neq 0;\vspace{2mm}\\
			ze^{Az},\;\;\;\;\;\;\;\;\;\;\;\;\;\;\;\;\;\;\;\;\;\; B=0.
		\end{cases}
	\end{align}
	The result is sharp.
\end{lemB}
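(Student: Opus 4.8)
The plan is to reduce the two-sided estimate to sharp pointwise bounds on $\real\,p$, where $p(z):=zf'(z)/f(z)$, via a radial integration of the logarithmic derivative. The membership $f\in\mathcal{ST}[A,B]$ means exactly that $p\prec P$ with $P(z):=(1+Az)/(1+Bz)$, so there is a Schwarz function $\omega$ with $p(z)=P(\omega(z))$. Since $f(z)/z\to 1$ as $z\to 0$, integrating the identity $\partial_\rho\log(|f(\rho e^{i\theta})|/\rho)=\rho^{-1}(\real\,p(\rho e^{i\theta})-1)$ yields
\begin{align*}
\log\frac{|f(re^{i\theta})|}{r}=\int_0^r\frac{\real\,p(\rho e^{i\theta})-1}{\rho}\,d\rho,
\end{align*}
so the whole problem rests on controlling $\real\,p$ at each fixed radius.

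First I would bound $\real\,p(\rho e^{i\theta})$ using the Schwarz lemma. Since $|\omega(\rho e^{i\theta})|\le\rho$, the value $p(\rho e^{i\theta})=P(\omega(\rho e^{i\theta}))$ lies in the image $P(\{|w|\le\rho\})$. Because $P$ is a M\"obius map with real coefficients, this image is a disk symmetric about the real axis, whence $\real\,P$ is extremized at the two real boundary points $w=\pm\rho$; as $A>B$ forces $P$ to be increasing on the real axis, this gives
\begin{align*}
\frac{1-A\rho}{1-B\rho}\le\real\,p(\rho e^{i\theta})\le\frac{1+A\rho}{1+B\rho}.
\end{align*}

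Next I would substitute these bounds into the radial integral. For the upper bound the integrand collapses to $(A-B)/(1+B\rho)$, integrating (for $B\neq 0$) to $\tfrac{A-B}{B}\log(1+Br)$ and giving $|f(re^{i\theta})|\le r(1+Br)^{(A-B)/B}=l_{(A,B)}(r)$, with the case $B=0$ producing $re^{Ar}$ after the identical computation. For the lower bound the integrand is $-(A-B)/(1-B\rho)$, integrating to $\tfrac{A-B}{B}\log(1-Br)$; a direct check that $l_{(-A,-B)}(z)=z(1-Bz)^{(A-B)/B}$ identifies the resulting estimate $r(1-Br)^{(A-B)/B}$ with the stated lower bound.

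Finally, sharpness is witnessed by the extremal functions themselves. The function $l_{(A,B)}$ satisfies $zl_{(A,B)}'/l_{(A,B)}=P(z)$, i.e.\ it corresponds to $\omega(z)=z$, so it attains the upper bound at $z=r$; its reflection $l_{(-A,-B)}(z)=-l_{(A,B)}(-z)$ corresponds to $\omega(z)=-z$, again lies in $\mathcal{ST}[A,B]$, and attains the lower bound at $z=r$. The only genuinely delicate step is the extremal analysis of the second paragraph---justifying that the real part of a M\"obius image of a disk is controlled by its two real boundary values---while the integrations and the sharpness checks are routine.
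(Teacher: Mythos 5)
The paper does not prove this statement at all: it is imported as Lemma~B with a citation to Janowski's 1973 paper (Theorem~4 there), so there is no internal proof to compare against. Your argument is the standard, classical proof of the growth theorem for $\mathcal{ST}[A,B]$ and it is correct. The subordination $zf'/f\prec(1+Az)/(1+Bz)$ together with the Schwarz lemma confines $p(\rho e^{i\theta})$ to the image of $\{|w|\le\rho\}$ under the M\"obius map $P$; since $P$ has real coefficients and its pole $-1/B$ lies outside the closed disk (as $|B|\le 1$ and $\rho<1$), that image is a closed disk symmetric about the real axis whose horizontal diameter has endpoints $P(-\rho)$ and $P(\rho)$, and monotonicity of $P$ on the real axis (from $P'(x)=(A-B)/(1+Bx)^2>0$) orders them correctly. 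The radial integration of $\partial_\rho\log\bigl(|f(\rho e^{i\theta})|/\rho\bigr)=\rho^{-1}\bigl(\real p-1\bigr)$ is legitimate because $p(0)=1$ makes the integrand bounded near $0$ and $f$ is zero-free in $\mathbb{D}\setminus\{0\}$ (implicit in the class definition), and your integrations reproduce $l_{(A,B)}(r)$ and, after the identity $l_{(-A,-B)}(z)=z(1-Bz)^{(A-B)/B}$, the lower bound as well; the $B=0$ case follows identically. The sharpness verification via $zl_{(A,B)}'/l_{(A,B)}=P(z)$ (i.e.\ $\omega(z)=z$) and the reflected function $-l_{(A,B)}(-z)=l_{(-A,-B)}(z)$ (i.e.\ $\omega(z)=-z$) is also correct. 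In short, your proof supplies exactly the argument the paper outsources to the literature.
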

\subsection{\bf Refined Bohr inequalities for Janowski starlike functions}
The Bohr radius for the Janowski starlike functions is obtained recently in \cite[Theorem 1]{Anand-Jain-Kumar-BMMSS-2021} and it is shown that the radius is sharp. In this section, we continue the study to establish a refined version of the Bohr inequality \cite[Theorem 1]{Anand-Jain-Kumar-BMMSS-2021} aiming to show the corresponding radius is sharp. To achieve our goal, Bessel function of the first kind has a pivotal role. Before, we state our main result, we recall here modified Bessel function of the first kind.
\subsection{\bf Modified Bessel function of the first kind}
Bessel functions, first defined by the mathematician Daniel Bernoulli and then generalized by Friedrich Bessel, are canonical solutions $y(x)$ of Bessel's differential equation
\begin{align*}
	\displaystyle x^{2}{\frac {d^{2}y}{dx^{2}}}+x{\frac {dy}{dx}}+\left(x^{2}-\alpha ^{2}\right)y=0
\end{align*}
for an arbitrary complex number 
$\alpha$, which represents the order of the Bessel function. Although, $\alpha$  and $-\alpha$ produce the same differential equation, it is conventional to define different Bessel functions for these two values in such a way that the Bessel functions are mostly smooth functions of 
$\alpha$. Bessel's equation arises when finding separable solutions to Laplace's equation and the Helmholtz equation in cylindrical or spherical coordinates. Bessel functions are therefore especially important for many problems of wave propagation and static potentials. In solving problems in cylindrical coordinate systems, one obtains Bessel functions of integer order $\alpha=n$; in spherical problems, one obtains half-integer orders $\alpha=n+\frac{1}{2}$. For example: Electromagnetic waves in a cylindrical waveguide,
Pressure amplitudes of inviscid rotational flows,
Heat conduction in a cylindrical object,
Modes of vibration of a thin circular or annular acoustic membrane (such as a drumhead or other membranophone) or thicker plates such as sheet metal (see Kirchhoff–Love plate theory, Mindlin–Reissner plate theory),
Diffusion problems on a lattice,
Solutions to the radial Schrödinger equation (in spherical and cylindrical coordinates) for a free particle
Position space representation of the Feynman propagator in quantum field theory. \vspace{1.2mm}

Bessel functions of the first kind, denoted as $\mathcal{J}_{\alpha}(x)$, are solutions of Bessel's differential equation. For integer or positive $\alpha$, Bessel functions of the first kind are finite at the origin $(x=0)$; while for negative non-integer $\alpha$, Bessel functions of the first kind diverge as $x$ approaches zero. It is possible to define the function by $\alpha$ times a Maclaurin series (note that $\alpha$ need not be an integer, and non-integer powers are not permitted in a Taylor series), which can be found by applying the Frobenius method to Bessel's equation:
\begin{align*}
	\mathcal{J}_{\alpha}(x)=\sum_{n=0}^{\infty}\frac{(-1)^n}{n! \Gamma(n+\alpha+1)}\left(\frac{x}{2}\right)^{2n+\alpha},
\end{align*}
where $\Gamma (n)$ is the gamma function, a shifted generalization of the factorial function to non-integer values. The Bessel function of the first kind is an entire function if $\alpha$ is an integer, otherwise it is a multi-valued function with singularity at zero.\vspace{1.2mm}

The Bessel functions can be expressed in terms of the generalized hypergeometric series as 
\begin{align*}
	\mathcal{J}_{\alpha}(x)=\frac{\left(\frac{x}{2}\right)^{\alpha}}{\Gamma(\alpha+1)}{}_0 F_1\left(\alpha+1;\; -\frac{x^2}{4}\right).
\end{align*}
Another definition of the Bessel function, for integer values of $n$, is possible using an integral representation:
\begin{align*}
	\mathcal{J}_{\alpha}(x)=\frac{1}{\pi}\int_{0}^{\pi}\cos\left(n\theta-x\sin\theta\right)d\theta=\frac{1}{2\pi}\int_{-\pi}^{\pi}e^{i(n\theta-x\sin\theta)}d\theta,
\end{align*}
which is also called Hansen-Bessel formula.\vspace{1.2mm}

In particular, if $\alpha=0$, then it is easy to see that
\begin{align}\label{Eq-2.6}
	\mathcal{J}_0(x)=\sum_{n=0}^{\infty}\frac{1}{n! \Gamma(n+1)}\left(\frac{x}{2}\right)^{2n}=\sum_{n=0}^{\infty}\frac{1}{4(n!)^2}x^{2n}.
\end{align}
Further, putting $x=2ar$, it follows from \eqref{Eq-2.6} that
\begin{align*}
	\mathcal{J}_0(2ar)=\sum_{n=0}^{\infty}\frac{(a^2r^2)^{n}}{(n!)^2}.
\end{align*}
Our first result regarding the refined Bohr inequality for the class $\mathcal{ST}[A, B]$ is as follows: we demonstrate that the radius can be determined by utilizing the Bessel function $\mathcal{J}_0(x)$ in the Bohr inequality involving the Schwarz function.
\begin{thm}\label{Th-2.2}
	Let $f(z)=z+\sum_{n=2}^{\infty}a_nz^n$ be in the class $\mathcal{ST}[A, B]$ and $\Phi : [0, 1]\to [0, \infty)$ be a continuous increasing function. Then 
	\begin{align*}
		|\omega(z)|+\sum_{n=2}^{\infty}|a_n||\omega(z)|^n+\Phi(|\omega(z)|)\sum_{n=2}^{\infty}|a_n|^2|\omega(z)|^{2n}\leq d(0, \partial f(\mathbb{D}))
	\end{align*}
	for $|z|=r\leq R^{1,{\Phi(|\omega(z)|)}}_{A, B}$, where $R^{1,{\Phi(|\omega(z)|)}}_{A, B}$ is the root in $(0, 1)$ of the equations $	\Psi_1(r)=0$ and $	\Psi_2(r)=0$, 
	\begin{align*}
		\Psi_1(r):&=r+\sum_{n=2}^{\infty}\left(\prod_{k=0}^{n-2}\frac{|(B-A)+Bk|}{k+1}\right)r^n+\Phi(r)\sum_{n=2}^{\infty}\left(\prod_{k=0}^{n-2}\frac{|(B-A)+Bk|}{k+1}\right)^2r^{2n}\\&\quad-(1-B)^{(A-B)/B},\; B\neq 0
	\end{align*}
	and 
	\begin{align}\label{Eq-2.5}
		\Psi_2(r):=re^{Ar}+\Phi(r)r^2\left(\mathcal{J}_0(2Ar)-1\right)-e^{-A},\; B=0.
	\end{align}
	Each radius $R^{1,{\Phi(|\omega(z)|)}}_{A, B}$ is sharp. 
\end{thm}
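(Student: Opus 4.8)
The plan is to combine the coefficient estimates of Lemma A with the distortion estimate of Lemma B, bounding the left-hand side by an increasing function of $r$ whose threshold is exactly the stated distance. Here $\omega$ denotes a Schwarz function (so $\omega(0)=0$, $\omega:\mathbb{D}\to\mathbb{D}$), and the Schwarz lemma gives $|\omega(z)|\le|z|=r$. I would first identify the right-hand side. For $B\neq0$, Lemma B gives $|f(re^{i\theta})|\ge l_{(-A,-B)}(r)=r(1-Br)^{(A-B)/B}$ for every $f\in\mathcal{ST}[A,B]$; letting $r\to1^-$ and using that $f$ is univalent and starlike (so the distance from $0$ to $\partial f(\mathbb{D})$ is the minimal boundary modulus) yields
\begin{align*}
d(0,\partial f(\mathbb{D}))\ge (1-B)^{(A-B)/B},
\end{align*}
with equality for $f_0(z)=z(1+Bz)^{(A-B)/B}$, attained as $z\to-1$. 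For $B=0$ the same argument with $l_{(-A,0)}(r)=re^{-Ar}$ gives $d(0,\partial f(\mathbb{D}))\ge e^{-A}$. These are exactly the constants subtracted in $\Psi_1$ and $\Psi_2$.

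Next, set $A_n:=\prod_{k=0}^{n-2}\frac{|(B-A)+Bk|}{k+1}$, so that Lemma A reads $|a_n|\le A_n$. Since $\Phi\ge0$ and all terms are nonnegative, replacing $|a_n|$ by $A_n$ and $|\omega(z)|$ by $r$ gives
\begin{align*}
|\omega(z)|+\sum_{n=2}^{\infty}|a_n||\omega(z)|^n+\Phi(|\omega(z)|)\sum_{n=2}^{\infty}|a_n|^2|\omega(z)|^{2n}\le G(r),
\end{align*}
where $G(r):=r+\sum_{n=2}^{\infty}A_n r^n+\Phi(r)\sum_{n=2}^{\infty}A_n^2 r^{2n}$. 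Because $\Phi$ is increasing and nonnegative and both power series have nonnegative coefficients, $G$ is continuous and strictly increasing on $[0,1)$, and $G(r)-(1-B)^{(A-B)/B}=\Psi_1(r)$ when $B\ne0$. For $B=0$ one computes $A_n=A^{n-1}/(n-1)!$, whence $r+\sum_{n\ge2}A_n r^n=re^{Ar}$ and, comparing with \eqref{Eq-2.6}, $\sum_{n\ge2}A_n^2 r^{2n}=r^2\big(\mathcal{J}_0(2Ar)-1\big)$; thus $G(r)-e^{-A}=\Psi_2(r)$. This is precisely where the Bessel function enters.

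Since $\Psi_i(0)<0$ and $\Psi_i(r)$ becomes positive as $r\to1^-$ (the series governing the growth of $f_0$ blows up), monotonicity of $G$ yields a unique root $R:=R^{1,\Phi}_{A,B}\in(0,1)$ at which $G$ equals the relevant constant. Hence for $|z|=r\le R$,
\begin{align*}
|\omega(z)|+\sum_{n=2}^{\infty}|a_n||\omega(z)|^n+\Phi(|\omega(z)|)\sum_{n=2}^{\infty}|a_n|^2|\omega(z)|^{2n}\le G(|\omega(z)|)\le G(r)\le G(R)=(1-B)^{(A-B)/B}\le d(0,\partial f(\mathbb{D})),
\end{align*}
which is the asserted inequality (with $e^{-A}$ in the case $B=0$).

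For sharpness I would take $f=f_0$, whose associated Schwarz function is $\omega(z)=z$ since $zf_0'(z)/f_0(z)=(1+Az)/(1+Bz)$, and evaluate at $z=r\in(0,1)$. A binomial expansion shows the coefficients of $f_0$ satisfy $|a_n|=\prod_{j=0}^{n-2}\frac{|A-B-jB|}{j+1}=A_n$ for every $n$, so with $|\omega(r)|=r$ all the coefficient estimates become equalities and the left-hand side equals $G(r)$. Thus at $r=R$ equality holds, while for $r>R$ strict monotonicity of $G$ forces the left-hand side to exceed $d(0,\partial f_0(\mathbb{D}))$, so the inequality fails; this proves $R$ is best possible. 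The main obstacle is precisely the simultaneity: one must verify that the single extremal $f_0$ realizes both the sharp coefficient moduli $|a_n|=A_n$ and the extremal distance $(1-B)^{(A-B)/B}$ at once, since otherwise the radius $R$ would not be optimal.
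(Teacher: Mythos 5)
Your proposal is correct and follows essentially the same route as the paper: bound the left-hand side via the coefficient estimates of Lemma A and the Schwarz-lemma inequality $|\omega(z)|\le r$ together with the monotonicity of $\Phi$, identify the $B=0$ series with $re^{Ar}$ and $r^2(\mathcal{J}_0(2Ar)-1)$, compare against $l_{(-A,-B)}(1)\le d(0,\partial f(\mathbb{D}))$ from Lemma B, and establish sharpness with $f=l_{(A,B)}$ and $\omega(z)=z$. Your added remarks on the strict monotonicity of $G$ and on the need for a single extremal function to saturate both the coefficient and distance bounds simultaneously are correct refinements of the same argument, not a different approach.
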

\begin{table}[ht]
\centering
\begin{tabular}{|l|l|l|l|l|l|l|}
		\hline
		$A$& $0.1 $&$0.3 $& $0.5$& $0.8$& $0.9$&$1 $\\
		\hline
		$R^{1,\omega(z)}_{A, B}$& $0.829$&$0.610 $& $0.473$& $0.340$& $0.306 $&$0.277$\\
		\hline
		$ R^{1,e^\omega(z)}_{A, B}$& $0.823$&$0.0.6 $& $0.465$& $0.334$& $0.302 $&$0.273 $\\
		\hline
		$ R^{1,\sin(\omega(z))}_{A, B}$& $0.829$&$0.610 $& $0.473$& $0.340$& $0.306 $&$0.277 $ \\
		\hline
		$R^{1,\log(1+\omega(z))}_{A, B}$& $0.830$&$0.611 $& $0.474$& $0.341$& $0.306 $&$0.277$ \\
		\hline
		$ R^{1,\frac{1}{2}+\frac{\omega(z)}{1-\omega(z)}}_{A, B}$& $0.814$&$0.599 $&  $0.467$& $0.336$& $0.304$& $0.275$\\
		\hline
\end{tabular}\vspace{2.5mm}
	\caption{The table exhibits the roots $R^{1,\Phi(|\omega(z)|)}_{A, B}$ of the equation \eqref{Eq-2.5} for different choice of $\Phi(\omega(z))=\omega(z)$, $ e^{\omega(z)} $, $\sin(\omega(z))$, $\log(1+\omega(z))$ and $\frac{1}{2}+\frac{\omega(z)}{1-\omega(z)}$ with $\omega(z)=z$ for different values of $A$.}
\end{table}
In particular, if $\Phi(|\omega(z)|)=0$, then we get the following corollary.
\begin{cor}\label{Cor-2.5}
	Let $f(z)=z+\sum_{n=2}^{\infty}a_nz^n$ be in the class $\mathcal{ST}[A, B]$. Then 
	\begin{align*}
		|\omega(z)|+\sum_{n=2}^{\infty}|a_n||\omega(z)|^n\leq d(0, \partial f(\mathbb{D}))
	\end{align*}
	for $|z|=r\leq R^{1,0}_{A, B}$, where $R^{1,0}_{A, B}$ is the root in $(0, 1)$ of the equations 
	\begin{align*}
		r+\sum_{n=2}^{\infty}\left(\prod_{k=0}^{n-2}\frac{|(B-A)+Bk|}{k+1}\right)r^n-(1-B)^{(A-B)/B}=0,\; \emph{when}\; B\neq 0
	\end{align*}
	and 
	\begin{align*}
		re^{Ar}-e^{-A}=0,\; \emph{when}\; B=0.
	\end{align*}
	Each radius $R^{1,0}_{A, B}$ is sharp. 
\end{cor}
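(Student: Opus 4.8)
The plan is to treat the two regimes $B\neq 0$ and $B=0$ in parallel, and in each to first identify the target quantity $d(0,\partial f(\mathbb{D}))$, then dominate the left-hand side by the corresponding $\Psi_j$, and finally produce an extremal pair realizing equality. Since $f\in\mathcal{ST}[A,B]$ is starlike and univalent with $f(0)=0$, the image $f(\mathbb{D})$ is starlike with respect to the origin, so $d(0,\partial f(\mathbb{D}))=\inf\{|w|:w\in\partial f(\mathbb{D})\}$. Every boundary point is a limit $w=\lim_k f(z_k)$ with $|z_k|\to 1$ (otherwise openness of $f$ would place $w$ inside the image), and Lemma B gives $|f(z_k)|\geq l_{(-A,-B)}(|z_k|)$. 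Letting $|z_k|\to 1$ and using continuity of $l_{(-A,-B)}$ at $1$ yields
\[
d(0,\partial f(\mathbb{D}))\;\geq\; l_{(-A,-B)}(1)=
\begin{cases}
(1-B)^{(A-B)/B}, & B\neq 0,\\
e^{-A}, & B=0,
\end{cases}
\]
which is exactly the constant subtracted inside $\Psi_1$ and $\Psi_2$.

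Next I would bound the left-hand side. By the Schwarz lemma $|\omega(z)|\leq|z|=r$, and as $\Phi$ is nonnegative and increasing, $\Phi(|\omega(z)|)\leq\Phi(r)$. Writing $A_n:=\prod_{k=0}^{n-2}\frac{|(B-A)+Bk|}{k+1}$, Lemma A gives $|a_n|\leq A_n$, so every term is dominated by its value with $|\omega(z)|$ replaced by $r$ and $|a_n|$ by $A_n$:
\[
|\omega(z)|+\sum_{n=2}^{\infty}|a_n||\omega(z)|^n+\Phi(|\omega(z)|)\sum_{n=2}^{\infty}|a_n|^2|\omega(z)|^{2n}\;\leq\; r+\sum_{n=2}^{\infty}A_n r^n+\Phi(r)\sum_{n=2}^{\infty}A_n^2 r^{2n}.
\]
For $B\neq 0$ the right side is precisely $\Psi_1(r)+(1-B)^{(A-B)/B}$. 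For $B=0$ one has $A_n=A^{n-1}/(n-1)!$, whence $r+\sum_{n\geq 2}A_n r^n=re^{Ar}$, and using the expansion $\mathcal{J}_0(2Ar)=\sum_{n\geq 0}(Ar)^{2n}/(n!)^2$ from \eqref{Eq-2.6} one gets $\sum_{n\geq 2}A_n^2 r^{2n}=r^2(\mathcal{J}_0(2Ar)-1)$, so the bound equals $\Psi_2(r)+e^{-A}$. Combining with the distance estimate, the desired inequality reduces to $\Psi_1(r)\leq 0$ (resp.\ $\Psi_2(r)\leq 0$).

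Each of $\Psi_1,\Psi_2$ is a sum of nonnegative nondecreasing functions of $r$ minus a positive constant, hence strictly increasing with $\Psi_j(0)=-(1-B)^{(A-B)/B}<0$ (resp.\ $-e^{-A}<0$); thus $\Psi_j$ has the unique zero $R^{1,\Phi(|\omega(z)|)}_{A,B}\in(0,1)$ and $\Psi_j(r)\leq 0$ for $r\leq R^{1,\Phi(|\omega(z)|)}_{A,B}$, which proves the inequality. For sharpness I would take the Koebe-type function $f(z)=z(1+Bz)^{(A-B)/B}$ (resp.\ $ze^{Az}$) together with the Schwarz function $\omega(z)=z$. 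A direct binomial expansion gives $|a_n|=A_n$, and the computation above shows its boundary distance equals $(1-B)^{(A-B)/B}$ (resp.\ $e^{-A}$), while $|\omega(z)|=r$. At the point $z=r$ every estimate of the previous paragraph becomes an equality, so the left side equals $\Psi_j(r)+d(0,\partial f(\mathbb{D}))$, which strictly exceeds $d(0,\partial f(\mathbb{D}))$ as soon as $r>R^{1,\Phi(|\omega(z)|)}_{A,B}$; hence the radius cannot be enlarged.

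The routine estimates being monotonicity bookkeeping, the real content lies in the sharpness alignment: a \emph{single} function $z(1+Bz)^{(A-B)/B}$ must simultaneously saturate the coefficient bound of Lemma A and the \emph{lower} growth bound of Lemma B (so that its distance to $\partial f(\mathbb{D})$ is exactly the constant appearing in $\Psi_1$), while the choice $\omega(z)=z$ turns the Schwarz inequality into equality. I expect verifying this triple alignment — rather than any individual estimate — to be the main obstacle, together with the series rearrangement that produces $\mathcal{J}_0$ in the case $B=0$.
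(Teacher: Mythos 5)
Your proposal is correct and follows essentially the same route as the paper: the corollary is the $\Phi\equiv 0$ case of Theorem \ref{Th-2.2}, whose proof likewise combines the coefficient bound of Lemma A, the lower growth bound of Lemma B to get $d(0,\partial f(\mathbb{D}))\geq l_{(-A,-B)}(1)$, the Schwarz inequality $|\omega(z)|\leq r$, the series identity producing $\mathcal{J}_0$ when $B=0$, and sharpness via $f=l_{(A,B)}$ with $\omega(z)=z$. Your added justification that boundary points arise as limits $f(z_k)$ with $|z_k|\to 1$ is a slightly more careful version of the paper's distance estimate, but not a different argument.
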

\begin{rem}
	Corollary \ref{Cor-2.5} improves the result \cite[Theorem 1]{Anand-Jain-Kumar-BMMSS-2021} involving Schwartz functions. The radius $R^{1,0}_{A, B}$ is exactly the same as the radius $r^*$ mentioned in \cite[Theorem 1]{Anand-Jain-Kumar-BMMSS-2021}. Moreover, when $\Phi(|\omega(z)|)\neq0$, we obtain an improvement.
\end{rem}
\begin{rem}
 In particular, if $\omega(z)=z$, then Theorem \ref{Th-2.2} is an improvement over the earlier result from \cite[Theorem 1]{Anand-Jain-Kumar-BMMSS-2021} in perspective of a general version of positive refinement term $\Phi(z)$.
\end{rem}
\subsection{\bf Improved Bohr inequalities for Janowski starlike functions}
Previous and recent studies into Bohr inequalities for analytic functions, where the term $|f(z)|^p$ was examined with $p$ values of $1$ and $2$, have been expanded by Ponnusamy \emph{et al.} \cite{Alkhaleefah-Kayumov-Ponnusamy-PAMS-2019}, the authors established that $p$ can be in the range $(0,2]$. However, for harmonic functions, $p$ can be any natural number as discussed in \cite{Ahamed-CVEE-2022}.\vspace{1.2mm} 

In view of this observation, it is natural to raise the following question.
\begin{ques}\label{Q-1}
	Can we improve the Bohr inequality \cite[Theorem 1]{Anand-Jain-Kumar-BMMSS-2021} for the class of Janowski starlike functions $\mathcal{S}[A,B]$ by including the term $|f(z)|^p$ for all $p \geq 0$?
\end{ques}
Our next result is answering the Question \ref{Q-1}  for improved Bohr inequalities for the class $\mathcal{ST}[A,B]$ including the term $|f(z)|^p$ for $p\geq0$  and involving Schwartz function.
\begin{thm}\label{Th-2.1}
	Let $f(z)=z+\sum_{n=2}^{\infty}a_nz^n$ be in the class $\mathcal{ST}[A, B]$. Then, for $p \geq0$ 
	\begin{align*}
		|f(re^{i\theta})|^p+|\omega(z)|+\sum_{n=2}^{\infty}|a_n(\omega(z))^n|\leq d(0, \partial f(\mathbb{D}))
	\end{align*}
	for $|z|=r\leq R^p_{A, B}$, where $R^p_{A, B}\in (0, 1)$ is the root of the equations $\Psi_3(r)=0$ and $\Psi_4(r)=0$, where
	\begin{align*}
		\Psi_3(r):=\displaystyle&\left(r(1+Br)^{(A-B)/B}\right)^p\\&+r+\sum_{n=2}^{\infty}\left(\prod_{k=0}^{n-2}\frac{|(B-A)+Bk|}{k+1}\right)r^n-(1-B)^{(A-B)/B},\; B\neq 0
	\end{align*}
	and 
	\begin{align}\label{Eqn-2.4}
		\Psi_4(r):= r^pe^{Apr}+re^{Ar}-e^{-A},\; B=0.
	\end{align}
	Each radius $R^p_{A, B}$ is sharp.
\end{thm}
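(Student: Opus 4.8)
The plan is to reduce the asserted inequality to the single sign condition $\Psi_3(r)\le 0$ (respectively $\Psi_4(r)\le 0$ when $B=0$) by estimating the three pieces of the left-hand side separately and by bounding the right-hand side $d(0,\partial f(\mathbb{D}))$ from below. Since $f\in\mathcal{ST}[A,B]$ maps $\mathbb{D}$ onto a domain starlike with respect to the origin, the Euclidean distance from $0$ to $\partial f(\mathbb{D})$ coincides with $\liminf_{|z|\to 1}|f(z)|$; applying the lower distortion bound in Lemma B and letting $r\to 1$ gives $d(0,\partial f(\mathbb{D}))\ge l_{(-A,-B)}(1)=(1-B)^{(A-B)/B}$ when $B\neq 0$, and $\ge e^{-A}$ when $B=0$. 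It therefore suffices to bound the left-hand side above by this constant.

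For the three terms I would proceed as follows. First, the upper distortion bound of Lemma B gives $|f(re^{i\theta})|^p\le l_{(A,B)}(r)^p=\big(r(1+Br)^{(A-B)/B}\big)^p$, and $\big(re^{Ar}\big)^p=r^pe^{Apr}$ when $B=0$. Second, because $\omega$ is a Schwarz function, the Schwarz lemma yields $|\omega(z)|\le|z|=r$. Third, combining the sharp coefficient bounds of Lemma A with $|\omega(z)|\le r$ gives $\sum_{n=2}^{\infty}|a_n||\omega(z)|^n\le\sum_{n=2}^{\infty}\big(\prod_{k=0}^{n-2}\tfrac{|(B-A)+Bk|}{k+1}\big)r^n$. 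Summing the three estimates shows the left-hand side is at most $\Psi_3(r)+(1-B)^{(A-B)/B}$. In the case $B=0$ the coefficient product collapses to $A^{n-1}/(n-1)!$, so the series sums in closed form to $r(e^{Ar}-1)$, whence $r+r(e^{Ar}-1)=re^{Ar}$, producing exactly $\Psi_4(r)+e^{-A}$.

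It then remains to locate the threshold radius. For $p>0$ one has $\Psi_3(0)=-(1-B)^{(A-B)/B}<0$, while as $r\to 1^-$ the coefficient series (for instance $\sum n r^n$ in the Koebe case) forces $\Psi_3(r)\to+\infty$; since every summand together with the distortion term is strictly increasing in $r$, the function $\Psi_3$ is strictly increasing and has a unique zero $R^p_{A,B}\in(0,1)$, and the identical argument applies to $\Psi_4$. Consequently $\Psi_3(r)\le 0$ for all $r\le R^p_{A,B}$, which combined with the previous two paragraphs yields the desired inequality.

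Finally, for sharpness I would test the extremal Janowski function $f_0(z)=z(1+Bz)^{(A-B)/B}$ (respectively $f_0(z)=ze^{Az}$) together with $\omega_0(z)=z$. This choice saturates all three estimates simultaneously at $z=R^p_{A,B}$: the distortion bound is attained since $f_0(r)=l_{(A,B)}(r)$ for real $r>0$, the Schwarz estimate becomes an equality since $|\omega_0(z)|=r$, the coefficients of $f_0$ meet the bound $|a_n|=\prod_{k=0}^{n-2}\tfrac{|(B-A)+Bk|}{k+1}$, and moreover $d(0,\partial f_0(\mathbb{D}))=(1-B)^{(A-B)/B}$. Hence equality holds at the root, so the radius cannot be enlarged. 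The principal obstacle I anticipate is not any individual estimate, each of which is a direct consequence of Lemma A, Lemma B, and the Schwarz lemma, but rather the combination of correctly identifying $d(0,\partial f(\mathbb{D}))$ through starlikeness and verifying that one and the same extremal function realizes the distortion, Schwarz, and coefficient estimates at once, which is precisely what makes the root $R^p_{A,B}$ genuinely sharp.
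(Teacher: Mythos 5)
Your proposal is correct and follows essentially the same route as the paper: bound $d(0,\partial f(\mathbb{D}))$ from below by $l_{(-A,-B)}(1)$ via Lemma B, estimate the three terms on the left using Lemma B, the Schwarz lemma, and Lemma A respectively, and verify sharpness with the extremal function $l_{(A,B)}$ together with $\omega(z)=z$. The only additions beyond the paper's argument are your explicit verification that $\Psi_3$ (resp.\ $\Psi_4$) is increasing with a sign change on $(0,1)$, which the paper leaves implicit.
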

\begin{rem}
	In particular when $\omega(z)=z$, then we see that Theorem \ref{Th-2.1} improves the result \cite[Theorem 1]{Anand-Jain-Kumar-BMMSS-2021}.
\end{rem}
\begin{table}[ht]
	\centering
	\begin{tabular}{|l|l|l|l|l|l|l|l|l|}
		\hline
		$p$& $R^p_{0.1} $&$R^p_{0.3} $& $R^p_{0.4}$& $R^p_{0.5}$& $R^p_{0.7}$&$R^p_{0.8}  $&$R^p_{0.9}$ &$R^p_{1}$ \\
		\hline
		$1$& $0.433$&$0.334 $& $0.297$& $0.265$& $0.213 $&$0.192$& $0.173$& $0.157$\\
		\hline
		$ 3$& $0.603$&$0.486 $& $0.440$& $0.400$& $0.333 $&$0.305 $& $0.279$& $0.256$\\
		\hline
		$ 7$& $0.707$&$0.569 $& $0.513$& $0.462$& $0.376 $&$0.339 $& $0.307$& $0.277$\\
		\hline
		$10$& $0.741$&$0.593 $& $0.530$& $0.473$& $0.379 $&$0.341 $& $0.308$& $0.278$\\
		\hline
		$ 25$& $0.804$&$0.615 $& $0.540$& $0.477$& $0.380 $&$0.420 $& $0.309$& $0.279$\\
		\hline
	\end{tabular}\vspace{2.5mm}
	\caption{The table exhibits the roots $R^p_{A,0}=R^p_A$(say) of the equation \eqref{Eqn-2.4} for different values of $A(0<A\leq1)$ and $p$.}
	\label{T-6}
\end{table}
\subsection{\bf Bohr inequality involving $S_r/\pi$ for the class $\mathcal{ST}[A, B]$}
 Let $ f $ be holomorphic in $ \mathbb{D} $, and for $ 0<r<1 $, let $ \mathbb{D}_r=\{z\in\mathbb{C} : |z|<r\} $. Throughout the paper, $ S_r:=S_r(f) $ denoted the planar integral 
 \begin{align*}
 	S_r:=\int_{\mathbb{D}_r}|f^{\prime}(z)|^2dA(z).
 \end{align*}
 Note that if $ f(z)=\sum_{n=0}^{\infty}a_nz^n $, then $ S_r:=\pi\sum_{n=1}^{\infty}n|a_n|^2r^{2n} $ (see \cite{Duren-1983}). It is well-known that if $ f $ is a univalent function, then  $ S_r $ is the area of the image of the sub-disk $ \mathbb{D}_r:=\{z\in\mathbb{D} : |z|<r\} $ under the mapping $ f $ (see \cite{Duren-1983}). The quantity $ S_r $ is non-negative and has been used extensively to study the improved versions of Bohr inequality and Bohr radius for the class of analytic functions (see e.g. \cite{Aha-Aha-CMFT-2023,Ismagilov-2020-JMAA,Kayumov-CRACAD-2018,Kayumov-Khammatova-JMAA-2021}) as well as harmonic mappings (see e.g. \cite{Ahamed-Allu-Halder-AMP-2021,Ahamed-CVEE-2021}).
Since 
\begin{align*}
	\mathcal{J}_0(x)=\sum_{n=0}^{\infty}\frac{\left(\frac{x}{2}\right)^{2n}}{(n!)^2}\; \mbox{and}\; \mathcal{J}^{\prime}_0(x)=\frac{1}{x}\sum_{n=0}^{\infty}n\frac{\left(\frac{x}{2}\right)^{2n}}{(n!)^2}
\end{align*}
it follows that 
\begin{align*}
	\mathcal{J}_0(2Ar)=\sum_{m=0}^{\infty}\frac{A^{2m}r^{2m}}{(m!)^2}\; \mbox{and}\; Ar\mathcal{J}^{\prime}_0(2Ar)=\sum_{m=0}^{\infty}m\frac{A^{2m}r^{2m}}{(m!)^2}.
\end{align*}
Hence,
\begin{align*}
	\sum_{n=2}^{\infty}n\left(\prod_{k=0}^{n-2}\frac{A}{k+1}\right)^2r^{2n}&=\sum_{n=2}^{\infty}n\left(\frac{A^{n-1}}{(n-1)!}\right)^2r^{2n}\\&=\sum_{m=1}^{\infty}(m+1)\left(\frac{A^{m}}{m!}\right)^2r^{2m+2}\\&=r^2\left(\sum_{m=1}^{\infty}\frac{A^{2m}r^{2m}}{(m!)^2}+\sum_{m=1}^{\infty}m\frac{A^{2m}r^{2m}}{(m!)^2}\right)\\&=r^2\left(-1+\mathcal{J}_0(2Ar)+Ar\mathcal{J}^{\prime}_0(2Ar)\right).
\end{align*}
For $f\in \mathcal{ST}[A, B]$, a simple computation shows that
\begin{align}\label{Eq-6.1}
	\frac{S_r}{\pi}&=\sum_{n=1}^{\infty}n|a_n|^2r^{2n}\\&\leq\nonumber\begin{cases}
		\displaystyle r^2+\sum_{n=2}^{\infty}n\left(\prod_{k=0}^{n-2}\frac{|(B-A)+Bk|}{k+1}\right)^2r^{2n},\; \mbox{when}\; B\neq 0\vspace{2mm}\\
		\displaystyle r^2+\sum_{n=2}^{\infty}n\left(\prod_{k=0}^{n-2}\frac{A}{k+1}\right)^2r^{2n},\;\;\;\;\;\;\;\;\;\;\;\;\;\;\;\;\;  \mbox{when}\; B=0
	\end{cases}
	\\&=\nonumber\begin{cases}
		\displaystyle r^2+\sum_{n=2}^{\infty}n\left(\prod_{k=0}^{n-2}\frac{|(B-A)+Bk|}{k+1}\right)^2r^{2n},\; \mbox{when}\; B\neq 0\vspace{2mm}\\
		\left(\mathcal{J}_0(2Ar)+Ar\mathcal{J}^{\prime}_0(2Ar)\right)r^2,\;\;\;\;\;\;\;\;\;\;\;\;  \mbox{when}\; B=0.
	\end{cases}
\end{align}
Based on the discussions above, we now present our next result, which pertains to the term $S_r/\pi$ and aims to improve the Bohr inequality with Schwartz functions.
\begin{thm}\label{Th-6.1}
	Let $f(z)=z+\sum_{n=2}^{\infty}a_nz^n$ be in the class $\mathcal{ST}[A, B]$ and $\Phi : [0, 1]\to [0, \infty)$ be a continuous increasing function. Then,  
	\begin{align*}
		|\omega(z)|+\sum_{n=2}^{\infty}|a_n(\omega(z))^n|+\Phi(|\omega(z)|)\left(\frac{S_r}{\pi}\right)\leq d(0, \partial f(\mathbb{D}))
	\end{align*}
	for $|z|=r\leq R^{2,{\Phi(|\omega(z)|)}}_{A, B}$, where $R_{(A, B)}\in (0, 1)$ is the root of the equations $G_1(r)=0$ and $G_2(r)=0$, 
	\begin{align*}
		G_1(r):=\displaystyle&r+\sum_{n=2}^{\infty}\left(\prod_{k=0}^{n-2}\frac{|(B-A)+Bk|}{k+1}\right)r^n+\Phi(r)\sum_{n=1}^{\infty}n\left(\prod_{k=0}^{n-2}\frac{|(B-A)+Bk|}{k+1}\right)^2r^{2n}\\&\quad-(1-B)^{(A-B)/B},\; B\neq 0
	\end{align*}
	and 
	\begin{align}\label{Eqn-2.8}
		G_2(r):=re^{Ar}+\Phi(r)\left(\mathcal{J}_0(2Ar)+Ar\mathcal{J}^{\prime}_0(2Ar)\right)r^2-e^{-A},\; B=0.
	\end{align}
	Each radius $R^{2,{\Phi(|\omega(z)|)}}_{A, B}$ is sharp.
\end{thm}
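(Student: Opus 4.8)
The plan is to handle the cases $B\neq 0$ and $B=0$ in parallel, reducing the claimed inequality to the single scalar condition $G_1(r)\le 0$ (respectively $G_2(r)\le 0$), and then to argue sharpness through the distortion‑extremal function. First I would bound each of the three pieces of the left‑hand side by a quantity depending only on $r=|z|$. Since $\omega$ is a Schwarz function, the Schwarz lemma gives $|\omega(z)|\le|z|=r$; combining this with the coefficient estimate of Lemma A yields
\begin{align*}
|\omega(z)|+\sum_{n=2}^{\infty}|a_n||\omega(z)|^n\le r+\sum_{n=2}^{\infty}\left(\prod_{k=0}^{n-2}\frac{|(B-A)+Bk|}{k+1}\right)r^n.
\end{align*}
Because $\Phi$ is nonnegative and increasing, $\Phi(|\omega(z)|)\le\Phi(r)$, and the estimate \eqref{Eq-6.1} already recorded in the text bounds $S_r/\pi$ by $r^2+\sum_{n\ge2}n(\prod\cdots)^2r^{2n}$, which equals $(\mathcal{J}_0(2Ar)+Ar\mathcal{J}'_0(2Ar))r^2$ when $B=0$. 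I would then identify the right‑hand side: the lower distortion bound of Lemma B gives $|f(z)|\ge l_{(-A,-B)}(|z|)$, so letting $|z|\to1$ produces $d(0,\partial f(\mathbb{D}))=\liminf_{|z|\to1}|f(z)|\ge(1-B)^{(A-B)/B}$ for $B\neq0$, and $\ge e^{-A}$ for $B=0$. Assembling these estimates, the left‑hand side is at most $G_1(r)+(1-B)^{(A-B)/B}$ (resp.\ $G_2(r)+e^{-A}$), so the assertion is equivalent to $G_1(r)\le0$ (resp.\ $G_2(r)\le0$).

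For the range of validity I would verify that $G_1$ is continuous and strictly increasing on $(0,1)$, since each coefficient is nonnegative and $\Phi$ is increasing. At the endpoints, $G_1(0^+)=-(1-B)^{(A-B)/B}<0$, while near $r=1$ the sum of moduli dominates $l_{(A,B)}(r)$, whence $G_1(1^-)\ge(1+B)^{(A-B)/B}-(1-B)^{(A-B)/B}>0$. The intermediate value theorem then furnishes a unique root $R^{2,\Phi(|\omega(z)|)}_{A,B}\in(0,1)$, and strict monotonicity forces $G_1(r)\le0$ exactly for $r\le R^{2,\Phi(|\omega(z)|)}_{A,B}$, which proves the stated inequality on that range. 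The identical argument applies to $G_2$, with $re^{Ar}$ replacing the first two groups of terms.

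For sharpness I would test the distortion‑extremal function $f_0(z)=z(1-Bz)^{(A-B)/B}$ (respectively $f_0(z)=ze^{-Az}$ when $B=0$) together with the Schwarz function $\omega(z)=z$. A direct computation gives $zf_0'(z)/f_0(z)=(1-Az)/(1-Bz)=(1+A(-z))/(1+B(-z))$, which is subordinate to the Janowski function, so $f_0\in\mathcal{ST}[A,B]$; its Taylor coefficients attain the bounds of Lemma A in modulus, and its minimum boundary modulus is $|f_0(e^{i\theta})|=|1-Be^{i\theta}|^{(A-B)/B}$, attained at $\theta=0$, so that $d(0,\partial f_0(\mathbb{D}))=(1-B)^{(A-B)/B}$ (resp.\ $e^{-A}$). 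With these choices every inequality in the first paragraph becomes an equality, so the left‑hand side equals $G_1(r)+(1-B)^{(A-B)/B}$; this coincides with $d(0,\partial f_0(\mathbb{D}))$ exactly at $r=R^{2,\Phi(|\omega(z)|)}_{A,B}$ and strictly exceeds it for $r$ slightly larger, confirming that the radius is best possible.

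The hard part will be the sharpness step: I must check that the single function $f_0$ simultaneously lies in $\mathcal{ST}[A,B]$, saturates the moduli of the Lemma A coefficients, and realizes the boundary distance $(1-B)^{(A-B)/B}$. The last of these is the delicate point, as it requires the minimum of $|1-Be^{i\theta}|^{(A-B)/B}$ over $\theta$ to occur at $\theta=0$; whether this corresponds to minimizing or maximizing $|1-Be^{i\theta}|$ switches with the sign of the exponent $(A-B)/B$, so the verification must be carried out carefully across the full admissible range $-1\le B<A\le1$, even though in every case the extremum lands at $\theta=0$.
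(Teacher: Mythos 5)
Your proposal is correct and follows essentially the same route as the paper: bound $|\omega(z)|\le r$ by the Schwarz lemma, apply the coefficient bounds of Lemma A and the estimate \eqref{Eq-6.1} for $S_r/\pi$, compare with $d(0,\partial f(\mathbb{D}))\ge l_{(-A,-B)}(1)$ from Lemma B, and verify sharpness with the extremal function $l_{(A,B)}$ (your $f_0$ is just a rotation of it, with the same coefficient moduli and boundary distance) together with $\omega(z)=z$. Your added monotonicity/IVT discussion of $G_1$, $G_2$ and the careful check that the boundary minimum occurs at $\theta=0$ are details the paper leaves implicit, but they do not change the argument.
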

\begin{table}[ht]
	\centering
	\begin{tabular}{|l|l|l|l|l|l|l|}
		\hline
		$A$& $0.1 $&$0.3 $& $0.5$& $0.8$& $0.9$&$1 $\\
		\hline
		$R^{2,\omega(z)}_{A, B}$& $0.622$&$0.513 $& $0.423$& $0.319$& $0.291 $&$0.265$\\
		\hline
		$ R^{2,e^\omega(z)}_{A, B}$& $0.488$&$0.415 $& $0.351$& $0.273$& $0.252 $&$0.231 $\\
		\hline
		$ R^{2,\sin(\omega(z))}_{A, B}$& $0.629$&$0.516 $& $0.424$& $0.319$& $0.291 $&$0.265$ \\
		\hline
		$R^{2,\log(1+\omega(z))}_{A, B}$& $0.648$&$0.527 $& $0.430$& $0.321$& $0.292 $&$0.266$ \\
		\hline
		$ R^{2,\left(\frac{1}{2}+\frac{\omega(z)}{1-\omega(z)}\right)}_{A, B}$& $0.440$&$0.380 $&  $0.326$& $0.257$& $0.237$& $0.219$\\
		\hline
	\end{tabular}\vspace{2.5mm}
	\caption{The table exhibits the roots $R^{2,\Phi(|\omega(z)|)}_{A, B}$ of the equation \eqref{Eqn-2.8} for different choice of $\Phi(\omega(z))=\omega(z)$, $ e^{\omega(z)} $, $\sin(\omega(z))$, $\log(1+\omega(z))$ and $\frac{1}{2}+\frac{\omega(z)}{1-\omega(z)}$ with $\omega(z)=z$ for different values of $A$.}
\end{table}
\begin{rem}
	In particular when $\Phi(|\omega(z)|)=0$, we get a general version of \cite[Theorem 1]{Anand-Jain-Kumar-BMMSS-2021} involving Schwartz function and when $\Phi(|\omega(z)|)\neq0$, then we get an improved result.
\end{rem}
\begin{rem}
	In particular, if $\omega(z)=z$, then the Theorem \ref{Th-6.1} improves the result \cite[Theorem 1]{Anand-Jain-Kumar-BMMSS-2021}  involving a general positive function $\Phi(r)$.
\end{rem}
\begin{proof}[\bf Proof of the Theorem \ref{Th-2.2}]
		Since $f(z)=z+\sum_{n=2}^{\infty}a_nz^n\in\mathcal{ST}[A, B]$, using the inequality \eqref{Eq-2.2}, we have \begin{align}\label{Eq-2.8}
		d(0, \partial f(\mathbb{D}))=\inf_{z\in\partial f(\mathbb{D})}|f(z)-f(0)|=\inf_{z\in\partial f(\mathbb{D})}|f(z)|\geq l_{(-A, -B)}(1).
	\end{align}
	For $A>0$, in view of \eqref{Eq-2.6}, an easy computation gives that
	\begin{align}\label{Eq-2.7}
		\sum_{n=2}^{\infty}\left(\prod_{k=0}^{n-2}\frac{A}{k+1}\right)^2r^{2n}&=\sum_{n=2}^{\infty}\left(\frac{A^{n-1}}{(n-1)!}\right)^2r^{2n}\\&\nonumber=r^2\sum_{m=1}^{\infty}\left(\frac{A^{m}r^m}{m!}\right)^2=r^2\left(I_0(2Ar)-1\right).
	\end{align}
	Since $\omega : \mathbb{D}\to\mathbb{D}$ be such that $|\omega(z)|\leq |z|=r$, as the function $ \Phi$ is increasing, we have $\Phi(|\omega(z)|)\leq \Phi(r)$. Hence, by the Lemma A and Lemma B, and using \eqref{Eq-2.7}, we have
	\begin{align*}
		|\omega(z)|&+\sum_{n=2}^{\infty}|a_n||\omega(z)|^n+\Phi(|\omega(z)|)\sum_{n=2}^{\infty}|a_n|^2|\omega(z)|^{2n}\\&\leq r+\sum_{n=2}^{\infty}\left(\prod_{k=0}^{n-2}\frac{|(B-A)+Bk|}{k+1}\right)r^n\\&\quad+\Phi(r)\sum_{n=2}^{\infty}\left(\prod_{k=0}^{n-2}\frac{|(B-A)+Bk|}{k+1}\right)^2r^{2n}\\&\leq \begin{cases}
			\displaystyle r+\sum_{n=2}^{\infty}\left(\prod_{k=0}^{n-2}\frac{|(B-A)+Bk|}{k+1}\right)r^n+\Phi(r)\sum_{n=2}^{\infty}\left(\prod_{k=0}^{n-2}\frac{|(B-A)+Bk|}{k+1}\right)^2r^{2n},\; B\neq 0\vspace{2mm}\\
			\displaystyle re^{Ar}+\Phi(r)\sum_{n=2}^{\infty}\left(\prod_{k=0}^{n-2}\frac{A}{k+1}\right)^2r^{2n},\; B=0
		\end{cases}\\&=\begin{cases}
			\displaystyle r+\sum_{n=2}^{\infty}\left(\prod_{k=0}^{n-2}\frac{|(B-A)+Bk|}{k+1}\right)r^n+\Phi(r)\sum_{n=2}^{\infty}\left(\prod_{k=0}^{n-2}\frac{|(B-A)+Bk|}{k+1}\right)^2r^{2n},\; B\neq 0\vspace{2mm}\\
			\displaystyle re^{Ar}+r^2\Phi(r)\left(\mathcal{J}_0(2Ar)-1\right),\; B=0
		\end{cases}
		\\&\leq l_{-A, -B}(1)\\&\leq d(0, \partial f(\mathbb{D}))
	\end{align*}
	for $|z|=r\leq R^{1,{\Phi(|\omega(z)|)}}_{A, B}$, where $R^{1,{\Phi(|\omega(z)|)}}_{A, B}$ as in the statement of the theorem. Thus the inequality is established.\vspace{1.2mm}
	
	In order to show that the radius $ R^{1,{\Phi(|\omega(z)|)}}_{A, B}$, we consider the function $f=l_{(A, B)}$ which is given by \eqref{Eq-2.3}. In view of \eqref{Eq-2.2} in Lemma B, for $f=l_{(A, B)}$, it is easy to see that 
	\begin{align}\label{Eq-2.10}
		d(0, \partial f(\mathbb{D}))=l_{(-A, -B)}(1).
	\end{align}
	Therefore, considering the function $f=l_{(A, B)}$ and $\omega(z)=z$ so that $|z|= R^{1,{\Phi(|\omega(z)|)}}_{A, B} $, in view of \eqref{Eq-2.8} and \eqref{Eq-2.10}, we see that
	\begin{align*}
	|\omega(z)|&+\sum_{n=2}^{\infty}|a_n||\omega(z)|^n+\Phi(|\omega(z)|)\sum_{n=2}^{\infty}|a_n|^2|\omega(z)|^{2n}\\&=\begin{cases}
		\Psi_1\left(R^{1,{\Phi(|\omega(z)|)}}_{A, B}\right)+\left(1+B\right)^{(A-B)/B},\; B\neq 0;\vspace{2mm}\\
		\Psi_2\left(R^{1,{\Phi(|\omega(z)|)}}_{A, B}\right)+e^{-A},\;\;\;\;\;\;\;\;\;\;\;\;\;\;\;\;\;\;\; B=0.
	\end{cases}\\&=\begin{cases}
	\left(1+B\right)^{(A-B)/B},\; B\neq 0;\vspace{2mm}\\e^{-A}\;\;\;\;\;\;\;\;\;\;\;\;\;\;\;\;\;\;\;\;\; B=0
	\end{cases}\\&= l_{-A, -B}(1)\\&= d(0, \partial f(\mathbb{D}))
	\end{align*}
	This shows that the radius $R^{1,\Phi(|\omega(z)|)}_{A, B}$ is sharp. This completes the proof.
\end{proof}
In mathematics, the Gaussian or ordinary hypergeometric function ${}_2F_1(a,b;c;z)$ is a special function represented by the hypergeometric series, that includes many other special functions as specific or limiting cases. It is a solution of a second-order linear ordinary differential equation (ODE).
The hypergeometric function is defined for $|z| <1$ by the power series 
\begin{align*}
	{}_2 F_1(a,b;c;z)=\sum_{n=0}^{\infty}\frac{(a)_n(b)_n}{(c)_n}\frac{z^n}{n!}=1+\frac{ab}{c}\frac{z}{1!}+\frac{a(a+1)b(b+1)}{c(c+1)}\frac{z^2}{2!}+\cdots .
\end{align*}
It is undefined (or infinite) if $c$ equals a nonpositive integer. Here $(q)_n$ is the Pochhammer symbol, which is defined by:
\begin{align*}
	(q)_n=
	\begin{cases}
		1\;\;\;\;\;\;\;\;\;\;\;\;\;\;\;\;\;\;\;\;\;\;\;\;\;\;\;\;\;\;\;\;\;\; n=0\\ q(q+1)\cdots(q+n-1)\;\;n>0
	\end{cases}
\end{align*} 
Bhowmik and Das (see \cite[Theorem 3, p. 1093]{Bhowmik-Das-JMAA-2018}) obtain the Bohr radius from the class $\mathcal{S}^*(\alpha)$, where $\alpha\in [0, 1/2]$. For $0\leq\alpha<1$, $A=1-2\alpha$ and $B=-1(\neq 0)$, Theorem \ref{Th-2.1} gives the sharp improved Bohr radius for the class of starlike functions of order $\alpha$. The following corollary improves the result \cite[Remark 3, p. 7]{Ahuja-Anand-jain-MDPI-2020} which is \cite[Corollary 1]{Anand-Jain-Kumar-BMMSS-2021}. 
\begin{cor}\label{Cor-2.7}
	Let $f(z)=z+\sum_{n=2}^{\infty}a_nz^n$ be in the class $\mathcal{ST}[A, B]$ and $\Phi : [0, 1]\to [0, \infty)$ be a continuous increasing function. Then 
	\begin{align*}
		|\omega(z)|+\sum_{n=2}^{\infty}|a_n||\omega(z)|^n+\Phi(|\omega(z)|)\sum_{n=2}^{\infty}|a_n|^2|\omega(z)|^{2n}\leq d(0, \partial f(\mathbb{D}))
	\end{align*}
	for $|z|=r\leq R^{\Phi(\omega(z)}_{\alpha}(r)$, where $ R^{\Phi(\omega(z)}_{\alpha}(r)$ is the root in $(0, 1)$ of the equations $	\Psi_\alpha(r)=0$, where
	\begin{align*}
		\Psi^{\Phi(\omega(z)}_\alpha(r):&=r(1-r)^{2(-1+\alpha)}+\Phi(r)\left(-1+{}_2 F_1(2-2\alpha,2-2\alpha,1,r^2)\right)r^2-2^{-2(1-\alpha)}.
	\end{align*}
	Each radius $ R^{\Phi(\omega(z)}_{\alpha}(r)$ is sharp. 
\end{cor}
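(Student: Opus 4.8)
The plan is to obtain Corollary \ref{Cor-2.7} as the specialization of Theorem \ref{Th-2.2} to the parameters $A=1-2\alpha$ and $B=-1$, which corresponds to the class $\mathcal{ST}[\alpha]$ of starlike functions of order $\alpha$. First I would substitute these values into the extremal bound $(1-B)^{(A-B)/B}$ appearing in $\Psi_1$: with $B=-1$ we have $1-B=2$ and $(A-B)/B = (1-2\alpha+1)/(-1) = -(2-2\alpha) = -2(1-\alpha)$, so the constant term becomes $2^{-2(1-\alpha)}$, matching the statement. Since $B\neq 0$ here, the relevant equation is $\Psi_1(r)=0$, and the task is to show $\Psi_1$ reduces to $\Psi^{\Phi(\omega(z))}_\alpha$.

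Next I would simplify the two series in $\Psi_1$. For the first-order (linear) series $r+\sum_{n=2}^\infty\left(\prod_{k=0}^{n-2}\frac{|(B-A)+Bk|}{k+1}\right)r^n$, I substitute $|(B-A)+Bk| = |-1-(1-2\alpha)-k| = |{-2+2\alpha-k}| = 2-2\alpha+k$ (valid since $\alpha<1$ and $k\geq 0$), so the product telescopes into a ratio of Pochhammer-type factors. The resulting coefficients are exactly those of $r(1-r)^{-2(1-\alpha)}$, which can be checked against the binomial series for $(1-r)^{-2(1-\alpha)}=\sum_{m=0}^\infty \binom{2(1-\alpha)+m-1}{m} r^m$. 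This identifies the first two summands of $\Psi_1$ with the term $r(1-r)^{2(-1+\alpha)}$ in $\Psi^{\Phi}_\alpha$. For the quadratic series multiplied by $\Phi(r)$, I square the same coefficients and recognize the generating function: $\sum_{n=2}^\infty\left(\prod_{k=0}^{n-2}\frac{2-2\alpha+k}{k+1}\right)^2 r^{2n} = r^2\left(-1+{}_2F_1(2-2\alpha,2-2\alpha;1;r^2)\right)$, since the squared coefficients $\left(\frac{(2-2\alpha)_m}{m!}\right)^2$ are precisely the Gauss hypergeometric coefficients $\frac{(2-2\alpha)_m(2-2\alpha)_m}{(1)_m\, m!}$, and the $-1$ and the index shift account for the missing $m=0$ term and the factor $r^2$.

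Assembling these pieces shows $\Psi_1(r) = \Psi^{\Phi(\omega(z))}_\alpha(r)$, so the root in $(0,1)$ and the sharpness transfer directly from Theorem \ref{Th-2.2}. The main obstacle I anticipate is the bookkeeping in the second step: correctly reindexing the telescoping product $\prod_{k=0}^{n-2}\frac{2-2\alpha+k}{k+1}$ into the closed form $\frac{(2-2\alpha)_{n-1}}{(n-1)!}$, and then matching its square against the hypergeometric series with the right shift $n\mapsto m=n-1$ so that the $r^{2n}$ factors align with the $r^2\cdot(r^2)^m$ expansion. One must also verify that the absolute-value simplification $|(B-A)+Bk|=2-2\alpha+k$ holds for every $k\geq 0$ in the allowed range $\alpha\in[0,1)$, which it does since the quantity inside is negative. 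Once these algebraic identities are in place, no further analysis is needed, as continuity, monotonicity of $\Phi$, and sharpness are all inherited from the parent theorem.
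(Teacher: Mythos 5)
Your proposal is correct and follows essentially the same route as the paper, which presents Corollary \ref{Cor-2.7} as the specialization $A=1-2\alpha$, $B=-1$ of the refined inequality (the paper's text cites Theorem \ref{Th-2.1}, but the form of the inequality shows the parent result is Theorem \ref{Th-2.2}, as you correctly identify). Your algebraic verifications --- $|(B-A)+Bk|=2-2\alpha+k$, the identification $\prod_{k=0}^{n-2}\frac{2-2\alpha+k}{k+1}=\frac{(2-2\alpha)_{n-1}}{(n-1)!}$ with the binomial coefficients of $r(1-r)^{-2(1-\alpha)}$, the match of the squared coefficients with ${}_2F_1(2-2\alpha,2-2\alpha;1;r^2)$, and the constant $2^{-2(1-\alpha)}$ --- are all accurate and supply the details the paper leaves implicit.
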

\begin{rem}
	In particular, for $\alpha=0$, Corollary \ref{Cor-2.7} yields the sharp Bohr radius for the class of starlike functions which is $R_{0}(r)$ as the root of the equation
	\begin{align*}
		4r\left(1+2r-2r^3-r^4+\Phi(r)(4-3r^2+r^4)r^3\right)+(1-r^2)^3=0.
	\end{align*} 
	The radius $R^{\Phi(\omega(z)}_{0}(r)$ is best possible.
	
	Further, Corollary \ref{Cor-2.7} improves the result \cite[Corollary 1]{Anand-Jain-Kumar-BMMSS-2021} when $\alpha=0$. 
\end{rem}
\begin{rem}
	Putting $A=\beta$ and $B=-\beta(\neq 0)$, where $0<\beta\leq 1$ in Theorem \ref{Th-2.1}, we obtain the sharp improved Bohr radius for the class $\mathcal{ST}^{(\beta)}$. 
\end{rem}
\begin{cor}\label{Cor-2.8}
	$\mathcal{ST}[A, B]$ and $\Phi : [0, 1]\to [0, \infty)$ be a continuous increasing function. Then 
	\begin{align*}
		|\omega(z)|+\sum_{n=2}^{\infty}|a_n||\omega(z)|^n+\Phi(|\omega(z)|)\sum_{n=2}^{\infty}|a_n|^2|\omega(z)|^{2n}\leq d(0, \partial f(\mathbb{D}))
	\end{align*}
	for $|z|=r\leq R^{\Phi(\omega(z)}_{\beta,1}(r)$, where $ R^{\Phi(\omega(z)}_{\beta,1}(r)$ is the root in $(0, 1)$ of the equation $F^{\Phi(\omega(z)}_{\beta}(r)=0$, where 
	\begin{align*}
		F^{\Phi(\omega(z)}_{\beta}(r)=&-\beta^6\Phi(r)(1 + \beta)^2r^8+\beta^4 \left(3\Phi(r)+ 6\beta\Phi(r)+ \beta^2 (-1 +i 3\Phi(r))\right) r^6\\&\quad+\beta^4 (1 + \beta)^2 r^5+\beta^2 \left(2 \beta^3 + \beta (2 - 8\Phi(r)) + \beta^2 (7 - 4\Phi(r)) - 4\Phi(r)\right) r^4\\&\quad- \beta (2 + 7 \beta+ 2 \beta^2) r^2- (1 + \beta)^2 r+1.
	\end{align*}
	Each radius $ R^{\Phi(\omega(z))}_{\beta,1}(r)$ is sharp. 
\end{cor}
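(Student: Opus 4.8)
The plan is to obtain Corollary \ref{Cor-2.8} as a direct specialization of Theorem \ref{Th-2.2} (the refined inequality carrying the term $\Phi(|\omega(z)|)\sum_{n\ge 2}|a_n|^2|\omega(z)|^{2n}$) to the Janowski parameters $A=\beta$ and $B=-\beta$, followed by summing the two resulting series in closed form. First I would compute the coefficient product of Lemma A under this substitution: since $(B-A)+Bk=-\beta(k+2)$, one has $|(B-A)+Bk|=\beta(k+2)$, and the product telescopes,
\begin{align*}
	\prod_{k=0}^{n-2}\frac{|(B-A)+Bk|}{k+1}=\beta^{n-1}\prod_{k=0}^{n-2}\frac{k+2}{k+1}=n\beta^{n-1}.
\end{align*}
This is the key simplification that turns the general series appearing in $\Psi_1$ into elementary ones.

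Next I would evaluate the two sums. Using $\sum_{n\ge1}nx^{n-1}=(1-x)^{-2}$ with $x=\beta r$, the linear part collapses to
\begin{align*}
	r+\sum_{n=2}^{\infty}n\beta^{n-1}r^n=\frac{r}{(1-\beta r)^2},
\end{align*}
while using $\sum_{n\ge1}n^2t^n=t(1+t)(1-t)^{-3}$ with $t=\beta^2r^2$ the quadratic refinement part becomes
\begin{align*}
	\sum_{n=2}^{\infty}\left(n\beta^{n-1}\right)^2r^{2n}=\frac{r^2(1+\beta^2r^2)}{(1-\beta^2r^2)^3}-r^2.
\end{align*}
For the right-hand side I would invoke Lemma B, which gives $d(0,\partial f(\mathbb{D}))=l_{(-\beta,\beta)}(1)=(1+\beta)^{-2}$.

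Combining these, the defining equation $\Psi_1(r)=0$ of Theorem \ref{Th-2.2} takes the form
\begin{align*}
	\frac{r}{(1-\beta r)^2}+\Phi(r)\,r^2\left(\frac{1+\beta^2r^2}{(1-\beta^2r^2)^3}-1\right)-\frac{1}{(1+\beta)^2}=0.
\end{align*}
Multiplying through by $(1+\beta)^2(1-\beta^2r^2)^3$ and using the factorization $1-\beta^2r^2=(1-\beta r)(1+\beta r)$ (so that $(1-\beta^2r^2)^3/(1-\beta r)^2=(1-\beta r)(1+\beta r)^3$) clears every denominator and produces a polynomial identity. Expanding $(1-\beta r)(1+\beta r)^3$ and $(1-\beta^2r^2)^3$ and then collecting powers of $r$ should reproduce exactly $F^{\Phi(\omega(z))}_{\beta,1}(r)=0$ up to an overall sign; the degree-$8$ term $-\beta^6(1+\beta)^2\Phi(r)r^8$ arises precisely from the $\beta^6r^6$ contribution inside $(1-\beta^2r^2)^3$ against the $\Phi$-refinement factor. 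I expect this coefficient bookkeeping to be the only laborious step, since the conceptual content (the telescoping product and the two standard power-series sums) is routine.

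Finally, sharpness requires no new argument: it is inherited verbatim from the sharpness assertion of Theorem \ref{Th-2.2}, witnessed by the extremal Janowski function $f=l_{(\beta,-\beta)}$ of \eqref{Eq-2.3}, namely $z(1-\beta z)^{-2}$, together with the choice $\omega(z)=z$, for which every inequality in the chain becomes an equality at $|z|=R^{\Phi(\omega(z))}_{\beta,1}(r)$. The main obstacle is thus purely computational, living entirely in the denominator-clearing and expansion, rather than conceptual.
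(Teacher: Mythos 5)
Your proposal coincides with the paper's (implicit) derivation: Corollary \ref{Cor-2.8} is obtained there simply as the specialization $A=\beta$, $B=-\beta$ of the refined inequality of Theorem \ref{Th-2.2}, and your telescoping of the Lemma A product to $n\beta^{n-1}$, the two closed-form summations, the identification $l_{(-\beta,\beta)}(1)=(1+\beta)^{-2}$, and the denominator-clearing do reproduce the stated polynomial $F^{\Phi}_{\beta}(r)$ exactly (up to an overall sign, and reading the stray ``$i$'' in the paper's $r^6$ coefficient as a misprint for $+3\Phi(r)$). The only discrepancy is editorial: the paper's preceding remark attributes the substitution to Theorem \ref{Th-2.1}, but the presence of the $\Phi$-refinement term shows the parent result is Theorem \ref{Th-2.2}, exactly as you identified.
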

\begin{rem}
	It is important to note that $F_{\beta}(r)$ is a real-valued differentiable function satisfying
	\begin{align*}
		&F^{\Phi(\omega(z)}_{\beta}(0)=1>0\\& F^{\Phi(\omega(z)}_{\beta}(1)=-\beta(1+\beta)^2\left(4+\beta(-4\beta+(4-3\beta^2+\beta^4)\Phi(r))\right)<0
	\end{align*}
	which ensures the existence of the root $ R^{\Phi(\omega(z))}_{\beta,1}(r)$ in $(0, 1)$.  Further, Corollary \ref{Cor-2.8} improves the result \cite[Corollary 2]{Anand-Jain-Kumar-BMMSS-2021}. 
\end{rem}
\begin{rem}
		If $0<\beta\leq 1$, $A=\beta$ and $B=0$, then Theorem \ref{Th-2.1} yields the sharp improved Bohr radius for the class $\mathcal{ST}_{(\beta)}$.
\end{rem}
\begin{cor}\label{Cor-2.9}
	$\mathcal{ST}[A, B]$ and $\Phi : [0, 1]\to [0, \infty)$ be a continuous increasing function. Then 
	\begin{align*}
		|\omega(z)|+\sum_{n=2}^{\infty}|a_n||\omega(z)|^n+\Phi(|\omega(z)|)\sum_{n=2}^{\infty}|a_n|^2|\omega(z)|^{2n}\leq d(0, \partial f(\mathbb{D}))
	\end{align*}
	for $|z|=r\leq  R^{\Phi(\omega(z))}_{\beta,2}(r)$, where $ R_{\beta}(r)$ is the root in $(0, 1)$ of the equation $E_{\beta}(r)=0$, where
	\begin{align*}
	 R^{\Phi(\omega(z))}_{\beta,1}(r)=r-(1-e^{\beta r})r-\Phi(r)(1-\mathcal{J}_0(2\beta r))r^2-e^{-\beta}.
	\end{align*}
	Each radius $  R^{\Phi(\omega(z))}_{\beta,2}(r)$ is sharp.
\end{cor}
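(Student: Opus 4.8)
The plan is to read off Corollary~\ref{Cor-2.9} as the specialization of Theorem~\ref{Th-2.2} to the parameter values $A=\beta$ and $B=0$ with $0<\beta\leq 1$. By item~(e) of the introduction we have $\mathcal{ST}_{(\beta)}=\mathcal{ST}[\beta,0]$, so every $f$ of the stated form lies in the class $\mathcal{ST}[A,B]$ with $A=\beta$, $B=0$, and the hypotheses of Theorem~\ref{Th-2.2} are met verbatim (the refining function $\Phi:[0,1]\to[0,\infty)$ is the same continuous increasing function). Consequently the inequality
\begin{align*}
	|\omega(z)|+\sum_{n=2}^{\infty}|a_n||\omega(z)|^n+\Phi(|\omega(z)|)\sum_{n=2}^{\infty}|a_n|^2|\omega(z)|^{2n}\leq d(0,\partial f(\mathbb{D}))
\end{align*}
holds for $|z|=r$ up to the root of the $B=0$ branch of Theorem~\ref{Th-2.2}, which is $\Psi_2(r)=0$. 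So the only real work is to check that $E_\beta(r)$, the function appearing in the statement, coincides with $\Psi_2(r)$ once $A=\beta$, $B=0$ are inserted.

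The key step is therefore purely algebraic. Substituting $A=\beta$ into \eqref{Eq-2.5} gives $\Psi_2(r)=re^{\beta r}+\Phi(r)r^2\bigl(\mathcal{J}_0(2\beta r)-1\bigr)-e^{-\beta}$, where the passage from the series $\Phi(r)\sum_{n=2}^{\infty}\bigl(\prod_{k=0}^{n-2}\beta/(k+1)\bigr)^2 r^{2n}$ to the closed Bessel form $\Phi(r)r^2(\mathcal{J}_0(2\beta r)-1)$ is exactly the identity \eqref{Eq-2.7}. On the other hand, expanding the defining expression for $E_\beta(r)$ and cancelling the leading $r-r$ terms yields $E_\beta(r)=re^{\beta r}+\Phi(r)\bigl(\mathcal{J}_0(2\beta r)-1\bigr)r^2-e^{-\beta}$, which is precisely $\Psi_2(r)$. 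I would then record that $E_\beta$ has a unique root in $(0,1)$: one has $E_\beta(0)=-e^{-\beta}<0$ and $E_\beta(1)=e^{\beta}-e^{-\beta}+\Phi(1)\bigl(\mathcal{J}_0(2\beta)-1\bigr)>0$ because $\beta>0$ forces $e^{\beta}>e^{-\beta}$ and the Bessel term is nonnegative, so the intermediate value theorem supplies a root; strict monotonicity of $E_\beta$ (the term $re^{\beta r}$ is increasing, and $\Phi(r)r^2(\mathcal{J}_0(2\beta r)-1)$ is a product of nonnegative increasing factors) gives uniqueness. This root is the claimed $R^{\Phi(\omega(z))}_{\beta,2}(r)$.

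Finally, both the inequality and its sharpness are inherited directly from Theorem~\ref{Th-2.2}: the extremal configuration there is $f=l_{(A,B)}$ together with $\omega(z)=z$, which here becomes $f(z)=l_{(\beta,0)}(z)=ze^{\beta z}$ evaluated at $|z|=R^{\Phi(\omega(z))}_{\beta,2}(r)$, so no new sharpness argument is needed. I do not expect a genuine obstacle in this corollary; the delicate point is only bookkeeping, namely confirming the cancellation $E_\beta(r)=\Psi_2(r)$ and invoking \eqref{Eq-2.7} correctly. One caveat worth flagging is the sign convention: the $\mathcal{J}_0$ used throughout the paper is the series $\sum_{m\geq 0}(\,\cdot\,)^{2m}/(m!)^2$ without the alternating factor $(-1)^m$, so $\mathcal{J}_0(2\beta r)\geq 1$, and this nonnegativity is exactly what makes the refinement term genuinely improving and guarantees $E_\beta(1)>0$.
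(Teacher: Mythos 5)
Your proposal is correct and follows essentially the same route as the paper: the corollary is the specialization of Theorem~\ref{Th-2.2} to $A=\beta$, $B=0$, with the expression $E_\beta(r)$ reducing after the cancellation $r-r$ to $\Psi_2(r)=re^{\beta r}+\Phi(r)r^2(\mathcal{J}_0(2\beta r)-1)-e^{-\beta}$, and sharpness inherited from the extremal pair $f=l_{(\beta,0)}(z)=ze^{\beta z}$, $\omega(z)=z$. You also correctly identify Theorem~\ref{Th-2.2} (not Theorem~\ref{Th-2.1}, as the paper's preceding remark mistakenly cites) as the parent result, and your sign-change check $E_\beta(0)=-e^{-\beta}<0$, $E_\beta(1)>0$ matches the paper's own remark following the corollary.
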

\begin{rem}
	It is important to note that $G_{\beta}$ is a real-valued differentiable function satisfying $ E^{\Phi(\omega(z))}_{\beta}(0)=-e^{-\beta}<0$ and 
	\begin{align*}
	 E^{\Phi(\omega(z))}_{\beta}(1)=\frac{-1+e^{2\beta}}{e^\beta}+\Phi(r)\left(-1+\mathcal{J}_0(2\beta)\right)>0\; \mbox{as}\;e^{2\beta}>1,\;\mathcal{J}_0(2\beta)>1\; \mbox{for}\; \beta>0
	\end{align*}
	which confirms the existence of the root $R_{\beta}(p)$ in $(0, 1)$.  Corollary \ref{Cor-2.9} improves the result \cite[Corollary 3]{Anand-Jain-Kumar-BMMSS-2021}. 
\end{rem}
\begin{rem}
	Letting $A=1$ and $B=(1-M)/M$, where $M>1/2$, Theorem \ref{Th-2.1} provides the following result for the class $\mathcal{ST}(M)$.
\end{rem}
\begin{cor}\label{Cor-2.10}
	If $M>1/2$ and $f(z)=z+\sum_{n=2}^{\infty}a_nz^n\in\mathcal{ST}(M)$, for $p\in\mathbb{N}$, 
	\begin{align*}
		|\omega(z)|+\sum_{n=2}^{\infty}|a_n||\omega(z)|^n+\Phi(|\omega(z)|)\sum_{n=2}^{\infty}|a_n|^2|\omega(z)|^{2n}\leq d(0, \partial f(\mathbb{D}))
	\end{align*}
	for $R^{\Phi(\omega(z))}_{M}(r)$, where $R^{\Phi(\omega(z))}_{M}(r)$ is the root in $(0, 1)$ of the equation 
	\begin{align*}
		&r+\sum_{n=2}^{\infty}\left(\prod_{k=0}^{n-2}\left(\frac{|(1-2M)+(1-M)k|}{M(k+1)}\right)\right)r^n+\Phi(r)\sum_{n=2}^{\infty}\left(\prod_{k=0}^{n-2}\left(\frac{|(1-2M)+(1-M)k|}{M(k+1)}\right)\right)^2r^{2n}\\&\quad-\left(2-\frac{1}{M}\right)^{\frac{(1-2M)}{M-1}}=0.
	\end{align*}
	The radius $R^{\Phi(\omega(z))}_{M}(r)$ is sharp.
\end{cor}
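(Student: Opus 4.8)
The plan is to obtain Corollary~\ref{Cor-2.10} as a direct specialization of Theorem~\ref{Th-2.2}, since the inequality and the refinement term $\Phi(|\omega(z)|)\sum_{n=2}^{\infty}|a_n|^2|\omega(z)|^{2n}$ in the corollary are exactly those appearing in that theorem. The only substantive work is to insert the Janowski parameters $A=1$ and $B=(1-M)/M$ that identify $\mathcal{ST}(M)=\mathcal{ST}[A,B]$, and then to check that the defining equation $\Psi_1(r)=0$ of Theorem~\ref{Th-2.2} collapses to the equation displayed in the corollary.

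First I would verify admissibility of the parameters. The hypothesis $M>1/2$ is precisely what guarantees that the pair $(A,B)=(1,(1-M)/M)$ satisfies $-1\le B<A\le 1$: for every $M>0$ one has $(1-M)/M\ge -1$, while $B<1$ is equivalent to $1-M<M$, i.e. $M>1/2$; moreover $B\ne 0$ as long as $M\ne 1$, so the $B\ne 0$ branch of Theorem~\ref{Th-2.2} applies. Hence any $f\in\mathcal{ST}(M)$ satisfies the hypotheses of Theorem~\ref{Th-2.2}, and the inequality together with the existence of a root in $(0,1)$ are inherited immediately.

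Next I would carry out the algebraic reduction of $\Psi_1$ to match the stated equation. The relevant identities are
\begin{align*}
 B-A=\frac{1-2M}{M},\qquad (B-A)+Bk=\frac{(1-2M)+(1-M)k}{M},
\end{align*}
so that each coefficient product becomes $\prod_{k=0}^{n-2}\dfrac{|(1-2M)+(1-M)k|}{M(k+1)}$, exactly the product in the corollary. For the constant term I would compute $1-B=(2M-1)/M=2-\tfrac{1}{M}$ together with $(A-B)/B=(2M-1)/(1-M)=(1-2M)/(M-1)$, whence $(1-B)^{(A-B)/B}=\bigl(2-\tfrac{1}{M}\bigr)^{(1-2M)/(M-1)}$. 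Substituting these three computations into $\Psi_1(r)=0$ produces precisely the equation defining $R^{\Phi(\omega(z))}_{M}(r)$, and the sharpness claim transfers verbatim from Theorem~\ref{Th-2.2} by taking the extremal function $f=l_{(A,B)}$ together with $\omega(z)=z$.

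The entire argument is routine specialization; the only point demanding minor care is the sign bookkeeping in the exponent, where one rewrites $(2M-1)/(1-M)$ as $(1-2M)/(M-1)$ to conform to the stated form. There is no genuine analytic obstacle, as the inequality, the existence of the root in $(0,1)$, and the sharpness are all already established in Theorem~\ref{Th-2.2}.
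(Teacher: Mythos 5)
Your proposal is correct and follows the same route the paper intends: Corollary \ref{Cor-2.10} is a direct specialization with $A=1$, $B=(1-M)/M$, and your parameter checks and the computations $(B-A)+Bk=\bigl((1-2M)+(1-M)k\bigr)/M$, $1-B=2-\tfrac{1}{M}$, $(A-B)/B=(1-2M)/(M-1)$ are exactly what is needed to collapse $\Psi_1(r)=0$ to the stated equation. One small but genuine point in your favour: the form of the inequality (with the refinement term $\Phi(|\omega(z)|)\sum_{n\ge 2}|a_n|^2|\omega(z)|^{2n}$) shows the corollary must come from Theorem \ref{Th-2.2}, as you do it, even though the paper's preceding remark cites Theorem \ref{Th-2.1}; your treatment of the excluded case $M=1$ (where $B=0$ and the displayed exponent degenerates) is also appropriately flagged.
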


\begin{rem}
	Corollary \ref{Cor-2.10} is an improved version of the  result \cite[Corollary 4]{Anand-Jain-Kumar-BMMSS-2021} involving Schwartz function. 
\end{rem}
\begin{proof}[\bf Proof of Theorem \ref{Th-2.1}]
	Since $f(z)=z+\sum_{n=2}^{\infty}a_nz^n\in\mathcal{ST}[A, B]$, using the inequality \eqref{Eq-2.2}, we have \begin{align}\label{Eq-2.4}
		d(0, \partial f(\mathbb{D}))=\inf_{z\in\partial f(\mathbb{D})}|f(z)-f(0)|=\inf_{z\in\partial f(\mathbb{D})}|f(z)|\geq l_{(-A, -B)}(1).
	\end{align}
	\noindent Since $\omega(z)$ is a Schwarz function on $\mathbb{D}$ satisfying $|\omega(z)|\leq |z|$, a simple computation using Lemma A, Lemma B and the inequality \eqref{Eq-2.4} shows that  
	\begin{align*}
		&|f(re^{i\theta})|^p+|\omega(z)|+\sum_{n=2}^{\infty}|a_n(\omega(z))^n|\\&\leq \left(l_{(A, B)}(r)\right)^p+r+\sum_{n=2}^{\infty}\left(\prod_{k=0}^{n-2}\frac{|(B-A)+Bk|}{k+1}r^n\right)\\&\leq \begin{cases}
			\displaystyle\left(r(1+Br)^{(A-B)/B}\right)^p+r+\sum_{n=2}^{\infty}\left(\prod_{k=0}^{n-2}\frac{|(B-A)+Bk|}{k+1}r^n\right),\; B\neq 0,\vspace{2mm}\\
			 r^pe^{Apr}+re^{Ar},\;\;\;\;\;\;\;\;\;\;\;\;\;\;\;\;\;\;\;\;\;\;\;\;\;\;\;\;\;\;\;\;\;\;\;\;\;\;\;\;\;\;\;\;\;\;\;\;\;\;\;\;\;\;\;\;\;\;\;\;\;\; B=0
		\end{cases}
		\\&\leq l_{(-A, -B)}(1)\\&\leq d(0, \partial f(\mathbb{D}))
	\end{align*}
	for $r\leq R^p_{(A, B)}$, where $R^p_{(A, B)}\in (0, 1)$ is the root of the equations $\Psi_1(r)=0$ and $\Psi_2(r)=0$, where $\Psi_{j}$ $(j=3, 4)$ are given in the statement of the theorem. The desired inequality is thus obtained.\vspace{1.2mm}
	
	In order to show that the radius $ R^{2,{\Phi(|\omega(z)|)}}_{A, B}$, we consider the function $f=l_{(A, B)}$ which is given by \eqref{Eq-2.3}.
Therefore, considering the function $f=l_{(A, B)}$ and     
$\omega(z)=z$ so that $|z|= R^{2,{\Phi(|\omega(z)|)}}_{A, B} $, in view of \eqref{Eq-2.8} and \eqref{Eq-2.10}, we see that
\begin{align*}
	&|f(re^{i\theta})|^p+|\omega(z)|+\sum_{n=2}^{\infty}|a_n(\omega(z))^n|\\&=\begin{cases}
		\Psi_3\left(R^{1,{\Phi(|\omega(z)|)}}_{A, B}\right)+\left(1+B\right)^{(A-B)/B},\; B\neq 0;\vspace{2mm}\\
		\Psi_4\left(R^{1,{\Phi(|\omega(z)|)}}_{A, B}\right)+e^{-A},\;\;\;\;\;\;\;\;\;\;\;\;\;\;\;\;\;\;\; B=0.
	\end{cases}\\&=\begin{cases}
		\left(1+B\right)^{(A-B)/B},\; B\neq 0;\vspace{2mm}\\e^{-A}\;\;\;\;\;\;\;\;\;\;\;\;\;\;\;\;\;\;\;\;\; B=0
	\end{cases}\\&= l_{-A, -B}(1)\\&= d(0, \partial f(\mathbb{D}))
\end{align*}
This shows that the radius $R^{2,\Phi(|\omega(z)|)}_{A, B}$ is sharp. This completes the proof.
\end{proof}
Bhowmik and Das (see \cite[Theorem 3, p. 1093]{Bhowmik-Das-JMAA-2018}) obtain the Bohr radius from the class $\mathcal{S}^*(\alpha)$, where $\alpha\in [0, 1/2]$. For $0\leq\alpha<1$, $A=1-2\alpha$ and $B=-1(\neq 0)$, Theorem \ref{Th-2.1} gives the sharp improved Bohr radius for the class of starlike functions of order $\alpha$. The following corollary improves the result \cite[Remark 3, p. 7]{Ahuja-Anand-jain-MDPI-2020} which is \cite[Corollary 1]{Anand-Jain-Kumar-BMMSS-2021}. 
\begin{cor}\label{Cor-2.1}
	If $0\leq\alpha<1$ and $f(z)=z+\sum_{n=2}^{\infty}a_nz^n\in\mathcal{ST}[\alpha]$, for $p\in\mathbb{N}$, 
	\begin{align*}
		|f(re^{i\theta})|^p+|\omega(z)|+\sum_{n=2}^{\infty}|a_n(\omega(z))^n|\leq d(0, \partial f(\mathbb{D}))
	\end{align*}
	for $R_{\alpha}(p)$, where $R_{\alpha}(p)$ is the root of the equation 
	\begin{align*}
		r^p+\left(1-r\right)^{2(1+p)(1-\alpha)}-2^{2(1-\alpha)}r(1-r)^{2(1-\alpha)p}=0.
	\end{align*}
	The radius $R_{\alpha}(p)$ is sharp.
\end{cor}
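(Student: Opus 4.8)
The plan is to derive Corollary~\ref{Cor-2.1} as the specialization of Theorem~\ref{Th-2.1} to the class $\mathcal{ST}[\alpha]$, which by item~(b) of the Introduction corresponds to the Janowski parameters $A=1-2\alpha$ and $B=-1\,(\neq 0)$. Because $B\neq 0$, the governing equation is $\Psi_3(r)=0$, so the whole task reduces to rewriting $\Psi_3$ under this substitution in closed form. First I would isolate the three ingredients of $\Psi_3$: the growth factor $l_{(A,B)}(r)=r(1+Br)^{(A-B)/B}$, the coefficient product $\prod_{k=0}^{n-2}\frac{|(B-A)+Bk|}{k+1}$, and the constant $(1-B)^{(A-B)/B}$.

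Substituting $A=1-2\alpha$, $B=-1$ gives $(A-B)/B=-2(1-\alpha)$ and $1+Br=1-r$, so that $l_{(A,B)}(r)=r(1-r)^{-2(1-\alpha)}$, hence $\big(l_{(A,B)}(r)\big)^{p}=r^{p}(1-r)^{-2(1-\alpha)p}$, while $(1-B)^{(A-B)/B}=2^{-2(1-\alpha)}$. The key computation is the closed form of the middle series. Here $(B-A)+Bk=-\big(2(1-\alpha)+k\big)$, so $\prod_{k=0}^{n-2}\frac{|(B-A)+Bk|}{k+1}=\frac{(2(1-\alpha))_{n-1}}{(n-1)!}$, a ratio of a Pochhammer symbol to a factorial. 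Reindexing by $j=n-1$ and summing the resulting binomial series yields $r+\sum_{n=2}^{\infty}\frac{(2(1-\alpha))_{n-1}}{(n-1)!}\,r^{n}=r\,(1-r)^{-2(1-\alpha)}$, which is the ${}_1F_0$ specialization of the hypergeometric series recalled after the proof of Theorem~\ref{Th-2.2}.

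Feeding these into $\Psi_3$ gives $\Psi_3(r)=r^{p}(1-r)^{-2(1-\alpha)p}+r(1-r)^{-2(1-\alpha)}-2^{-2(1-\alpha)}$, and clearing the fractional powers of $(1-r)$ turns $\Psi_3(r)=0$ into the polynomial-type equation displayed in the statement. Existence of a root $R_{\alpha}(p)\in(0,1)$ then follows from the intermediate value theorem: $\Psi_3(0)=-2^{-2(1-\alpha)}<0$, whereas $\Psi_3(r)\to+\infty$ as $r\to 1^{-}$, since $1-\alpha>0$ forces the negative exponents to blow up; one selects the smallest such root. Sharpness is inherited verbatim from Theorem~\ref{Th-2.1}: choosing the extremal function $f=l_{(1-2\alpha,-1)}$ together with the Schwarz function $\omega(z)=z$ converts every inequality in that proof into an equality at $|z|=R_{\alpha}(p)$.

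The main obstacle is the closed-form evaluation of the coefficient series---recognizing its Pochhammer/binomial structure and summing it to $r(1-r)^{-2(1-\alpha)}$---together with the careful bookkeeping of the fractional exponents of $(1-r)$ when clearing denominators to reach the displayed equation. Once that identity is secured, both the inequality and its sharpness transfer directly from Theorem~\ref{Th-2.1}, and no analytic input beyond Lemma~A, Lemma~B, and the distance estimate $d(0,\partial f(\mathbb{D}))\geq l_{(-A,-B)}(1)$ is required.
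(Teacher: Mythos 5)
Your route is the paper's own: Corollary \ref{Cor-2.1} is given there with no separate proof, only the remark that setting $A=1-2\alpha$, $B=-1$ in Theorem \ref{Th-2.1} yields it, and your specialization of $\Psi_3$ is carried out correctly — $(A-B)/B=-2(1-\alpha)$, the coefficient product equals $(2(1-\alpha))_{n-1}/(n-1)!$, the series sums to $r(1-r)^{-2(1-\alpha)}$ by the binomial series, and $(1-B)^{(A-B)/B}=2^{-2(1-\alpha)}$, giving
\begin{align*}
\Psi_3(r)=\frac{r^p}{(1-r)^{2(1-\alpha)p}}+\frac{r}{(1-r)^{2(1-\alpha)}}-\frac{1}{2^{2(1-\alpha)}}.
\end{align*}

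The gap is in your final sentence of the computation: ``clearing the fractional powers of $(1-r)$'' does \emph{not} turn $\Psi_3(r)=0$ into the displayed equation. Multiplying $\Psi_3(r)=0$ by $2^{2(1-\alpha)}(1-r)^{2(1-\alpha)p}$ gives
\begin{align*}
2^{2(1-\alpha)}r^p+2^{2(1-\alpha)}r(1-r)^{2(1-\alpha)(p-1)}-(1-r)^{2(1-\alpha)p}=0,
\end{align*}
whereas the corollary asserts $r^p+(1-r)^{2(1+p)(1-\alpha)}-2^{2(1-\alpha)}r(1-r)^{2(1-\alpha)p}=0$; the exponents, the placement of the factor $2^{2(1-\alpha)}$, and the relative signs all differ, and the two equations are not equivalent. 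Concretely, for $\alpha=0$ and $p=1$ your (correct) equation reduces to $8r=(1-r)^2$ with root $5-2\sqrt{6}\approx 0.101$, while the stated equation $r+(1-r)^4-4r(1-r)^2=0$ has its root near $0.253$. So either the equation printed in the corollary (apparently copied from the cited source) is mis-stated, or the corollary does not follow from Theorem \ref{Th-2.1} by this substitution; in either case your proof asserts an algebraic identity that fails, and the sharpness claim cannot be ``inherited verbatim'' for the radius defined by the displayed equation, since the extremal computation in Theorem \ref{Th-2.1} produces equality exactly at the root of $\Psi_3$, not at the root of the displayed equation. You should either record the correct specialized equation or explicitly flag the discrepancy rather than assert the match.
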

\begin{rem}
	In particular, for $\alpha=0$, Corollary \ref{Cor-2.1} yields the sharp Bohr radius for the class of starlike functions which is $R_{0}(p)$ as the root of the equation
	\begin{align*}
		\left(1-r\right)^{2p+2}-4r(1-r)^{2p}+r^p=0.
	\end{align*} 
	Further, Corollary \ref{Cor-2.1} improves the result \cite[Corollary 1]{Anand-Jain-Kumar-BMMSS-2021} when $\alpha=0$. 
\end{rem}
\begin{rem}
	Putting $A=\beta$ and $B=-\beta(\neq 0)$, where $0<\beta\leq 1$ in Theorem \ref{Th-2.1}, we get the sharp improved Bohr radius for the class $\mathcal{ST}^{(\beta)}$. 
\end{rem}
\begin{cor}\label{Cor-2.2}
	If $0\leq\alpha<1$ and $f(z)=z+\sum_{n=2}^{\infty}a_nz^n\in\mathcal{ST}^{(\beta)}$, for $p\in\mathbb{N}$, 
	\begin{align*}
		|f(re^{i\theta})|^p+|\omega(z)|+\sum_{n=2}^{\infty}|a_n(\omega(z))^n|\leq d(0, \partial f(\mathbb{D}))
	\end{align*}
	for $R_{\beta}(p)$, where $R_{\alpha}(p)$ is the root in $(0, 1)$ of the equation $F_{\beta, p}(r)=0$, where
	\begin{align*}
		F_{\beta, p}(r):=(1+\beta)^2\left(r\left(1-\beta r\right)^{2(p-1)}+r^p\right)-\left(1-\beta r\right)^{2p}.
	\end{align*}
	The radius $R_{\beta}(p)$ is sharp.
\end{cor}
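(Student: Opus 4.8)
The plan is to derive this corollary directly from Theorem \ref{Th-2.1} by specializing the Janowski parameters. Following the remark preceding the statement, I would set $A=\beta$ and $B=-\beta$, which is legitimate since $0<\beta\leq 1$ forces $B=-\beta\neq 0$, placing us squarely in the $B\neq 0$ branch of the theorem. The task then reduces to showing that the defining equation $\Psi_3(r)=0$ of Theorem \ref{Th-2.1} collapses to $F_{\beta,p}(r)=0$ under this substitution, after which both the inequality and the sharpness claim are inherited from that theorem.

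First I would evaluate the three ingredients of $\Psi_3$ at $A=\beta$, $B=-\beta$. The coefficient product simplifies because $(B-A)+Bk=-\beta(k+2)$, so $\prod_{k=0}^{n-2}\frac{|(B-A)+Bk|}{k+1}=\beta^{n-1}\frac{n!}{(n-1)!}=n\beta^{n-1}$, recovering the classical Robertson coefficient bound for $\mathcal{ST}^{(\beta)}$. Summing the resulting series via $\sum_{n=1}^{\infty}n x^{n}=x/(1-x)^2$ with $x=\beta r$ yields the closed form $r+\sum_{n=2}^{\infty}n\beta^{n-1}r^n=\frac{r}{(1-\beta r)^2}$. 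At the same time $l_{(A,B)}(r)=r(1+Br)^{(A-B)/B}=\frac{r}{(1-\beta r)^2}$ and the distance normalization becomes $(1-B)^{(A-B)/B}=(1+\beta)^{-2}$, so that $\Psi_3(r)=\frac{r^p}{(1-\beta r)^{2p}}+\frac{r}{(1-\beta r)^2}-\frac{1}{(1+\beta)^2}$.

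Clearing denominators by multiplying through by the factor $(1+\beta)^2(1-\beta r)^{2p}$, which is strictly positive on $(0,1)$ since $\beta r<1$ there, produces exactly $F_{\beta,p}(r)=(1+\beta)^2\big(r^p+r(1-\beta r)^{2(p-1)}\big)-(1-\beta r)^{2p}$. Because this clearing factor is positive, $F_{\beta,p}$ and $\Psi_3$ share the same zeros in $(0,1)$, so the root $R_\beta(p)$ of $F_{\beta,p}$ coincides with the radius $R^p_{A,B}$ furnished by Theorem \ref{Th-2.1}, and the Bohr-type inequality holds for all $r\leq R_\beta(p)$. To confirm that such a root exists, I would invoke the intermediate value theorem: $F_{\beta,p}(0)=-1<0$, while the factorization $(1+\beta)^2(1-\beta)^{2(p-1)}-(1-\beta)^{2p}=4\beta(1-\beta)^{2(p-1)}$ gives $F_{\beta,p}(1)=(1+\beta)^2+4\beta(1-\beta)^{2(p-1)}>0$.

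Finally, sharpness is inherited verbatim from the sharpness assertion of Theorem \ref{Th-2.1}: the extremal function is $f=l_{(\beta,-\beta)}$, i.e. the Koebe-type map $z/(1-\beta z)^2$, together with $\omega(z)=z$, for which both sides of the inequality coincide at $r=R_\beta(p)$ by the same computation used to prove Theorem \ref{Th-2.1}. The only steps requiring genuine care are the series summation and the sign-preserving clearing of denominators that identifies the root of $F_{\beta,p}$ with that of $\Psi_3$; everything else is routine bookkeeping inside the already-established framework.
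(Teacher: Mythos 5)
Your proposal is correct and follows exactly the route the paper takes: Corollary \ref{Cor-2.2} is obtained by substituting $A=\beta$, $B=-\beta$ into Theorem \ref{Th-2.1}, simplifying the coefficient product to $n\beta^{n-1}$, summing the series to $r/(1-\beta r)^2$, and clearing the positive denominator $(1+\beta)^2(1-\beta r)^{2p}$ to arrive at $F_{\beta,p}$, with sharpness inherited from the extremal function $l_{(\beta,-\beta)}(z)=z/(1-\beta z)^2$. Your sign check $F_{\beta,p}(0)=-1<0$ and $F_{\beta,p}(1)=(1+\beta)^2+4\beta(1-\beta)^{2(p-1)}>0$ matches the paper's own remark guaranteeing the root in $(0,1)$.
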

\begin{rem}
We note that $F_{\beta, p}$ is a real-valued differentiable function satisfying
	\begin{align*}
		F_{\beta, p}(0)=-1<0\;\; \mbox{and}\; F_{\beta, p}(1)=4\beta(1-\beta)^{2p-2}+(1+\beta)^2>0
	\end{align*}
	which ensures the existence of the root $R_{\beta}(p)$ in $(0, 1)$. Further, Corollary \ref{Cor-2.2} improves the result \cite[Corollary 2]{Anand-Jain-Kumar-BMMSS-2021}. 
\end{rem}
\begin{rem}
	If $0<\beta\leq 1$, $A=\beta$ and $B=0$, then Theorem \ref{Th-2.1} yields the sharp improved Bohr radius for the class $\mathcal{ST}_{(\beta)}$.
\end{rem}
\begin{cor}\label{Cor-2.3}
	If $0<\beta\leq 1$ and $f(z)=z+\sum_{n=2}^{\infty}a_nz^n\in \mathcal{ST}_{(\beta)}$, then for $p\in\mathbb{N}$, 
	\begin{align*}
		|f(re^{i\theta})|^p+|\omega(z)|+\sum_{n=2}^{\infty}|a_n(\omega(z))^n|\leq d(0, \partial f(\mathbb{D}))
	\end{align*}
	for $R_{\beta}(p)$, where $R_{\beta}(p)$ is the root in $(0, 1)$ of the equation $G_{\beta, p}(r)=0$, where 
	\begin{align*}
		G_{\beta, p}(r):=r^pe^{\beta pr}+re^{\beta r}-e^{-\beta}.
	\end{align*}
	The radius $R_{\beta}(p)$ is sharp.
\end{cor}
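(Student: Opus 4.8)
The plan is to obtain Corollary \ref{Cor-2.3} as the $B=0$ specialization of Theorem \ref{Th-2.1}. Since by definition $\mathcal{ST}_{(\beta)}=\mathcal{ST}[\beta,0]$, I would set $A=\beta$ and $B=0$ in Theorem \ref{Th-2.1}; every $p\in\mathbb{N}$ lies in $[0,\infty)$, so the hypotheses are met, and the relevant radius is governed by the equation $\Psi_4(r)=0$ from \eqref{Eqn-2.4}. Substituting $A=\beta$ into $\Psi_4(r)=r^pe^{Apr}+re^{Ar}-e^{-A}$ produces precisely $G_{\beta,p}(r)=r^pe^{\beta pr}+re^{\beta r}-e^{-\beta}$, so the asserted inequality holds for $|z|=r\leq R_\beta(p)$ with $R_\beta(p)$ the root of $G_{\beta,p}(r)=0$.

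First I would unpack the $B=0$ branch in the proof of Theorem \ref{Th-2.1} to confirm the shape of $G_{\beta,p}$. Lemma B gives $l_{(\beta,0)}(r)=re^{\beta r}$, so $|f(re^{i\theta})|^p\leq r^pe^{\beta pr}$, while the sharp coefficient bound of Lemma A collapses to $|a_n|\leq\beta^{n-1}/(n-1)!$; the corresponding sum then telescopes, since $r+\sum_{n\geq2}\tfrac{\beta^{n-1}}{(n-1)!}r^n=r\sum_{m\geq0}\tfrac{(\beta r)^m}{m!}=re^{\beta r}$, and the distance term obeys $d(0,\partial f(\mathbb{D}))\geq l_{(-\beta,0)}(1)=e^{-\beta}$. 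Adding these pieces reproduces the inequality $r^pe^{\beta pr}+re^{\beta r}\leq e^{-\beta}$, that is $G_{\beta,p}(r)\leq0$.

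Next I would verify that $G_{\beta,p}$ has a unique zero in $(0,1)$. Evaluating at the endpoints gives $G_{\beta,p}(0)=-e^{-\beta}<0$ and $G_{\beta,p}(1)=e^{\beta p}+e^{\beta}-e^{-\beta}>0$ for every $\beta>0$ and $p\in\mathbb{N}$, so the intermediate value theorem supplies at least one root. For uniqueness I would differentiate and factor, obtaining $G_{\beta,p}'(r)=(1+\beta r)\left(pr^{p-1}e^{\beta pr}+e^{\beta r}\right)$, which is strictly positive on $(0,1)$; hence $G_{\beta,p}$ is strictly increasing and the root $R_\beta(p)$ is unique.

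Finally, sharpness is inherited from Theorem \ref{Th-2.1}: taking the extremal function $f=l_{(\beta,0)}$, namely $f(z)=ze^{\beta z}$, together with $\omega(z)=z$ and evaluating at $|z|=R_\beta(p)$ turns each of the estimates above into an equality, so the radius cannot be enlarged. Since the substantive work — the coefficient estimates, the growth bounds, and the sharpness construction — is already carried out in Theorem \ref{Th-2.1}, I expect no genuine obstacle here; the only step demanding a little care is confirming that $|\omega(z)|+\sum_{n\geq2}|a_n||\omega(z)|^n$ sums exactly to $re^{\beta r}$ under the $B=0$ coefficient bound, which is precisely what makes the closed form $G_{\beta,p}$ emerge.
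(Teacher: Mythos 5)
Your proposal is correct and follows essentially the same route as the paper: Corollary \ref{Cor-2.3} is obtained by specializing Theorem \ref{Th-2.1} to $A=\beta$, $B=0$, so that $\Psi_4$ becomes $G_{\beta,p}$, with the endpoint check $G_{\beta,p}(0)=-e^{-\beta}<0$ and $G_{\beta,p}(1)=e^{\beta p}+e^{\beta}-e^{-\beta}>0$ guaranteeing a root, and sharpness inherited from the extremal function $f(z)=ze^{\beta z}$ with $\omega(z)=z$. Your additional factorization $G_{\beta,p}'(r)=(1+\beta r)\bigl(pr^{p-1}e^{\beta pr}+e^{\beta r}\bigr)>0$ establishing uniqueness of the root is a small bonus the paper does not spell out.
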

\begin{rem}
Note that $G_{\beta, p}$ is a real-valued differentiable function satisfying $G_{\beta, p}(0)=-e^{-\beta}<0$ and 
	\begin{align*}
		G_{\beta, p}(1)=e^{\beta p}+\frac{\left(e^{\beta}+1\right)\left(e^{\beta}-1\right)}{e^{\beta}}>0\; \mbox{as}\;e^{\beta}>1\; \mbox{for}\; \beta>0
	\end{align*}
	which confirms the existence of the root $R_{\beta}(p)$ in $(0, 1)$.  Corollary \ref{Cor-2.3} improves the result \cite[Corollary 3]{Anand-Jain-Kumar-BMMSS-2021}. 
\end{rem}
\begin{rem}
	Letting $A=1$ and $B=(1-M)/M$, where $M>1/2$, Theorem \ref{Th-2.1} provides the following result for the class $\mathcal{ST}(M)$.
\end{rem}
\begin{cor}\label{Cor-2.4}
	If $M>1/2$ and $f(z)=z+\sum_{n=2}^{\infty}a_nz^n\in\mathcal{ST}(M)$, then for $p\in\mathbb{N}$, 
	\begin{align*}
		|f(re^{i\theta})|^p+|\omega(z)|+\sum_{n=2}^{\infty}|a_n(\omega(z))^n|\leq d(0, \partial f(\mathbb{D}))
	\end{align*}
	for $R_{M}(p)$, where $R_{M}(p)$ is the root in $(0, 1)$ of the equation 
	\begin{align*}
		&\left(1+\frac{1-M}{M}r\right)^{\frac{(1-2M)p}{M-1}}r^p+r+\sum_{n=2}^{\infty}\left(\prod_{k=0}^{n-2}\left(\frac{|(1-2M)+(1-M)k|}{M(k+1)}\right)\right)r^n\\&\quad-\left(2-\frac{1}{M}\right)^{\frac{(1-2M)}{M-1}}=0.
	\end{align*}
	The radius $R_{M}(p)$ is sharp.
\end{cor}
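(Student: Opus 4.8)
The plan is to obtain this statement as a direct specialization of Theorem \ref{Th-2.1}. By item (c) in the introduction, the class $\mathcal{ST}(M)$ with $M>1/2$ is precisely $\mathcal{ST}[A,B]$ for the choice $A=1$ and $B=(1-M)/M$; note that $M>1/2$ forces $2-1/M>0$, while $B\neq 0$ exactly when $M\neq 1$ (the degenerate value $M=1$ gives $B=0$ and must be read off the $\Psi_4$ branch instead). Assuming $M\neq 1$, the entire task reduces to substituting these values of $A$ and $B$ into the equation $\Psi_3(r)=0$ of Theorem \ref{Th-2.1} and checking that it collapses to the displayed equation.

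First I would simplify the three ingredients of $\Psi_3$ separately. The exponent appearing throughout is
\begin{align*}
\frac{A-B}{B}=\frac{1-\tfrac{1-M}{M}}{\tfrac{1-M}{M}}=\frac{2M-1}{1-M}=\frac{1-2M}{M-1},
\end{align*}
so the leading term $\bigl(r(1+Br)^{(A-B)/B}\bigr)^p$ becomes $r^p\bigl(1+\tfrac{1-M}{M}r\bigr)^{(1-2M)p/(M-1)}$, matching the first summand. Next, the coefficient multiplier satisfies
\begin{align*}
\frac{|(B-A)+Bk|}{k+1}=\frac{1}{k+1}\Bigl|\frac{1-2M}{M}+\frac{(1-M)k}{M}\Bigr|=\frac{|(1-2M)+(1-M)k|}{M(k+1)},
\end{align*}
which reproduces the product inside the series. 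Finally the constant is
\begin{align*}
(1-B)^{(A-B)/B}=\Bigl(\frac{2M-1}{M}\Bigr)^{(1-2M)/(M-1)}=\Bigl(2-\frac{1}{M}\Bigr)^{(1-2M)/(M-1)},
\end{align*}
which is exactly the subtracted term. Assembling these shows that $\Psi_3(r)=0$ is identical to the equation in the statement, so the inequality holds for $r\le R_M(p)$ with $R_M(p)$ its root.

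To locate the root in $(0,1)$ I would invoke the intermediate value theorem: at $r=0$ the left-hand side equals $-(2-1/M)^{(1-2M)/(M-1)}<0$, while as $r\to 1^-$ the surviving series and boundary-growth terms make it positive (this is already encoded in Theorem \ref{Th-2.1}, where the constant $(1-B)^{(A-B)/B}=l_{(-A,-B)}(1)$ governs the distance $d(0,\partial f(\mathbb{D}))$), giving a root in $(0,1)$. Sharpness is inherited verbatim from Theorem \ref{Th-2.1}: the extremal function is $f=l_{(A,B)}$ of \eqref{Eq-2.3} with $A=1$, $B=(1-M)/M$, together with $\omega(z)=z$, for which every majorizing inequality used in the proof of Theorem \ref{Th-2.1} becomes an equality at $r=R_M(p)$.

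The only real work is the bookkeeping of the substitution; the expected stumbling point is keeping the signs straight in the exponent $(A-B)/B$ and in the base $1-B$, since $B$ changes sign as $M$ crosses $1$, and separately handling the limiting case $M=1$ through the $B=0$ equation $\Psi_4(r)=r^pe^{pr}+re^{r}-e^{-1}=0$.
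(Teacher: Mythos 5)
Your proposal is correct and coincides with the paper's own treatment: the paper obtains Corollary \ref{Cor-2.4} simply by setting $A=1$ and $B=(1-M)/M$ in Theorem \ref{Th-2.1}, and your substitution of the exponent $(A-B)/B=(1-2M)/(M-1)$, the coefficient multiplier, and the constant $(1-B)^{(A-B)/B}=(2-1/M)^{(1-2M)/(M-1)}$ all check out. Your additional remark that $M=1$ forces $B=0$ and must be routed through the $\Psi_4$ branch is a sound observation that the paper leaves implicit.
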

\begin{rem}
	Corollary \ref{Cor-2.4} improves the result \cite[Corollary 4]{Anand-Jain-Kumar-BMMSS-2021} by adding a non-negative quantity in the Bohr inequality in terms of Schwartz function. 
\end{rem}
\begin{proof}[\bf Proof of Theorem \ref{Th-6.1}] 
	Since $f\in\mathcal{ST}[A, B]$, we have the estimate as in \eqref{Eq-2.4}.	Since $\omega(z)$ is a Schwarz function on $\mathbb{D}$ satisfying $|\omega(z)|\leq |z|=r<1$, an easy computation using Lemma A, Lemma B, \eqref{Eq-2.4} and \eqref{Eq-6.1} shows that  
	\begin{align*}
		&|\omega(z)|+\sum_{n=2}^{\infty}|a_n(\omega(z))^n|+\Phi(|\omega(z)|)\left(\frac{S_r}{\pi}\right)\\&\leq r+\sum_{n=2}^{\infty}\left(\prod_{k=0}^{n-2}\frac{|(B-A)+Bk|}{k+1}\right)r^n+\Phi(r)\left(\frac{S_r}{\pi}\right)\\&\leq \begin{cases}
			\displaystyle r+\sum_{n=2}^{\infty}\left(\prod_{k=0}^{n-2}\frac{|(B-A)+Bk|}{k+1}\right)r^n+\Phi(r)\sum_{n=1}^{\infty}n\left(\prod_{k=0}^{n-2}\frac{|(B-A)+Bk|}{k+1}\right)^2r^{2n},\; B\neq 0,\vspace{2mm}\\
			re^{Ar}+\Phi(r)\left(\mathcal{J}_0(2Ar)+Ar\mathcal{J}^{\prime}_0(2Ar)\right)r^2,\;\;\;\;\;\;\;\;\;\;\;\;\;\;\;\;\;\;\;\;\;\;\;\;\;\;\;\;\;\;\;\;\;\;\;\;\;\;\;\;\;\;\;\;\;\;\;\;\;\;\;\;\;\;\;\; B=0
		\end{cases}
		\\&\leq l_{(-A, -B)}(1)\\&\leq d(0, \partial f(\mathbb{D}))
	\end{align*}
	for $r\leq R^{2,{\Phi(|\omega(z)|)}}_{A, B}$, where $R^{2,{\Phi(|\omega(z)|)}}_{A, B}\in (0, 1)$ is the root of the equations $G_1(r)=0$ and $G_2(r)=0$, where $G_{j}$ $(j=1, 2)$ are given in the statement of the theorem. The desired inequality is thus obtained.\vspace{1.2mm}
	
	The next step of the proof is to show that the radius $R^{2,{\Phi(|\omega(z)|)}}_{A, B}$ is sharp. Hence, we consider the function $l_{(A, B)}$ which is given by \eqref{Eq-2.3}. By the similar method used in Theorem \ref{Th-2.2} and \eqref{Th-2.1}, it is easy to show that the radius $R^{2,{\Phi(|\omega(z)|)}}_{A, B}$ is sharp. Hence, we omit the details.
\end{proof}
\section{\bf The class $\mathcal{M}(\alpha)$ of $\alpha$-convex functions}
In $1969$, Mocanu (see \cite{Mocanu-1969}) introduced the class of $\alpha$-convex functions. For $\alpha\in\mathbb{R}$, a normalized analytic function of the form $f(z)=z+\sum_{n=2}^{\infty}a_nz^n$ is said to be $\alpha$-convex in $\mathbb{D}$ (or simply $\alpha$-convex) if $f$ satisfies 
\begin{align*}
\left(\frac{f(z)}{z}\right)f^{\prime}(z)\neq 0
\end{align*}
 and 
 \begin{align*}
 	{\rm Re}\left(\alpha\left(1+\frac{zf^{\prime\prime}(z)}{f^{\prime}(z)}\right)+(1-\alpha)\frac{zf^{\prime}(z)}{f(z)}\right)\geq 0,
 \end{align*}
 for all $z\in\mathbb{D}$. The class of such functions is denoted by $\mathcal{M}(\alpha)$. It is easy to see that $\mathcal{M}(0)=\mathcal{S}^*$, the class of starlike functions, and $\mathcal{M}(1)=\mathcal{C}$, the class of convex functions.\vspace{1.2mm}
 
 \subsection{\bf The refined Bohr inequality for the class $\mathcal{M}(\alpha)$ of $\alpha$-convex functions}
 In context of recent studies on Bohr inequality for different classes of functions here we study a refined version of the Bohr inequality for the class $\mathcal{M}(\alpha)$ and examine its sharpness.
\begin{thm}\label{Th-3.1}
If $\alpha>0$ and $f(z)=z+\sum_{n=2}^{\infty}a_nz^n\in\mathcal{M}(\alpha)$ and $\Phi : [0, 1]\to [0, \infty)$ be a continuous increasing function, then 
	\begin{align*}
		|\omega(z)|+\sum_{n=2}^{\infty}|a_n||\omega(z)|^n+\Phi(\omega(z))\sum_{n=2}^{\infty}|a_n|^2|\omega(z)|^{2n}\leq d(0, \partial f(\mathbb{D}))
	\end{align*}
	for $|z|=r\leq R^{\Phi(\omega(z))}_{1,\alpha}$, where $ R^{\Phi(\omega(z))}_{1,\alpha}$ is the root in $(0, 1)$ of the equation
	\begin{align*}
		r&+\sum_{n=2}^{\infty}\left(\sum\frac{\Upsilon(\alpha, q-1)c_1^{x_1}c_2^{x_2}\cdots c_n^{x_n}}{x_1!x_2!\cdots x_n!}\right)r^n\\&\quad+\Phi(r)\sum_{n=2}^{\infty}\left(\sum\frac{\Upsilon(\alpha, q-1)c_1^{x_1}c_2^{x_2}\cdots c_n^{x_n}}{x_1!x_2!\cdots x_n!}\right)^2r^{2n}\\&= k(-1, \alpha).
	\end{align*} 
	Each radius $ R^{\Phi(\omega(z))}_{1,\alpha}$ is sharp. 
\end{thm}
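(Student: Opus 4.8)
The plan is to adapt the argument used for Theorems \ref{Th-2.2} and \ref{Th-2.1}, replacing the Janowski estimates (Lemma A and Lemma B) by their counterparts for the class $\mathcal{M}(\alpha)$. First I would record the distance estimate: since every $f\in\mathcal{M}(\alpha)$ with $\alpha>0$ is univalent, $d(0,\partial f(\mathbb{D}))=\inf_{z\in\partial f(\mathbb{D})}|f(z)-f(0)|=\inf_{z\in\partial f(\mathbb{D})}|f(z)|$, and the sharp growth/minimum-modulus theorem for $\alpha$-convex functions (the analogue of Lemma B, with Mocanu extremal function $k(\cdot,\alpha)$) yields $d(0,\partial f(\mathbb{D}))\geq k(-1,\alpha)$.

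Next I would insert the sharp coefficient estimate for $\mathcal{M}(\alpha)$, namely $|a_n|\leq\sum\frac{\Upsilon(\alpha,q-1)c_1^{x_1}\cdots c_n^{x_n}}{x_1!\cdots x_n!}$ (the analogue of Lemma A, the inner sum running over the relevant partitions with $q=x_1+\cdots+x_n$). Because $\omega$ is a Schwarz map with $|\omega(z)|\leq|z|=r$ and $\Phi$ is increasing so that $\Phi(|\omega(z)|)\leq\Phi(r)$, substituting these bounds termwise dominates the left-hand side by
\begin{align*}
P(r):={}& r+\sum_{n=2}^{\infty}\bigg(\sum\frac{\Upsilon(\alpha,q-1)c_1^{x_1}\cdots c_n^{x_n}}{x_1!\cdots x_n!}\bigg)r^n \\
&+\Phi(r)\sum_{n=2}^{\infty}\bigg(\sum\frac{\Upsilon(\alpha,q-1)c_1^{x_1}\cdots c_n^{x_n}}{x_1!\cdots x_n!}\bigg)^2 r^{2n}.
\end{align*}
By the definition of $R^{\Phi(\omega(z))}_{1,\alpha}$ this $P$ satisfies $P\big(R^{\Phi(\omega(z))}_{1,\alpha}\big)=k(-1,\alpha)$, and since $P$ has non-negative Taylor coefficients and $\Phi\geq0$ is increasing, $P$ is increasing on $(0,1)$; hence $P(r)\leq k(-1,\alpha)\leq d(0,\partial f(\mathbb{D}))$ for all $r\leq R^{\Phi(\omega(z))}_{1,\alpha}$, which is the asserted inequality.

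For sharpness I would take $f=k(\cdot,\alpha)$ and $\omega(z)=z$. For this extremal the coefficient bounds hold with equality and $d(0,\partial k(\mathbb{D}))=k(-1,\alpha)$, so evaluating the left-hand side at $r=R^{\Phi(\omega(z))}_{1,\alpha}$ returns exactly $k(-1,\alpha)=d(0,\partial f(\mathbb{D}))$; thus the radius cannot be enlarged, exactly as in the sharpness step of Theorem \ref{Th-2.2}.

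The main obstacle is two-fold. First, one must have the coefficient estimate for $\mathcal{M}(\alpha)$ in the stated Bell/Fa\`a-di-Bruno form together with the fact that it is attained by $k(\cdot,\alpha)$; this is where the hypothesis $\alpha>0$ enters, since it controls the weights $\Upsilon(\alpha,q-1)$ arising from the exponential representation of $f(z)/z$. Second, to make the majorization and the sharpness steps rigorous one must verify that $P(r)-k(-1,\alpha)$ genuinely has a unique zero in $(0,1)$ — that is, it is negative near $0$ (indeed $P(0)=0<k(-1,\alpha)$) and positive near $1$ while increasing in between — so that $R^{\Phi(\omega(z))}_{1,\alpha}$ is well defined and the bound $P(r)\leq k(-1,\alpha)$ propagates to all smaller $r$.
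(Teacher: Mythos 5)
Your proposal follows essentially the same route as the paper's own proof: the growth estimate $k(-r,\alpha)\leq|f(z)|\leq k(r,\alpha)$ (Lemma C) gives $d(0,\partial f(\mathbb{D}))\geq k(-1,\alpha)$, the Kulshrestha coefficient bounds (Lemma D) together with $|\omega(z)|\leq r$ and the monotonicity of $\Phi$ give the termwise majorization by $P(r)$, and sharpness is obtained from $f=k(\cdot,\alpha)$ with $\omega(z)=z$, exactly as in the paper. Your additional remarks on the monotonicity of $P$ and the existence of the root only make explicit what the paper leaves implicit, so the argument is correct and matches the paper's approach.
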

In particular, when $\Phi(\omega(z))=0$, we obtain the following corollary.
\begin{cor}\label{Cor-3.1}
If $\alpha>0$ and $f(z)=z+\sum_{n=2}^{\infty}a_nz^n\in\mathcal{M}(\alpha)$, then 
	\begin{align*}
		|\omega(z)|+\sum_{n=2}^{\infty}|a_n||\omega(z)|^n\leq d(0, \partial f(\mathbb{D}))
	\end{align*}
	for $|z|=r\leq  R^{0}_{1,\alpha}$, where $ R^{0}_{1,\alpha}$ is the root in $(0, 1)$ of the equation
	\begin{align*}
		r+\sum_{n=2}^{\infty}\left(\sum\frac{\Upsilon(\alpha, q-1)c_1^{x_1}c_2^{x_2}\cdots c_n^{x_n}}{x_1!x_2!\cdots x_n!}\right)r^n= k(-1, \alpha).
	\end{align*} 
	Each radius $ R^{0}_{1,\alpha}$ is sharp.
\end{cor}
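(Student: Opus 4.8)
The plan is to obtain Corollary~\ref{Cor-3.1} as the degenerate case $\Phi\equiv 0$ of Theorem~\ref{Th-3.1}. The constant function $\Phi(t)=0$ is continuous and (weakly) increasing on $[0,1]$, so it is an admissible choice in the hypothesis of Theorem~\ref{Th-3.1}. With this choice the quadratic summand $\Phi(|\omega(z)|)\sum_{n=2}^{\infty}|a_n|^2|\omega(z)|^{2n}$ vanishes identically, so the left-hand side of the Bohr-type inequality collapses to $|\omega(z)|+\sum_{n=2}^{\infty}|a_n||\omega(z)|^n$, while the $\Phi(r)$-term disappears from the defining equation, leaving exactly
\[
r+\sum_{n=2}^{\infty}\left(\sum\frac{\Upsilon(\alpha,q-1)c_1^{x_1}c_2^{x_2}\cdots c_n^{x_n}}{x_1!x_2!\cdots x_n!}\right)r^n=k(-1,\alpha).
\]
Thus the radius $R^{0}_{1,\alpha}$ of the corollary is precisely $R^{\Phi(\omega(z))}_{1,\alpha}$ evaluated at $\Phi\equiv 0$, and the inequality follows verbatim from the theorem.

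For a self-contained reading I would also record the three ingredients that drive Theorem~\ref{Th-3.1} in this special case. First, the sharp coefficient estimate for $f\in\mathcal{M}(\alpha)$ gives $|a_n|\le \sum\frac{\Upsilon(\alpha,q-1)c_1^{x_1}\cdots c_n^{x_n}}{x_1!\cdots x_n!}$; second, the corresponding growth bound yields the lower estimate $d(0,\partial f(\mathbb{D}))\ge k(-1,\alpha)$; and third, the Schwarz lemma gives $|\omega(z)|\le|z|=r$. Substituting these into the left-hand side, and using that each term is nondecreasing in $r$, one bounds the expression by $r+\sum_{n=2}^{\infty}(\cdots)\,r^n$, which is at most $k(-1,\alpha)$ exactly when $r\le R^{0}_{1,\alpha}$.

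To confirm the radius is well defined I would set $g(r):=r+\sum_{n=2}^{\infty}(\cdots)\,r^n-k(-1,\alpha)$ and note $g(0)=-k(-1,\alpha)<0$, while as $r\to 1^{-}$ the majorant on the left eventually exceeds the fixed finite value $k(-1,\alpha)$, so $g(r)>0$ near $1$; continuity and the intermediate value theorem then produce a root $R^{0}_{1,\alpha}\in(0,1)$. Sharpness is inherited directly from Theorem~\ref{Th-3.1}: taking the same extremal $\alpha$-convex function (the one realizing equality in the coefficient and growth bounds) together with $\omega(z)=z$, the chain of inequalities becomes an equality at $|z|=R^{0}_{1,\alpha}$, so the radius cannot be enlarged.

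There is essentially no analytic obstacle here beyond Theorem~\ref{Th-3.1}; the only points needing genuine care are verifying that $\Phi\equiv 0$ lies within the admissible class and that the defining equation degenerates correctly, both of which are immediate. I therefore expect the proof to amount to little more than a clean specialization, with the bulk of the substantive work already carried out in establishing Theorem~\ref{Th-3.1}.
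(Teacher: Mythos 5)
Your proposal is correct and follows exactly the route the paper takes: the paper introduces Corollary \ref{Cor-3.1} with the words ``In particular, when $\Phi(\omega(z))=0$, we obtain the following corollary,'' i.e.\ it is obtained as the specialization $\Phi\equiv 0$ of Theorem \ref{Th-3.1}, with the quadratic term and the $\Phi(r)$-term in the defining equation vanishing, and with the inequality, existence of the root, and sharpness all inherited from the theorem's proof. Your additional remarks (admissibility of the constant zero function as a continuous nondecreasing $\Phi$, and the three ingredients Lemma C, Lemma D, and the Schwarz bound $|\omega(z)|\le |z|$) accurately reflect what drives the paper's proof of Theorem \ref{Th-3.1}.
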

\begin{rem}
 Corollary \ref{Cor-3.1} improves the result \cite[Theorem 3]{Anand-Jain-Kumar-BMMSS-2021} involving Schwartz functions. The radius $R^{0}_{1,\alpha}$ remains the same as the radius of \cite[Theorem 3]{Anand-Jain-Kumar-BMMSS-2021}. Additionally, when $\Phi(\omega(z))\neq0$, we obtain an improved and refined version of \cite[Theorem 3]{Anand-Jain-Kumar-BMMSS-2021}.
\end{rem}
\begin{rem}
In particular, when $\omega(z)=z$, Corollary \ref{Cor-3.1} is exactly the same as \cite[Theorem 3]{Anand-Jain-Kumar-BMMSS-2021}. Consequently, Corollary \ref{Cor-3.1} improves the result of \cite[Theorem 3]{Anand-Jain-Kumar-BMMSS-2021} for Schwartz functions.
\end{rem}
\begin{rem}
From Theorem \ref{Th-3.1}, we obtain the following immediate result which is a sharp refinement of \cite[Theorem 3]{Anand-Jain-Kumar-BMMSS-2021} involving Schwartz function. 
\end{rem}
\begin{cor}
Let $\alpha>0$ and $f(z)=z+\sum_{n=2}^{\infty}a_nz^n\in\mathcal{M}(\alpha)$. Then 
	\begin{align*}
		|\omega(z)|+\sum_{n=2}^{\infty}|a_n||\omega(z)|^n+\left(\frac{1}{2}+\frac{|\omega(z)|}{1-|\omega(z)|}\right)\sum_{n=2}^{\infty}|a_n|^2|\omega(z)|^{2n}\leq d(0, \partial f(\mathbb{D}))
	\end{align*}
	for $|z|=r\leq R^{|a_0|}_{\alpha}$, where $ R^{|a_0|}_{\alpha}$ is the root in $(0, 1)$ of the equation
	\begin{align*}
		r&+\sum_{n=2}^{\infty}\left(\sum\frac{\Upsilon(\alpha, q-1)c_1^{x_1}c_2^{x_2}\cdots c_n^{x_n}}{x_1!x_2!\cdots x_n!}\right)r^n\\&\quad+\left(\frac{1+r}{2(1-r)}\right)\sum_{n=2}^{\infty}\left(\sum\frac{\Upsilon(\alpha, q-1)c_1^{x_1}c_2^{x_2}\cdots c_n^{x_n}}{x_1!x_2!\cdots x_n!}\right)^2r^{2n}\\&= k(-1, \alpha).
	\end{align*} 
	Each radius $ R^{|a_0|}_{\alpha}$ is sharp.
\end{cor}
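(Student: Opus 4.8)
The plan is to obtain this corollary as a direct specialization of Theorem \ref{Th-3.1}, taking the refinement factor to be the particular weight
\begin{align*}
\Phi(t)=\frac{1}{2}+\frac{t}{1-t},\qquad t\in[0,1).
\end{align*}
First I would verify that this $\Phi$ satisfies the hypotheses of Theorem \ref{Th-3.1}, namely that it is a continuous increasing function into $[0,\infty)$. Continuity on $[0,1)$ is immediate, and differentiating gives $\Phi'(t)=1/(1-t)^2>0$, so $\Phi$ is strictly increasing; since $\Phi(0)=1/2$ we have $\Phi(t)\geq 1/2>0$ throughout. (The blow-up at $t=1$ is harmless, since the radius is always strictly below $1$ and we only ever evaluate $\Phi$ at arguments $|\omega(z)|\leq r<1$.) Thus $\Phi$ is admissible and Theorem \ref{Th-3.1} applies verbatim with this choice.

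Next I would simplify $\Phi(r)$ to match the stated defining equation. A one-line computation gives
\begin{align*}
\Phi(r)=\frac{1}{2}+\frac{r}{1-r}=\frac{(1-r)+2r}{2(1-r)}=\frac{1+r}{2(1-r)},
\end{align*}
so substituting this into the equation
\begin{align*}
&r+\sum_{n=2}^{\infty}\left(\sum\frac{\Upsilon(\alpha, q-1)c_1^{x_1}\cdots c_n^{x_n}}{x_1!\cdots x_n!}\right)r^n\\
&\quad+\Phi(r)\sum_{n=2}^{\infty}\left(\sum\frac{\Upsilon(\alpha, q-1)c_1^{x_1}\cdots c_n^{x_n}}{x_1!\cdots x_n!}\right)^2r^{2n}=k(-1,\alpha)
\end{align*}
that defines $R^{\Phi(\omega(z))}_{1,\alpha}$ in Theorem \ref{Th-3.1} produces exactly the equation defining $R^{|a_0|}_{\alpha}$ in the statement. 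Hence the refined Bohr inequality holds for $|z|=r\leq R^{|a_0|}_{\alpha}$.

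Finally, sharpness is inherited from Theorem \ref{Th-3.1}: its extremal function together with the choice $\omega(z)=z$ already realizes equality in the refined Bohr inequality at $r=R^{\Phi(\omega(z))}_{1,\alpha}$, and since the coefficient bounds for $\mathcal{M}(\alpha)$ encoded by the multinomial sums $\sum \Upsilon(\alpha,q-1)c_1^{x_1}\cdots c_n^{x_n}/(x_1!\cdots x_n!)$ do not involve $\Phi$, they transfer unchanged. There is no genuine obstacle here; the only point requiring care is confirming the monotonicity and nonnegativity of $\Phi$ on $[0,1)$, which is precisely what lets the majorization $\Phi(|\omega(z)|)\leq\Phi(r)$ in the proof of Theorem \ref{Th-3.1} go through, after which the corollary is immediate.
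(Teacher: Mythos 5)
Your proposal is correct and matches the paper's treatment: the paper derives this corollary as an immediate specialization of Theorem \ref{Th-3.1} with $\Phi(t)=\tfrac{1}{2}+\tfrac{t}{1-t}=\tfrac{1+t}{2(1-t)}$, exactly as you do. Your additional check that this $\Phi$ is increasing and nonnegative on $[0,1)$ (with the blow-up at $t=1$ being harmless) is a sensible verification that the paper leaves implicit.
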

\subsection{\bf Improved Bohr inequality for the class $\mathcal{M}(\alpha)$ of $\alpha$-convex functions}
The recent improvement of the Bohr inequality by incorporating the term $S_r/\pi$ has garnered significant interest. Our next result provides an improved version of \cite[Theorem 3]{Anand-Jain-Kumar-BMMSS-2021} involving the term $S_r/\pi$ in the Bohr inequality for the class $\mathcal{M}(\alpha)$.
\begin{thm}\label{Th-6.2}
	Let $\alpha>0$ and $f(z)=z+\sum_{n=2}^{\infty}a_nz^n\in\mathcal{M}(\alpha)$ and $\Phi : [0, 1]\to [0, \infty)$ be a continuous increasing function. Then 
	\begin{align*}
		|\omega(z)|+\sum_{n=2}^{\infty}|a_n||\omega(z)|^n+\Phi(\omega(z))\left(\frac{S_r}{\pi}\right)\leq d(0, \partial f(\mathbb{D}))
	\end{align*}
	for $|z|=r\leq  R^{\Phi(\omega(z))}_{2,\alpha}$, where $R^{\Phi(\omega(z))}_{2,\alpha}$ is the root in $(0, 1)$ of the equation
	\begin{align*}
		r&+\sum_{n=2}^{\infty}\left(\sum\frac{\Upsilon(\alpha, q-1)c_1^{x_1}c_2^{x_2}\cdots c_n^{x_n}}{x_1!x_2!\cdots x_n!}\right)r^n\\&\quad+\Phi(r)\sum_{n=1}^{\infty}n\left(\sum\frac{\Upsilon(\alpha, q-1)c_1^{x_1}c_2^{x_2}\cdots c_n^{x_n}}{x_1!x_2!\cdots x_n!}\right)^2r^{2n}\\&= k(-1, \alpha).
	\end{align*} 
	Each radius $R^{\Phi(\omega(z))}_{2,\alpha}$ is sharp. 
\end{thm}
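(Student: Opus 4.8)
The plan is to follow verbatim the architecture of the proof of Theorem~\ref{Th-6.1}, substituting the Janowski ingredients (Lemma~A and Lemma~B) by the corresponding facts for $\mathcal{M}(\alpha)$ that already underlie Theorem~\ref{Th-3.1}. Concretely, for $f(z)=z+\sum_{n=2}^{\infty}a_nz^n\in\mathcal{M}(\alpha)$ I would use the coefficient estimate
\begin{align*}
|a_n|\leq \sum\frac{\Upsilon(\alpha, q-1)c_1^{x_1}c_2^{x_2}\cdots c_n^{x_n}}{x_1!x_2!\cdots x_n!}\quad(n\geq 2)
\end{align*}
together with the growth theorem for $\alpha$-convex functions, which gives $d(0, \partial f(\mathbb{D}))\geq k(-1, \alpha)$. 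Both are exactly the inputs used to prove Theorem~\ref{Th-3.1}, so I would cite them rather than reprove them.

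First I would record the area estimate. Since $S_r/\pi=\sum_{n=1}^{\infty}n|a_n|^2r^{2n}$ and $|a_1|=1$, the coefficient bound yields
\begin{align*}
\frac{S_r}{\pi}\leq r^2+\sum_{n=2}^{\infty}n\left(\sum\frac{\Upsilon(\alpha, q-1)c_1^{x_1}\cdots c_n^{x_n}}{x_1!\cdots x_n!}\right)^2r^{2n}.
\end{align*}
Because $\omega$ is a Schwarz function with $|\omega(z)|\leq|z|=r$ and $\Phi$ is increasing so that $\Phi(|\omega(z)|)\leq\Phi(r)$, combining the linear coefficient bound with this area bound gives
\begin{align*}
&|\omega(z)|+\sum_{n=2}^{\infty}|a_n||\omega(z)|^n+\Phi(|\omega(z)|)\left(\frac{S_r}{\pi}\right)\\
&\quad\leq r+\sum_{n=2}^{\infty}\left(\sum\frac{\Upsilon(\alpha, q-1)c_1^{x_1}\cdots c_n^{x_n}}{x_1!\cdots x_n!}\right)r^n+\Phi(r)\sum_{n=1}^{\infty}n\left(\sum\frac{\Upsilon(\alpha, q-1)c_1^{x_1}\cdots c_n^{x_n}}{x_1!\cdots x_n!}\right)^2r^{2n}.
\end{align*}
The right-hand side is exactly the majorant appearing in the defining equation for $R^{\Phi(\omega(z))}_{2,\alpha}$; by the definition of that radius it is at most $k(-1, \alpha)\leq d(0, \partial f(\mathbb{D}))$ for every $r\leq R^{\Phi(\omega(z))}_{2,\alpha}$, which establishes the asserted inequality. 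Existence and uniqueness of the root in $(0,1)$ would follow from the strict monotonicity of this majorant in $r$, its vanishing at $r=0$ (which lies below $k(-1,\alpha)>0$), and its overshooting $k(-1,\alpha)$ as $r\to 1^{-}$.

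For sharpness I would take the extremal $\alpha$-convex function $f_0$ that saturates both the coefficient estimates and the growth bound (the same extremal used for Theorem~\ref{Th-3.1}) and set $\omega(z)=z$. Then each of the inequalities above becomes an equality, so the left-hand side evaluated at $r=R^{\Phi(\omega(z))}_{2,\alpha}$ equals $k(-1, \alpha)=d(0, \partial f_0(\mathbb{D}))$, showing the radius cannot be enlarged. The detailed verification is identical in spirit to the sharpness part of Theorem~\ref{Th-6.1}, so I would merely indicate it and omit the repetition.

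The step I expect to be the main obstacle is the control of the area term $S_r/\pi$: one must confirm that the termwise squared coefficient bounds fed into the series are attained \emph{simultaneously} by the single extremal that also saturates the growth estimate $k(-1,\alpha)$, for otherwise the majorant would be strict at the purported extremal and the claimed sharpness would fail. A secondary technical point is justifying that the transcendental defining equation has a unique root in $(0,1)$, which reduces to checking convergence of the coefficient series up to $r=1$ and the positivity $k(-1,\alpha)>0$; both follow from the known asymptotics of the $\mathcal{M}(\alpha)$ coefficients.
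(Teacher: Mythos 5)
Your proposal is correct and follows essentially the same route as the paper's own proof: the growth estimate (Lemma C) for $d(0,\partial f(\mathbb{D}))\geq k(-1,\alpha)$, the coefficient bounds (Lemma D) applied termwise to both the linear sum and the area term $S_r/\pi$, the monotonicity of $\Phi$ with $|\omega(z)|\leq r$, and sharpness via the extremal $f(z)=k(z,\alpha)$ with $\omega(z)=z$. The simultaneous-saturation point you flag is exactly what the paper relies on, namely that equality in the coefficient inequality of Lemma D and in the growth bound is attained by the single function $k(z,\alpha)$.
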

\begin{rem}
In particular when $\Phi(\omega(z))=0$, then we get improved version of \cite[Theorem 3]{Anand-Jain-Kumar-BMMSS-2021} involving Schwartz function and when $\Phi(\omega(z))\neq0$ then we get more improved and refined version of \cite[Theorem 3]{Anand-Jain-Kumar-BMMSS-2021}.
\end{rem}
\begin{rem}
	In particular, when $\omega(z)=z$, then Theorem \ref{Th-6.1} improves the result \cite[Theorem 3]{Anand-Jain-Kumar-BMMSS-2021} involving Schwartz function.
\end{rem}
To prove Theorem \ref{Th-3.1}, the following preliminary results will play crucial role. Below, Lemma C is for growth estimate which will help us to find the Euclidean distance $d(0, \partial f(\mathbb{D}))$, whereas, Lemma D is for coefficients bounds of functions for the class $\mathcal{M}(\alpha)$.
\begin{lemC}\emph{(see \cite[Theorem 7, p. 146]{Goodman-1983})}
	If $\alpha>0$, and $f(z)$ is $\alpha$-convex, then 
	\begin{align*}
		k(-r, \alpha)\leq |f(z)|\leq k(r, \alpha),\; z=re^{i\theta},
	\end{align*}
	where 
	\begin{align}\label{Eq-3.1}
		\displaystyle k(z, \alpha)=\left(\frac{1}{\alpha}\int_{0}^{z}\frac{\xi^{\frac{1}{\alpha}-1}}{\left(1-\xi\right)^{\frac{2}{\alpha}}}d\xi\right)^{\alpha}.
	\end{align}
	The inequalities are sharp for each $\alpha>0$ and each $r\in (0, 1)$. 
\end{lemC}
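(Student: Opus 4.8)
The plan is to reduce Lemma~C to the classical growth theorem for starlike functions through the Miller--Mocanu--Reade representation of $\alpha$-convex functions. First I would record the identity that reformulates $\alpha$-convexity: writing $Q(z)=zf'(z)/f(z)$, differentiation gives $1+zf''(z)/f'(z)=Q(z)+zQ'(z)/Q(z)$, so the Mocanu functional becomes
\[
P(z):=\alpha\left(1+\frac{zf''(z)}{f'(z)}\right)+(1-\alpha)\frac{zf'(z)}{f(z)}=Q(z)+\alpha\,\frac{zQ'(z)}{Q(z)},
\]
and $\alpha$-convexity is exactly $\operatorname{Re}P(z)>0$. I would then establish the representation: $f\in\mathcal{M}(\alpha)$ precisely when $f=u^{\alpha}$ with $u(z)=\frac1\alpha\int_0^z g(\xi)^{1/\alpha}\xi^{-1}\,d\xi$ for some starlike $g$, the branch of $g^{1/\alpha}$ being fixed by $g(\xi)/\xi\to1$ at the origin. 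The normalization forces $\alpha z u'=g^{1/\alpha}$, so $Q=\alpha zu'/u=g^{1/\alpha}/u$ and hence $\alpha zQ'/Q=zg'/g-Q$; substituting yields $P=zg'/g$, which has positive real part exactly because $g$ is starlike. The forward direction needed below comes by defining $g$ through $zg'/g=P$ and integrating.

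For the upper bound I would integrate moduli along the radius. Since $|f(z)|=|u(z)|^{\alpha}$ and the starlike growth theorem gives $|g(\xi)|\le|\xi|/(1-|\xi|)^{2}$, estimating the integral defining $u$ along the segment $\xi=te^{i\theta}$, $0\le t\le r$, produces
\[
|u(z)|\le\frac1\alpha\int_0^r\frac{t^{1/\alpha-1}}{(1-t)^{2/\alpha}}\,dt=k(r,\alpha)^{1/\alpha},
\]
because the integrand here is precisely $g_0(\xi)^{1/\alpha}/\xi$ for the Koebe function $g_0(\xi)=\xi/(1-\xi)^{2}$. Raising to the power $\alpha$ gives $|f(z)|\le k(r,\alpha)$.

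The lower estimate $k(-r,\alpha)\le|f(z)|$ is the genuine obstacle, since the modulus of a contour integral cannot be bounded below termwise. Here I would invoke univalence: every $\alpha$-convex function is starlike, hence univalent, so $f(\mathbb{D}_r)$ is a simply connected domain containing $0$, and I would estimate $\operatorname{dist}(0,\partial f(\mathbb{D}_r))$ by pulling the shortest image segment back through $f^{-1}$ and integrating the sharp lower distortion estimate, the extremal configuration again being $g_0$. Equivalently, one integrates $\log|f(re^{i\theta})/(re^{i\theta})|=\int_0^r t^{-1}\bigl(\operatorname{Re}Q(te^{i\theta})-1\bigr)\,dt$ against the sharp lower envelope for $\operatorname{Re}Q$ generated by the extremal $g$; reconciling the two formulations so that this minimizing envelope is actually attained is the delicate point.

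Finally, sharpness is immediate from the representation: taking $g=g_0$ gives $u=k(\cdot,\alpha)^{1/\alpha}$ and $f=k(\cdot,\alpha)$, for which equality holds at $z=r$ in the upper estimate and at $z=-r$ in the lower estimate, for every $\alpha>0$ and every $r\in(0,1)$. This would complete the proof of the two-sided bound and its sharpness.
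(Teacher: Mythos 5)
The paper does not prove Lemma C at all: it is quoted, with a citation, from Goodman's book (Theorem 7, p.~146 there), so there is no internal proof to compare yours against; I can only assess your argument on its own terms. Your reduction to starlike functions via the representation $f=u^{\alpha}$, $\alpha zu'=g^{1/\alpha}$ with $g$ starlike is the right classical route (Miller--Mocanu--Reade), the verification that $P=zg'/g$ is correct, and the upper bound is essentially complete: the termwise estimate $|u(z)|\le\frac{1}{\alpha}\int_0^r t^{1/\alpha-1}(1-t)^{-2/\alpha}\,dt$ using $|g(\xi)|\le|\xi|/(1-|\xi|)^{2}$ does give $|f(z)|\le k(r,\alpha)$, and the sharpness discussion via $g=g_0$ is fine.

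The lower bound, however, is not actually proved, and you say as much (``the delicate point''). Both routes you propose rest on an unproven ingredient of essentially the same depth as the lemma itself. The distance-to-boundary route requires a sharp lower bound for $\min_{|z|=t}|f'(z)|$ over all of $\mathcal{M}(\alpha)$, which you invoke but do not establish; for starlike functions this distortion estimate is where the real work of the growth theorem lives, and for $\alpha$-convex functions it is harder still. The alternative route requires the sharp pointwise lower envelope for $\operatorname{Re}\bigl(zf'(z)/f(z)\bigr)$ on $|z|=t$ over the class; since $Q=zf'/f$ is only determined implicitly by the relation $Q+\alpha zQ'/Q=P$ with $\operatorname{Re}P>0$, extracting that envelope is a genuine differential-subordination argument, not a termwise estimate, and you do not carry it out. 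Note also that the representation itself cannot yield the lower bound directly, since $|u(z)|$ is not bounded below by integrating a lower bound on the integrand. So the upper half of the two-sided estimate is done, but the lower half --- which is the half the paper actually uses, through $d(0,\partial f(\mathbb{D}))\ge k(-1,\alpha)$ in the proofs of Theorems 3.1 and 3.2 --- remains open in your write-up.
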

\begin{lemD}\emph{(see \cite[Theorem 2, p. 208]{Kulshrestha-1974})}
	Let $f(z)=z+\sum_{n=2}^{\infty}\in\mathcal{M}(\alpha)$ and let $S(n)$ be the set of all $n$-tuples $(x_1, \ldots, x_n)$ of non-negative integers for which $\sum_{i=1}^{n}ix_i=n$ and for each $n$-tuples defined by $\sum_{i=1}^{n}x_i=q$. If 
	\begin{align*}
		\Upsilon(\alpha, q)=\alpha(\alpha-1)(\alpha-2)\cdots(\alpha-q)
	\end{align*}
	with $\Upsilon(\alpha, 0)=\alpha$, then for $n=1, 2, \ldots$
	\begin{align}\label{Eq-3.2}
		|a_{n+1}|\leq\sum\frac{\Upsilon(\alpha, q-1)c_1^{x_1}c_2^{x_2}\cdots c_n^{x_n}}{x_1!x_2!\cdots x_n!},
	\end{align}
	where summation is taken over all $n$-tuples in $S(n)$ and 
	\begin{align*}
		c_n=\frac{1}{n!\alpha^n(1+n\alpha)}\prod_{k=0}^{n-1}(2+k\alpha).
	\end{align*}
	The result is sharp.
\end{lemD}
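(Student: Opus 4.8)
The plan is to recognize the right-hand side of \eqref{Eq-3.2} as a single Taylor coefficient of an explicit power, then to realize that power as $k(z,\alpha)/z$, so that the whole statement becomes a sharp coefficient-domination estimate against the Mocanu extremal function. First I would record the combinatorial identity: since $\Upsilon(\alpha,q-1)=\alpha(\alpha-1)\cdots(\alpha-q+1)$ is the falling factorial with $q$ factors, one has $\binom{\alpha}{q}=\Upsilon(\alpha,q-1)/q!$, and expanding $(1+w)^{\alpha}=\sum_{q\ge 0}\binom{\alpha}{q}w^{q}$ with $w=\sum_{k\ge 1}c_kz^k$ and applying the multinomial theorem to $w^{q}$ makes the factor $q!$ cancel, leaving exactly $\sum_{(x_1,\dots,x_n)\in S(n)}\Upsilon(\alpha,q-1)\,c_1^{x_1}\cdots c_n^{x_n}/(x_1!\cdots x_n!)$, with $q=\sum_i x_i$, as the coefficient of $z^{n}$ in $\big(1+\sum_{k\ge1}c_kz^k\big)^{\alpha}$. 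Thus the assertion is equivalent to $|a_{n+1}|\le [z^{n}]\big(1+\sum_{k\ge1}c_kz^k\big)^{\alpha}$.

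Next I would pin down the extremal function. Writing $u(z)=\tfrac1\alpha\int_0^z \xi^{1/\alpha-1}(1-\xi)^{-2/\alpha}\,d\xi$ so that $k(z,\alpha)=u(z)^{\alpha}$ by \eqref{Eq-3.1}, a direct term-by-term integration shows $u(z)=z^{1/\alpha}\big(1+\sum_{k\ge1}c_kz^k\big)$ precisely when $c_k(k+1/\alpha)=\tfrac1\alpha\binom{2/\alpha+k-1}{k}$, i.e. $c_k=\tfrac{1}{1+k\alpha}\binom{2/\alpha+k-1}{k}$, which is exactly the constant in the statement (after rewriting $\prod_{j=0}^{k-1}(2+j\alpha)=\alpha^k\Gamma(2/\alpha+k)/\Gamma(2/\alpha)$). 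Hence $k(z,\alpha)/z=\big(1+\sum_{k\ge1}c_kz^k\big)^{\alpha}$, its $n$-th coefficient is the right-hand side of \eqref{Eq-3.2}, and equality in the lemma holds for $f=k(\cdot,\alpha)$; this settles sharpness and reduces everything to the inequality.

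For the inequality I would pass to the Mocanu functional $p(z)=\alpha\big(1+zf''(z)/f'(z)\big)+(1-\alpha)zf'(z)/f(z)$, which by the definition of $\mathcal{M}(\alpha)$ satisfies $\mathrm{Re}\,p>0$ and $p(0)=1$, so $p$ is a Carathéodory function with the sharp bounds $|p_n|\le 2$ on its coefficients. A short computation gives $zg'(z)/g(z)=p(z)$ for $g:=f^{1-\alpha}(zf')^{\alpha}$, so $g$ is starlike, and Mocanu's representation yields $f(z)=\big(\tfrac1\alpha\int_0^z (g(t)/t)^{1/\alpha}t^{1/\alpha-1}\,dt\big)^{\alpha}$. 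I would then expand this chain explicitly, using $\log(g/t)=\sum_n (p_n/n)t^n$, exponentiating the $\tfrac1\alpha$-multiple, multiplying by $t^{1/\alpha-1}$, integrating (which is where the factors $1/(1+n\alpha)$ enter), and finally raising to the power $\alpha$, so as to express each $a_{n+1}$ as an explicit polynomial in $p_1,\dots,p_n$ whose value at the extremal datum $p(z)=(1+z)/(1-z)$ (Koebe $g$, $f=k(\cdot,\alpha)$) is the claimed bound.

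The hard part will be the final comparison, because the polynomial $a_{n+1}(p_1,\dots,p_n)$ is genuinely nonlinear and the estimate $|p_n|\le 2$ cannot be inserted term-by-term: the falling factorial $\Upsilon(\alpha,q-1)$ changes sign once $q-1>\alpha$, so replacing the signed multinomial sum by its absolute value overshoots the stated constant and destroys sharpness. To respect these cancellations I would run the induction not on the $p_n$ but on the associated starlike coefficients $b_n=g_{n+1}$, where the extremal bounds $|b_n|\le n+1$ (attained by the Koebe function) are positive and monotone, and then verify that the two non-integer powers $(\cdot)^{1/\alpha}$ and $(\cdot)^{\alpha}$ in the representation preserve coefficient domination by the corresponding coefficients of $(1-z)^{-2}$ and $\big(1+\sum_{k\ge1}c_kz^k\big)^{\alpha}$; equivalently, invoking a Rogosinski-type principle that the subordination $p\prec(1+z)/(1-z)$, together with the nonnegativity of the Taylor coefficients of the dominant $k(z,\alpha)/z$, forces $|a_{n+1}|\le[z^{n}]\big(k(z,\alpha)/z\big)$. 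Establishing that this majorization is compatible with the signed sum, rather than its term-wise modulus, is the delicate point on which the sharpness of the lemma rests.
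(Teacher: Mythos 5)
The paper does not prove Lemma~D at all: it is imported verbatim from Kulshrestha \cite[Theorem 2, p.~208]{Kulshrestha-1974}, so there is no internal argument to compare against, and your attempt must be judged as a standalone proof of Kulshrestha's theorem. Its preparatory part is sound and worth keeping: the identity $\binom{\alpha}{q}=\Upsilon(\alpha,q-1)/q!$ together with the multinomial theorem does show that the right-hand side of \eqref{Eq-3.2} equals $[z^n]\bigl(1+\sum_{k\ge1}c_kz^k\bigr)^{\alpha}$; your computation $c_k=\frac{1}{1+k\alpha}\binom{2/\alpha+k-1}{k}$ correctly identifies this series with $k(z,\alpha)/z$ from \eqref{Eq-3.1}; and equality for $f=k(\cdot,\alpha)$ settles sharpness, consistent with the extremal function the paper itself uses. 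The reduction through $g=f^{1-\alpha}(zf')^{\alpha}\in\mathcal{S}^{*}$ and Mocanu's integral representation is likewise standard and correct.

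The decisive step, however, is missing, and you in effect say so yourself: the inequality $|a_{n+1}|\le[z^n]\bigl(k(z,\alpha)/z\bigr)$ is never derived. The ``Rogosinski-type principle'' you invoke --- that $p\prec(1+z)/(1-z)$ plus nonnegativity of the majorant's Taylor coefficients forces coefficientwise domination --- is not a theorem: subordination does not imply $|f_n|\le g_n$ in any such generality (already $z^2\prec z$ violates it), and Rogosinski's actual results give $\sum_{k\le n}|f_k|^2\le\sum_{k\le n}|g_k|^2$ or bounds for convex and starlike majorants, none of which passes through the two fractional powers in your chain. Your fallback, ``verify that the non-integer powers preserve coefficient domination,'' is precisely what fails termwise, for the reason you identify: $\Upsilon(\alpha,q-1)$ changes sign. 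A workable mechanism exists but must actually be supplied. The first power linearizes: $h=(g/z)^{1/\alpha}$ satisfies $zh'=\tfrac{1}{\alpha}(p-1)h$ with Carath\'eodory bounds $|p_k|\le2$, so induction against the \emph{positive} coefficients of $(1-z)^{-2/\alpha}$ yields $|h_n|\le[z^n](1-z)^{-2/\alpha}$, and the integration step (the factors $1/(1+k\alpha)$) preserves this domination. But the final $\alpha$-th power requires its own inductive differential recursion (for instance from $zf'\,u=f\,g^{1/\alpha}$ with $f=u^{\alpha}$), again compared against the positive coefficients of the extremal; this is in substance where Kulshrestha's proof does its work. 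As it stands, your proposal is a correct reduction plus an accurate diagnosis of the obstruction, not a proof of the lemma.
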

\begin{proof}[\bf Proof of Theorem \ref{Th-3.1}]
	Let $f\in\mathcal{M}(\alpha)$. Then in view of Lemma C, we have
	\begin{align*}
		k(-r, \alpha)\leq |f(z)|\leq k(r, \alpha),
	\end{align*}
	where $k$ is the function as given by \eqref{Eq-3.1}. Thus it follows that the Euclidean distance $d(0, \partial f(\mathbb{D}))$ satisfies   
	\begin{align}\label{Eq-3.3}
		d(0, \partial f(\mathbb{D}))\geq k(-1, \alpha).
	\end{align}
Moreover, the equality in \eqref{Eq-3.2} is achieved for the function $f(z)=k(z, \alpha)$.\vspace{1.2mm}
	 
	It is given that the Bohr radius $R_{\alpha}$ is the root in $(0, 1)$ of the equation 
	\begin{align*}
		r&+\sum_{n=2}^{\infty}\left(\sum\frac{\Upsilon(\alpha, q-1)c_1^{x_1}c_2^{x_2}\cdots c_n^{x_n}}{x_1!x_2!\cdots x_n!}\right)r^n\\&\quad+\Phi(r)\sum_{n=2}^{\infty}\left(\sum\frac{\Upsilon(\alpha, q-1)c_1^{x_1}c_2^{x_2}\cdots c_n^{x_n}}{x_1!x_2!\cdots x_n!}\right)^2r^{2n}=k(-1, \alpha)
	\end{align*}
	For $0\leq |z|=r\leq R^{\Phi(\omega(z))}_{1,\alpha}$. it is easy to verify that $k(r, \alpha)\leq k(-1, \alpha)$. Since $\omega : \mathbb{D}\to\mathbb{D}$ be such that $|\omega(z)|\leq |z|=r$, as the function $ \Phi$ is increasing, we have $\Phi(|\omega(z)|)\leq \Phi(r)$. Thus, using \eqref{Eq-3.1} and \eqref{Eq-3.3}, a simple computation using Lemma D gives that
	\begin{align*}
		|\omega(z)|&+\sum_{n=2}^{\infty}|a_n||\omega(z)|^n+\Phi(|\omega(z)|)\sum_{n=2}^{\infty}|a_n|^2|\omega(z)|^{2n}\\\leq r&+\sum_{n=2}^{\infty}\left(\sum\frac{\Upsilon(\alpha, q-1)c_1^{x_1}c_2^{x_2}\cdots c_n^{x_n}}{x_1!x_2!\cdots x_n!}\right)r^n\\&\quad+\Phi(r)\sum_{n=2}^{\infty}\left(\sum\frac{\Upsilon(\alpha, q-1)c_1^{x_1}c_2^{x_2}\cdots c_n^{x_n}}{x_1!x_2!\cdots x_n!}\right)^2r^{2n}\\&\leq k(-1, \alpha)\\&\leq d(f(0,\partial f(\mathbb{D}))
	\end{align*}
	for $|z|=r\leq R^{\Phi(\omega(z))}_{1,\alpha}$. Thus the desired inequality is established. \vspace{1.2mm}
	
	The next step of the proof is to show that the radius $R_{\alpha}$ is sharp. Hence, we consider the function $f(z)=k(z, \alpha)$ which is given by \eqref{Eq-3.2}. For $|z|=R^{\Phi(\omega(z))}_{\alpha}$, $\omega(z)=z$ and $f(z)=k(z, \alpha)$, in view of Lemma D and \eqref{Eq-3.3}, we obtain 
	\begin{align*}
		|\omega(z)|&+\sum_{n=2}^{\infty}|a_n||\omega(z)|^n+\Phi(|\omega(z)|)\sum_{n=2}^{\infty}|a_n|^2|\omega(z)|^{2n}\\= R^{\Phi(\omega(z))}_{1,\alpha}&+\sum_{n=2}^{\infty}\left(\sum\frac{\Upsilon(\alpha, q-1)c_1^{x_1}c_2^{x_2}\cdots c_n^{x_n}}{x_1!x_2!\cdots x_n!}\right)\left(R^{\Phi(\omega(z))}_{1,\alpha}\right)^n\\&\quad+\Phi\left(R^{\Phi(\omega(z))}_{1,\alpha}\right)\sum_{n=2}^{\infty}\left(\sum\frac{\Upsilon(\alpha, q-1)c_1^{x_1}c_2^{x_2}\cdots c_n^{x_n}}{x_1!x_2!\cdots x_n!}\right)^2\left(R^{\Phi(\omega(z))}_{1,\alpha}\right)^{2n}\\&= k(-1, \alpha)\\&=d(f(0,\partial f(\mathbb{D})).
	\end{align*}
	Thus the radius $R^{\Phi(\omega(z))}_{1,\alpha}$ is sharp.
\end{proof}
\begin{proof}[\bf Proof of Theorem \ref{Th-6.2}]
	Let $f\in\mathcal{M}(\alpha)$. In view of Lemma C, we have
	\begin{align*}
		k(-r, \alpha)\leq |f(z)|\leq k(r, \alpha),
	\end{align*}
	where $k$ is the function as given by \eqref{Eq-3.1}. It follows that the Euclidean distance $d(0, \partial f(\mathbb{D}))$ satisfies   
	\begin{align*}
		d(0, \partial f(\mathbb{D}))\geq k(-1, \alpha).
	\end{align*}
	Moreover, the equality in \eqref{Eq-3.2} is achieved for the function $f(z)=k(z, \alpha)$.\vspace{1.2mm}
	
	It is given that the Bohr radius $R_{\alpha}$ is the root in $(0, 1)$ of the equation 
	\begin{align*}
		r&+\sum_{n=2}^{\infty}\left(\sum\frac{\Upsilon(\alpha, q-1)c_1^{x_1}c_2^{x_2}\cdots c_n^{x_n}}{x_1!x_2!\cdots x_n!}\right)r^n\\&\quad+\Phi(r)\sum_{n=1}^{\infty}n\left(\sum\frac{\Upsilon(\alpha, q-1)c_1^{x_1}c_2^{x_2}\cdots c_n^{x_n}}{x_1!x_2!\cdots x_n!}\right)^2r^{2n}=k(-1, \alpha)
	\end{align*}
	For $0\leq |z|=r\leq R^{\Phi(\omega(z))}_{2,\alpha}$. it is easy to verify that $k(r, \alpha)\leq k(-1, \alpha)$. Since $\omega : \mathbb{D}\to\mathbb{D}$ be such that $|\omega(z)|\leq |z|=r$, as the function $ \Phi$ is increasing, we have $\Phi(|\omega(z)|)\leq \Phi(r)$. Thus, using \eqref{Eq-3.1} and \eqref{Eq-3.3}, a simple computation using Lemma D gives that
	\begin{align*}
		|\omega(z)|&+\sum_{n=2}^{\infty}|a_n||\omega(z)|^n+\Phi(|\omega(z)|)\sum_{n=1}^{\infty}n|a_n|^2|\omega(z)|^{2n}\\\leq r&+\sum_{n=2}^{\infty}\left(\sum\frac{\Upsilon(\alpha, q-1)c_1^{x_1}c_2^{x_2}\cdots c_n^{x_n}}{x_1!x_2!\cdots x_n!}\right)r^n\\&\quad+\Phi(r)\sum_{n=1}^{\infty}n\left(\sum\frac{\Upsilon(\alpha, q-1)c_1^{x_1}c_2^{x_2}\cdots c_n^{x_n}}{x_1!x_2!\cdots x_n!}\right)^2r^{2n}\\&\leq k(-1, \alpha)\\&\leq d(f(0,\partial f(\mathbb{D}))
	\end{align*}
	for $|z|=r\leq R^{\Phi(\omega(z))}_{2,\alpha}$. Thus the desired inequality is established. \vspace{1.2mm}
	
	The next step of the proof is to show that the radius $R^{\Phi(\omega(z))}_{2,\alpha}$ is sharp. Hence, we consider the function $f(z)=k(z, \alpha)$ which is given by \eqref{Eq-3.2}. For $|z|=R^{\Phi(\omega(z))}_{2,\alpha}$, $\omega(z)=z$ and $f(z)=k(z, \alpha)$, in view of Lemma D and \eqref{Eq-3.3}, we obtain 
	\begin{align*}
		|\omega(z)|&+\sum_{n=2}^{\infty}|a_n||\omega(z)|^n+\Phi(|\omega(z)|)n\sum_{n=1}^{\infty}|a_n|^2|\omega(z)|^{2n}\\= &R^{\Phi(\omega(z))}_{2,\alpha}+\sum_{n=2}^{\infty}\left(\sum\frac{\Upsilon(\alpha, q-1)c_1^{x_1}c_2^{x_2}\cdots c_n^{x_n}}{x_1!x_2!\cdots x_n!}\right)\left(R^{\Phi(\omega(z))}_{2,\alpha}\right)^n\\&\quad+\Phi\left(R^{\Phi(\omega(z))}_{2,\alpha}\right)\sum_{n=1}^{\infty}n\left(\sum\frac{\Upsilon(\alpha, q-1)c_1^{x_1}c_2^{x_2}\cdots c_n^{x_n}}{x_1!x_2!\cdots x_n!}\right)^2\left(R^{\Phi(\omega(z))}_{2,\alpha}\right)^{2n}\\&= k(-1, \alpha)\\&=d(f(0,\partial f(\mathbb{D})).
	\end{align*}
	Thus the radius $R^{\Phi(\omega(z))}_{2,\alpha}$ is sharp.
\end{proof}
\section{\bf The Bohr inequality for a class involving second-order differential subordination associated with Janowski functions}
For $\beta\geq\gamma\geq 0$, we consider the class $\mathcal{R}(\beta, \gamma, h)$ which is defined by making use of subordination as 
\begin{align*}
	\mathcal{R}(\beta, \gamma, h)=\{f\in\mathcal{A} : f(z)+\beta z f^{\prime}(z)+\gamma z^2 f^{\prime\prime}(z)\prec h(z),\; z\in\mathbb{D}\},
\end{align*}
where $h$ is a Janowski function. The class $\mathcal{R}(\beta, \gamma, h)$ can be seen as an extension to the class 
\begin{align*}
	\mathcal{R}(\beta, h)=\{f\in\mathcal{A} : f(z)+\beta z f^{\prime}(z)\prec h(z),\; z\in\mathbb{D}\}.
\end{align*}
It is easy to see that $\mathcal{R}(\beta, 0,  h)=\mathcal{R}(\beta, h)$. Many variation of this class have been studied for different geometric properties by various authors (see \cite{Yang-Liu-AAA-2011,Srivastava-Filomat-2016,Gao-Zhou-AMC-2007} and references there).\vspace{1.2mm}

For two analytic functions $f(z)=\sum_{n=0}^{\infty}a_nz^n$ and $g(z)=\sum_{n=0}^{\infty}b_nz^n$ for $z\in\mathbb{D}$, the Hadamard product (or convolution) is the function $f\star g$, defined by 
\begin{align*}
	(f\star g)(z)=\sum_{n=0}^{\infty}a_nb_nz^n,\; z\in\mathbb{D}.
\end{align*}
Consider the function $\varphi_{\lambda}$, defined by 
\begin{align*}
	\varphi_{\lambda}(z)=\int_{0}^{1}\frac{dt}{1-zt^{\lambda}}=\sum_{n=0}^{\infty}\frac{z^n}{1+\lambda n}.
\end{align*}
For ${\rm Re}\; \lambda\geq 0$, the function $\varphi_{\lambda}$ is convex in $\mathbb{D}$ due to \cite[Theorem 5, p. 113]{Ruscheweyh-PAMS-1975}.\vspace{1.2mm}

For $\beta\geq \gamma\geq 0$, let $\nu+\mu=\beta-\gamma$ and $\mu\nu=\gamma$ and 
\begin{align}\label{Eq-444.111}
	q(z)=\int_{0}^{1}\int_{0}^{1}h\left(zt^{\mu}s^{\nu}\right)dtds=(\varphi_{\nu}\star\varphi_{\mu})\star h(z).
\end{align}
Since $\varphi_{\nu}\star\varphi_{\mu}$ is a convex function and $h\in\mathcal{ST}[A, B]$, it follows from \cite[Theorem 5, p. 167]{Ma-Minda-1992} and $q\in\mathcal{ST}[A, B]$.\vspace{1.2mm}

As a follow-up to recent studies on the Bohr inequality, we present our next result, which refines the Bohr inequality \cite[Theorem 2]{Anand-Jain-Kumar-BMMSS-2021} for the class $\mathcal{R}(\beta, \gamma, h)$ by incorporating a Schwartz function.
\begin{thm}\label{Th-4.1}
	Let $-1\leq B\leq A\leq 1$, $f(z)=\sum_{n=0}^{\infty}a_nz^n\in\mathcal{R}(\beta, \gamma, h)$ and $h$ be a Janowski starlike function, and and $\Phi : [0, 1]\to [0, \infty)$ be a continuous increasing function. Then 
	\begin{align*}
		\sum_{n=1}^{\infty}|a_n|\omega(z)|^n+\frac{\Phi(|\omega(z)|)}{|h^{\prime}(0)|}\sum_{n=2}^{\infty}|a_n|^2\omega(z)|^{2n}\leq d(h(0), \partial h(\mathbb{D}))
	\end{align*}
	for $|z|=r\leq R^{\nu, \mu}_{A, B}$, where $R^{\nu, \mu}_{A, B}$ is the smallest root of the equation
	\begin{align}\label{Eq-44.11}
		&\frac{r}{1+(\nu+\mu)+\nu\mu}+\sum_{n=2}^{\infty}\left(\frac{\displaystyle\prod_{k=0}^{n-2}\frac{|(B-A)+Bk|}{k+1}}{1+(\nu+\mu)n+\nu\mu n^2}\right)r^n\\&\quad\nonumber+{\Phi(r)}\sum_{n=2}^{\infty}\left(\frac{\displaystyle\prod_{k=0}^{n-2}\frac{|(B-A)+Bk|}{k+1}}{1+(\nu+\mu)n+\nu\mu n^2}\right)^2r^{2n}=l_{(-A, -B)}(1).
	\end{align}
	Each radius $ R^{\nu, \mu}_{A, B} $ is sharp. 
\end{thm}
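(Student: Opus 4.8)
The plan is to push the problem out of the subordination class $\mathcal{R}(\beta,\gamma,h)$ and into the Janowski starlike class $\mathcal{ST}[A,B]$, where Lemma A and Lemma B are available, and then repeat the Bohr bookkeeping carried out in the proofs of Theorem \ref{Th-2.2} and Theorem \ref{Th-6.1}. First I would record the coefficient identity for the subordination operator: writing $g(z):=f(z)+\beta zf'(z)+\gamma z^2f''(z)=\sum_n g_n z^n$, one has $g_n=\bigl(1+\beta n+\gamma n(n-1)\bigr)a_n$, and the relations $\nu+\mu=\beta-\gamma$, $\nu\mu=\gamma$ factor the multiplier as $1+\beta n+\gamma n(n-1)=(1+\nu n)(1+\mu n)$. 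Since $g\prec h$, the function $\varphi_\nu\star\varphi_\mu$ is convex, and convolution with a convex function preserves subordination, formula \eqref{Eq-444.111} gives $f=(\varphi_\nu\star\varphi_\mu)\star g\prec(\varphi_\nu\star\varphi_\mu)\star h=q$ with $q\in\mathcal{ST}[A,B]$. Combining the explicit convolution coefficients $q_n=h_n/\bigl[(1+\nu n)(1+\mu n)\bigr]$ with the sharp Janowski bound of Lemma A for $h$ then furnishes the estimate I actually use,
\begin{align*}
|a_n|\le\frac{1}{(1+\nu n)(1+\mu n)}\prod_{k=0}^{n-2}\frac{|(B-A)+Bk|}{k+1}\quad(n\ge2),\qquad |a_1|\le\frac{1}{(1+\nu)(1+\mu)}.
\end{align*}

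Next I would pin down the target distance. Because $h\in\mathcal{ST}[A,B]$ with $h(0)=0$, Lemma B gives $l_{(-A,-B)}(r)\le|h(re^{i\theta})|\le l_{(A,B)}(r)$, so letting $r\to1$ yields $d(h(0),\partial h(\mathbb{D}))=\inf_{w\in\partial h(\mathbb{D})}|w|\ge l_{(-A,-B)}(1)$, which is precisely the right-hand side of \eqref{Eq-44.11}.

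For the assembly, using $|\omega(z)|\le|z|=r$ and the monotonicity $\Phi(|\omega(z)|)\le\Phi(r)$ exactly as in Theorem \ref{Th-2.2}, I would substitute the coefficient bounds into both sums (note $|h'(0)|=1$ for $h\in\mathcal{ST}[A,B]$, which is why no $|h'(0)|$ appears in \eqref{Eq-44.11}) and rewrite $1+(\nu+\mu)n+\nu\mu n^2=(1+\nu n)(1+\mu n)$; the left-hand side of the claimed inequality is then dominated by the left-hand side of \eqref{Eq-44.11}. Hence for $r\le R^{\nu,\mu}_{A,B}$ that quantity is at most $l_{(-A,-B)}(1)\le d(h(0),\partial h(\mathbb{D}))$, which is the assertion. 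Existence of a root in $(0,1)$ follows from the same sign-change argument used for the analogous equations in Section 2: the defining expression equals $-l_{(-A,-B)}(1)<0$ at $r=0$, is continuous and increasing, and is positive for $r$ near $1$; I take $R^{\nu,\mu}_{A,B}$ to be the smallest such root.

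Sharpness would be verified on the extremal member $f=q$, equivalently the solution of $g=h=l_{(A,B)}$, with $\omega(z)=z$ and $|z|=R^{\nu,\mu}_{A,B}$: for this choice $|a_n|$ equals the bound above and $d(h(0),\partial h(\mathbb{D}))=l_{(-A,-B)}(1)$, so every inequality collapses to an equality and the radius cannot be enlarged. The main obstacle is the coefficient estimate. For an arbitrary univalent majorant the subordination $g\prec h$ does not force $|g_n|$, equivalently $|a_n|$, below the Janowski coefficient bound termwise, so the argument must genuinely exploit that the majorant is Janowski starlike --- i.e. route through $f\prec q$, $q\in\mathcal{ST}[A,B]$ and the explicit $q_n=h_n/\bigl[(1+\nu n)(1+\mu n)\bigr]$ --- and it is this coefficient majorization, rather than the subsequent Bohr bookkeeping, where the real work lies.
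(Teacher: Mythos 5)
Your proposal follows essentially the same route as the paper's proof: bound $|a_n|$ by dividing the Janowski coefficient bound of Lemma A by the multiplier $1+\beta n+\gamma n(n-1)=(1+\nu n)(1+\mu n)$, bound the distance from below via Lemma B, run the Bohr bookkeeping with $|\omega(z)|\le r$ and $\Phi$ increasing, and verify sharpness on $q=(\varphi_\nu\star\varphi_\mu)\star l_{(A,B)}$ with $\omega(z)=z$. Two remarks. First, you normalize $|h'(0)|=1$; the paper instead keeps $h$ unnormalized, passes to $H=(h-h(0))/h'(0)\in\mathcal{ST}[A,B]$, and eliminates $|h'(0)|$ only at the end through the inequality $|h'(0)|\le d(h(0),\partial h(\mathbb{D}))/l_{(-A,-B)}(1)$ --- this is why the statement carries the factor $1/|h'(0)|$ in front of the refinement term, a detail your normalization suppresses. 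Second, the step you yourself single out as the crux --- passing from $g\prec h$ (equivalently $f\prec q$) to the termwise bound $|a_n|\le \bigl(\prod_{k=0}^{n-2}|(B-A)+Bk|/(k+1)\bigr)/\bigl[(1+\nu n)(1+\mu n)\bigr]$ --- does not follow from subordination alone, and your proposal does not actually supply the missing argument; but the paper's own proof makes exactly the same leap (it applies Lemma A to $(G-G(0))/h'(0)$, which is merely subordinate to a member of $\mathcal{ST}[A,B]$, not itself a member of that class), so you have reproduced the paper's argument faithfully, including its weakest point, which you at least flag explicitly rather than paper over.
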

\begin{rem}
	In particular, if $\Phi(\omega(z))=0$, we have an improved version of \cite[Theorem 2]{Anand-Jain-Kumar-BMMSS-2021} in terms of Schwartz function. On the other hand, if $\Phi(\omega(z))\neq0$, we obtain a more improved version of \cite[Theorem 2]{Anand-Jain-Kumar-BMMSS-2021} that involves a general form of positive function $\Phi(\omega(z))$.
\end{rem}
\begin{rem}
	In particular, when $\omega(z)=z,$ we obtain a more general version of \cite[Theorem 2]{Anand-Jain-Kumar-BMMSS-2021} in terms of a positive function $\omega(z)$.
\end{rem}
\begin{proof}[\bf Proof of the Theorem \ref{Th-4.1}]
	Suppose that $G(z)=f(z)+\beta z f^{\prime}(z)+\gamma z^2 f^{\prime\prime}(z)\prec h(z)$,\; $a_1=1$.\vspace{1.2mm}
	
Consider 
\begin{align*}
	\frac{1}{h^{\prime}(0)}\sum_{n=1}^{\infty}[1+\beta n+\gamma n(n-1)]a_nz^n=\frac{G(z)-G(0)}{h^{\prime}(0)}\prec \frac{h(z)-h(0)}{h^{\prime}(0)}=H(z).
\end{align*}
Since $h\in\mathcal{ST}[A, B]$, it follows that $H\in\mathcal{ST}[A, B]$. Thus by Lemma A gives that 
\begin{align*}
|a_n|\leq 	\bigg|\frac{h^{\prime}(0)}{1+\beta n+\gamma n(n-1)}\bigg|\prod_{k=0}^{n-2}\frac{|(B-A)+Bk|}{k+1}
\end{align*}
for each $n\geq 2$.\vspace{1.2mm} 

Given the inequality above, as well as the condition that $|\omega(z)|\leq |z|=r$ and $\Phi$ being an increasing function, a straightforward computation yields that
\begin{align}\label{Eq-4.1}
&\sum_{n=1}^{\infty}|a_n|\omega(z)|^n+\frac{\Phi(|\omega(z)|)}{|h^{\prime}(0)|}\sum_{n=2}^{\infty}|a_n|^2\omega(z)|^{2n}\\&\leq\nonumber \frac{|h^{\prime}(0)|r}{1+(\nu+\mu)+\nu\mu}+\sum_{n=2}^{\infty}\left(\frac{|h^{\prime}(0)|\displaystyle\prod_{k=0}^{n-2}\frac{|(B-A)+Bk|}{k+1}}{1+(\nu+\mu)n+\nu\mu n^2}\right)r^n\\&\nonumber\quad+\frac{\Phi(r)}{|h^{\prime}(0)|}\sum_{n=2}^{\infty}\left(\frac{\displaystyle |h^{\prime}(0)|\prod_{k=0}^{n-2}\frac{|(B-A)+Bk|}{k+1}}{1+(\nu+\mu)n+\nu\mu n^2}\right)^2r^{2n}
\end{align}
Since the function $H\in\mathcal{ST}[A, B]$, using \eqref{Eq-2.2} and \eqref{Eq-2.3}, the following inequality holds
\begin{align}\label{Eq-4.2}
	l_{(-A, -B)}(r)\leq |H\left(re^{i\theta}\right)|\leq l_{(A, B)}(r),\; 0<r\leq 1. 
\end{align}
It follows from \eqref{Eq-4.2} that
\begin{align*}
	d(0, \partial H(\mathbb{D}))\geq l_{(-A, -B)}(1)
\end{align*}
which gives that 
\begin{align*}
	d(h(0), \partial h(\mathbb{D}))=\inf_{z\in\partial f(\mathbb{D})}|h(\xi)-h(0)|\geq |h^{\prime}(0)|l_{(-A, -B)}(1).
\end{align*}
Thus we have 
\begin{align}\label{Eq-4.3}
	|h^{\prime}(0)|\leq \frac{d(h(0), \partial h(\mathbb{D}))}{l_{(-A, -B)}(1)}.
\end{align}
Using \eqref{Eq-4.1} and \eqref{Eq-4.3}, we obtain 
\begin{align*}
&\sum_{n=1}^{\infty}|a_n|\omega(z)|^n+\frac{\Phi(|\omega(z)|)}{|h^{\prime}(0)|}\sum_{n=2}^{\infty}|a_n|^2\omega(z)|^{2n}\\&\leq\nonumber \frac{d(h(0), \partial h(\mathbb{D}))}{l_{(-A, -B)}(1)}\left(\frac{r}{1+(\nu+\mu)+\nu\mu}+\sum_{n=2}^{\infty}\left(\frac{\displaystyle\prod_{k=0}^{n-2}\frac{|(B-A)+Bk|}{k+1}}{1+(\nu+\mu)n+\nu\mu n^2}\right)r^n\right)\\&\nonumber\quad+\frac{d(h(0), \partial h(\mathbb{D}))}{l_{(-A, -B)}(1)}\Phi(r)\sum_{n=2}^{\infty}\left(\frac{\displaystyle \prod_{k=0}^{n-2}\frac{|(B-A)+Bk|}{k+1}}{1+(\nu+\mu)n+\nu\mu n^2}\right)^2r^{2n}\\&\leq d(h(0), \partial h(\mathbb{D}))
\end{align*}
for $|z|=r\leq R^{\nu, \mu}_{A, B}$, where the radius $R^{\nu, \mu}_{A, B}$ is the smallest positive root of the equation \eqref{Eq-44.11}.\vspace{1.2mm} 

To show that the radius $ R^{\nu, \mu}_{A, B} $ is sharp, we consider the function $f(z)=q(z)=(\varphi_{\nu}\star\varphi_{\mu})\star h(z)$ as defined in \eqref{Eq-444.111} with $h(z)=l_{(A, B)}(z)$, where $l_{(A, B)}(z)$ is given by \eqref{Eq-2.3}. Moreover, $f(z)\in\mathcal{R}(\beta, \gamma, h)$. This yields that 
\begin{align}\label{Eq-4.6}
	f(z)=\frac{z}{1+(\nu+\mu)+\nu\mu}+\sum_{n=2}^{\infty}\left(\frac{\displaystyle\prod_{k=0}^{n-2}\frac{|(B-A)+Bk|}{k+1}}{1+(\nu+\mu)n+\nu\mu n^2}\right)z^n.
\end{align}
Therefore, for $|z|=R^{\nu, \mu}_{A, B}$, for $f(z)$ as given in \eqref{Eq-4.6} and $\omega(z)=z$, we see that 
\begin{align*}
	&\sum_{n=1}^{\infty}|a_n|\omega(z)|^n+\Phi(|\omega(z)|)\sum_{n=2}^{\infty}|a_n|^2\omega(z)|^{2n}\\&=|h^{\prime}(0)|\left(\frac{r}{1+(\nu+\mu)+\nu\mu}+\sum_{n=2}^{\infty}\left(\frac{\displaystyle\prod_{k=0}^{n-2}\frac{|(B-A)+Bk|}{k+1}}{1+(\nu+\mu)n+\nu\mu n^2}\right)r^n\right)\\&\quad\nonumber+|h^{\prime}(0)|{\Phi(r)}\sum_{n=2}^{\infty}\left(\frac{\displaystyle\prod_{k=0}^{n-2}\frac{|(B-A)+Bk|}{k+1}}{1+(\nu+\mu)n+\nu\mu n^2}\right)^2r^{2n}\\&=|h^{\prime}(0)|l_{(-A, -B)}(1)\\&=d(h(0), \partial h(\mathbb{D})).
\end{align*}
Thus, the radius $ R^{\nu, \mu}_{A, B} $ is sharp. 
\end{proof}
\section{\bf The Bohr inequality for typically real functions $\mathcal{TR}$}
The class of typically real functions was introduced by Rogosinski (see \cite{Rogosinski-MZ-1932}). An analytic function $f(z)=z+\sum_{n=2}^{\infty}a_nz^n$ which satisfies the condition ${\rm Im}\; (f(z)){\rm Im}\; z>0$ for non-real $z\in\mathbb{D}$ is said to be typically real in $\mathbb{D}$. The class of such functions is denoted by $\mathcal{TR}$. In this section, we study the Bohr radius, the Bohr-Rogosinski radius for the class $\mathcal{TR}$ of typically real functions.
\subsection{\bf Refined Bohr inequality for typically real functions $\mathcal{TR}$}

Our next result is about the refinement of Bohr inequality \cite[Theorem 4]{Anand-Jain-Kumar-BMMSS-2021} for the class $\mathcal{TR}$ involving Schwartz function.
\begin{thm}\label{Th-5.1}
	Let $f(z)=z+\sum_{n=2}^{\infty}a_nz^n$ be in the class $\mathcal{TR}$ and $z=re^{i\theta}\in\mathbb{D}$ and and $\Phi : [0, 1]\to [0, \infty)$ be a continuous increasing function. Then 
	\begin{align*}
		|\omega(z)|+\sum_{n=2}^{\infty}|a_n||\omega(z)|^n+\Phi(|\omega(z)|)\sum_{n=2}^{\infty}|a_n|^2|\omega(z)|^{2n}\leq d(0, \partial f(\mathbb{D}))
	\end{align*}
	for $|z|\leq R_1^{\Phi(\omega(z)}$, where $ R_1^{\Phi(\omega(z)}$ is the unique root in $(0, 1)$ of the equation $F(r)=0$, where
	\begin{align}\label{Eqn-5.1}
		F(r):=\frac{r}{(1-r)^2}+\Phi(r)\frac{r^4\left(r^4-3r^2+4\right)}{1-3r^2+3r^4-r^6}-\frac{1}{4}.
	\end{align} 
	The radius $ R_1^{\Phi(\omega(z)}$ is sharp. 
\end{thm}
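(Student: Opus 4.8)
The plan is to run the same three-step scheme used in the proofs of Theorems \ref{Th-2.2} and \ref{Th-6.2}: replace the coefficients and the Schwarz function by their extremal bounds to reduce the left-hand side to a single function of $r$, identify the Euclidean distance $d(0,\partial f(\mathbb{D}))$, and then read off the radius as the place where the two sides balance. For the class $\mathcal{TR}$ the two structural inputs are the classical Rogosinski coefficient estimate $|a_n|\le n$ and the sharp covering estimate $d(0,\partial f(\mathbb{D}))\ge 1/4$, both with the Koebe function $k(z)=z/(1-z)^2$ extremal. These play exactly the role that Lemma A and Lemma B (respectively Lemma C and Lemma D) played in the earlier proofs.

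First I would bound the left-hand side. Since $\omega:\mathbb{D}\to\mathbb{D}$ satisfies $|\omega(z)|\le|z|=r$ and $\Phi$ is increasing, we have $\Phi(|\omega(z)|)\le\Phi(r)$, so together with $|a_n|\le n$,
\begin{align*}
|\omega(z)|+\sum_{n=2}^{\infty}|a_n||\omega(z)|^n+\Phi(|\omega(z)|)\sum_{n=2}^{\infty}|a_n|^2|\omega(z)|^{2n}\le\sum_{n=1}^{\infty}n\,r^n+\Phi(r)\sum_{n=2}^{\infty}n^2r^{2n}.
\end{align*}
The key algebraic step is to put both series in closed form. Using $\sum_{n=1}^{\infty}nr^n=r/(1-r)^2$ and the standard identity $\sum_{n=1}^{\infty}n^2x^n=x(1+x)/(1-x)^3$ with $x=r^2$, after subtracting the $n=1$ term I expect
\begin{align*}
\sum_{n=2}^{\infty}n^2r^{2n}=\frac{r^4\left(r^4-3r^2+4\right)}{(1-r^2)^3}=\frac{r^4\left(r^4-3r^2+4\right)}{1-3r^2+3r^4-r^6},
\end{align*}
which is precisely the coefficient of $\Phi(r)$ appearing in \eqref{Eqn-5.1}. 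Hence the left-hand side is at most $r/(1-r)^2+\Phi(r)\,r^4(r^4-3r^2+4)/(1-3r^2+3r^4-r^6)$, and comparing with $d(0,\partial f(\mathbb{D}))\ge 1/4$ the desired inequality holds exactly when $F(r)\le 0$.

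Next I would justify that \eqref{Eqn-5.1} has a unique root. Since the $n^2$-series vanishes at $r=0$ we get $F(0)=-1/4<0$, while $r/(1-r)^2\to\infty$ as $r\to1^-$ forces $F(r)\to+\infty$; as $F$ is continuous, a root exists in $(0,1)$. Moreover $r\mapsto r/(1-r)^2$ is strictly increasing and $\Phi(r)\sum_{n\ge2}n^2r^{2n}$ is a product of nonnegative nondecreasing functions, so $F$ is strictly increasing and the root $R_1^{\Phi(\omega(z))}$ is unique; for every $r\le R_1^{\Phi(\omega(z))}$ we have $F(r)\le 0$, establishing the inequality.

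Finally, for sharpness I would take $f=k$ the Koebe function, which lies in $\mathcal{TR}$, has $|a_n|=n$, and satisfies $d(0,\partial k(\mathbb{D}))=1/4$, together with the choice $\omega(z)=z$. Then every inequality above becomes an equality, so at $|z|=R_1^{\Phi(\omega(z))}$ the left-hand side equals $r/(1-r)^2+\Phi(r)\,r^4(r^4-3r^2+4)/(1-3r^2+3r^4-r^6)=1/4=d(0,\partial k(\mathbb{D}))$, showing the radius cannot be enlarged. The main obstacle I anticipate is not the algebra but securing the covering estimate $d(0,\partial f(\mathbb{D}))\ge 1/4$ uniformly over $\mathcal{TR}$ (whose members need not be univalent); once that and the coefficient bound $|a_n|\le n$ are in hand, the rest is the monotonicity argument above.
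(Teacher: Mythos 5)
Your proposal is correct and follows essentially the same route as the paper: the coefficient bound $|a_n|\le n$ (Lemma E), the covering estimate $d(0,\partial f(\mathbb{D}))\ge 1/4$ (which the paper secures via Lemma F together with Brannan's covering theorem, exactly the external input you flagged), the closed-form evaluation of $\sum_{n\ge 2}n^2r^{2n}$, the monotonicity argument for the unique root of $F$, and sharpness via the Koebe function (the paper's $l_t$ with $t=0$) with $\omega(z)=z$. No substantive differences.
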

\begin{table}[ht]
	\centering
	\begin{tabular}{|l|l|}
		\hline
		$\Phi(\omega(z))$& $R_1^{\Phi(\omega(z)} $\\
		\hline
		$\omega(z)$& $0.17126$\\
		\hline
		$ e^\omega(z)$& $0.16953$\\
		\hline
		$ \sin \omega(z)$& $0.17126$ \\
		\hline
		$\log(1+\omega(z))$& $0.17129$ \\
		\hline
		$\frac{1}{2}+\frac{\omega(z)}{1-\omega(z)}$& $0.17033$ \\ 
		\hline
	\end{tabular}\vspace{2.5mm}
	\caption{The table exhibits the roots $R^{\Phi(|\omega(z)|)}_{1}$ of the equation \eqref{Eqn-5.1} for different choice of $\Phi(\omega(z))=\omega(z)$, $ e^{\omega(z)} $, $\sin(\omega(z))$, $\log(1+\omega(z))$ and $\frac{1}{2}+\frac{\omega(z)}{1-\omega(z)}$ with $\omega(z)=z$.}
\end{table}
\begin{rem}
	In particular, when $\Phi(|\omega(z)|)=0$, an improved version of \cite[Theorem 4]{Anand-Jain-Kumar-BMMSS-2021} is obtained involving Schwartz functions. On the other hand, when $\Phi(|\omega(z)|)\neq0$, a further improved version of \cite[Theorem 4]{Anand-Jain-Kumar-BMMSS-2021} is obtained.
\end{rem}
\begin{rem}
	In particular, when $\omega(z)=z$, we see that Theorem \ref{Th-5.1} reduces exactly to \cite[Theorem 4]{Anand-Jain-Kumar-BMMSS-2021}. Clearly, Theorem \ref{Th-5.1} improves the result \cite[Theorem 4]{Anand-Jain-Kumar-BMMSS-2021}.
\end{rem}
\subsection{\bf Improved Bohr inequality for typically real functions $\mathcal{TR}$}

In our next result, we improve the Bohr inequality \cite[Theorem 4]{Anand-Jain-Kumar-BMMSS-2021} for the class $\mathcal{TR}$ by incorporating the positive term $S_r/\pi$ in terms of Schwartz function.
\begin{thm}\label{Th-6.3}
	Let $f(z)=z+\sum_{n=2}^{\infty}a_nz^n$ be in the class $\mathcal{TR}$ and $z=re^{i\theta}\in\mathbb{D}$ and and $\Phi : [0, 1]\to [0, \infty)$ be a continuous increasing function. Then 
	\begin{align*}
		|\omega(z)|+\sum_{n=2}^{\infty}|a_n||\omega(z)|^n+\Phi(|\omega(z)|)\left(\frac{S_r}{\pi}\right)\leq d(0, \partial f(\mathbb{D}))
	\end{align*}
	for $|z|=r\leq  R_2^{\Phi(\omega(z)}$, where $R_2^{\Phi(\omega(z)}$ is the unique root in $(0, 1)$ of the equation $F(r)=0$, where
	\begin{align}\label{Eq-55.22}
		F(r):=\frac{r}{(1-r)^2}+\Phi(r)\frac{r^2\left(1+4r^2+r^4\right)}{(1-r^2)^4}-\frac{1}{4}.
	\end{align} 
	The radius $R_2^{\Phi(\omega(z)}$ is sharp. 
\end{thm}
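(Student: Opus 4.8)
The plan is to reduce the claimed inequality to the single real-variable estimate $F(r)\le 0$, mirroring the proof of Theorem \ref{Th-5.1} but with the refinement term $S_r/\pi$ in place of $\sum_{n=2}^\infty|a_n|^2|\omega(z)|^{2n}$. The two classical facts I would invoke for the class $\mathcal{TR}$ are the sharp coefficient bound $|a_n|\le n$ (Rogosinski's theorem, with the Koebe function $k(z)=z/(1-z)^2=\sum_{n=1}^\infty nz^n$ as extremal) and the distance estimate $d(0,\partial f(\mathbb{D}))\ge 1/4$ established in \cite[Theorem 4]{Anand-Jain-Kumar-BMMSS-2021}, which is again attained by $k$.

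First I would bound the two ``linear'' terms. Since $|\omega(z)|\le|z|=r$ and $|a_n|\le n$, summation gives
\[
|\omega(z)|+\sum_{n=2}^\infty|a_n||\omega(z)|^n\le r+\sum_{n=2}^\infty nr^n=\frac{r}{(1-r)^2}.
\]
Next I would bound the refinement term. Writing $S_r/\pi=\sum_{n=1}^\infty n|a_n|^2r^{2n}$ and using $|a_n|^2\le n^2$, together with the closed form $\sum_{n=1}^\infty n^3 x^n = x(1+4x+x^2)/(1-x)^4$ evaluated at $x=r^2$, I obtain
\[
\frac{S_r}{\pi}\le\sum_{n=1}^\infty n^3 r^{2n}=\frac{r^2(1+4r^2+r^4)}{(1-r^2)^4}.
\]
Using monotonicity of $\Phi$ to replace $\Phi(|\omega(z)|)$ by $\Phi(r)$ and combining the two displays shows that the left-hand side is at most $F(r)+\tfrac14$, with $F$ as in \eqref{Eq-55.22}. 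Hence whenever $F(r)\le 0$ the left side is $\le\tfrac14\le d(0,\partial f(\mathbb{D}))$, which is the asserted inequality.

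Then I would address existence and uniqueness of the radius. One has $F(0)=-\tfrac14<0$, while $F(r)\to+\infty$ as $r\to 1^-$ because of the term $r/(1-r)^2$; since both $r\mapsto r/(1-r)^2$ and $r\mapsto\Phi(r)\,r^2(1+4r^2+r^4)/(1-r^2)^4$ are nonnegative and increasing on $(0,1)$, the function $F$ is strictly increasing and therefore has a unique zero $R_2^{\Phi(\omega(z))}\in(0,1)$, and $F(r)\le 0$ precisely for $r\le R_2^{\Phi(\omega(z))}$. For sharpness I would test the Koebe function $f=k$ with $\omega(z)=z$: then $a_n=n$, every inequality above becomes an equality, and $d(0,\partial k(\mathbb{D}))=\tfrac14$, so at $r=R_2^{\Phi(\omega(z))}$ the left-hand side equals $F(r)+\tfrac14=\tfrac14=d(0,\partial k(\mathbb{D}))$, proving the radius cannot be enlarged.

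The argument is essentially routine; the only points requiring care are the series identity for $\sum_{n=1}^\infty n^3 r^{2n}$ and the verification that $F$ is genuinely monotone, which is what guarantees both the uniqueness of the root and the validity of the estimate for every smaller $r$. There is no substantive analytic obstacle, since the coefficient bound $|a_n|\le n$ and the distance bound $d(0,\partial f(\mathbb{D}))\ge 1/4$ for $\mathcal{TR}$ are both already available from the cited literature.
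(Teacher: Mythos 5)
Your proposal is correct and follows essentially the same route as the paper: the coefficient bound $|a_n|\le n$ from Lemma E, the distance estimate $d(0,\partial f(\mathbb{D}))\ge 1/4$, the closed form $\sum_{n=1}^{\infty}n^3r^{2n}=r^2(1+4r^2+r^4)/(1-r^2)^4$, and sharpness via the Koebe function, which is exactly the paper's extremal $l_t(z)=z/(1-2z\cos t+z^2)$ at $t=0$. Your explicit verification that $F$ is increasing with $F(0)=-\tfrac14$ and $F(r)\to+\infty$ is a detail the paper only records in the proof of Theorem \ref{Th-5.1}, but it is the same argument.
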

\begin{table}[ht]
	\centering
	\begin{tabular}{|l|l|}
		\hline
	$\Phi(\omega(z))$& $R_2^{\Phi(\omega(z)} $\\
	\hline
	$\omega(z)$& $0.16864$\\
	\hline
	$ e^\omega(z)$& $0.15459$\\
	\hline
	$ \sin \omega(z)$& $0.16865$ \\
	\hline
	$\log(1+\omega(z))$& $0.16885$ \\
	\hline
	$\frac{1}{2}+\frac{\omega(z)}{1-\omega(z)}$& $0.16069$ \\ 
	\hline
	\end{tabular}\vspace{2.5mm}
	\caption{The table exhibits the roots $R^{\Phi(|\omega(z)|)}_{2}$ of the equation \eqref{Eq-55.22} for different choice of $\Phi(\omega(z))=\omega(z)$, $ e^{\omega(z)} $, $\sin(\omega(z))$, $\log(1+\omega(z))$ and $\frac{1}{2}+\frac{\omega(z)}{1-\omega(z)}$ with $\omega(z)=z$.}
\end{table}
\begin{rem}
	In particular when $\Phi(|\omega(z)|)=0$, then we get an improved version of  \cite[Theorem 4]{Anand-Jain-Kumar-BMMSS-2021} involving Schwartz function and when $\Phi(|\omega(z)|)\neq0$ then we get a more improvement version including the term $S_r/\pi$ of \cite[Theorem 4]{Anand-Jain-Kumar-BMMSS-2021}.
\end{rem}
\begin{rem}
	In particular, when $\omega(z)=z$, we see that Theorem \ref{Th-6.3} reduces exactly to \cite[Theorem 4]{Anand-Jain-Kumar-BMMSS-2021}. Clearly, Theorem \ref{Th-6.3} improves the result \cite[Theorem 4]{Anand-Jain-Kumar-BMMSS-2021}.
\end{rem}
\begin{lemE}\emph{(see \cite[Theorem 3, p. 185]{Goodman-1983})}
	If $f(z)$ is in $\mathcal{TR}$ and $z=re^{i\theta}\in\mathbb{D}$, then the coefficients satisfy the inequality 
	\begin{align}\label{Eq-5.1}
		m_n\leq |a_n|\leq n,
	\end{align}
	where $m_n=\min\{\sin n\theta/\sin\theta\}$ for each $n$. The inequality is sharp for each $n$. 
\end{lemE}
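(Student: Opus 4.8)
The plan is to prove Lemma E through the classical integral representation of typically real functions due to Rogosinski, which realizes every $f\in\mathcal{TR}$ as an average of the extreme kernels $z/(1-2tz+z^2)$, and then to read the coefficient bounds off the Chebyshev polynomials of the second kind that generate these kernels.

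First I would invoke Rogosinski's representation: a normalized analytic $f(z)=z+\sum_{n=2}^{\infty}a_nz^n$ lies in $\mathcal{TR}$ if and only if there is a probability measure $\mu$ on $[-1,1]$ with
\begin{align*}
f(z)=\int_{-1}^{1}\frac{z}{1-2tz+z^2}\,d\mu(t).
\end{align*}
The necessity is the substantive half; it rests on $\mathcal{TR}$ being a compact convex family whose extreme points are exactly the kernels $k_t(z)=z/(1-2tz+z^2)$, $t\in[-1,1]$, combined with the Krein--Milman/Choquet machinery. The next step is to expand the kernel: writing $t=\cos\theta$, the generating identity for the Chebyshev polynomials of the second kind gives
\begin{align*}
\frac{z}{1-2z\cos\theta+z^2}=\sum_{n=1}^{\infty}U_{n-1}(\cos\theta)\,z^n=\sum_{n=1}^{\infty}\frac{\sin n\theta}{\sin\theta}\,z^n.
\end{align*}
Interchanging sum and integral (justified by uniform convergence on compact subsets of $\mathbb{D}$) yields the key formula $a_n=\int_{-1}^{1}U_{n-1}(t)\,d\mu(t)$, so each $a_n$ is real and lies in the convex hull of the range of $U_{n-1}$ on $[-1,1]$.

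The bounds then follow at once from the extreme values of $U_{n-1}(t)=\sin n\theta/\sin\theta$. Since $\mu$ is a probability measure, $\min_{[-1,1]}U_{n-1}\le a_n\le\max_{[-1,1]}U_{n-1}$. A short computation shows $|U_{n-1}(\cos\theta)|=|\sin n\theta/\sin\theta|\le n$, with the value $n$ attained at the endpoint $t=1$ (where $\theta\to0$), so $\max_{[-1,1]}U_{n-1}=n$ and $\min_{[-1,1]}U_{n-1}=m_n$ by the very definition of $m_n$. Hence $m_n\le a_n\le n$; since $m_n\ge -n$ this already forces $|a_n|\le n$, and because $m_n\le 0$ for $n\ge2$ while $m_1=1=a_1$, the two-sided estimate $m_n\le|a_n|\le n$ holds in every case. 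Sharpness is built into the argument: the point mass $\mu=\delta_{1}$ gives the Koebe function $k_1(z)=z/(1-z)^2=\sum_{n\ge1} n z^n$, attaining $a_n=n$, while $\mu=\delta_{t^{*}}$ with $U_{n-1}(t^{*})=m_n$ attains the lower extreme.

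The hard part will be the necessity direction of Rogosinski's representation, i.e.\ identifying the extreme points of $\mathcal{TR}$ and passing to the integral; once that is in hand, everything reduces to the transparent Chebyshev expansion and the endpoint evaluation $U_{n-1}(1)=n$. If one prefers to sidestep the Choquet apparatus, an alternative route is to use the one-to-one correspondence $p(z)=(1-z^2)f(z)/z$ between $\mathcal{TR}$ and the class of functions with nonnegative real part normalized by $p(0)=1$, apply the Herglotz representation to $p$, and recover the same kernel expansion; the coefficient extraction and the evaluation of $U_{n-1}$ at the endpoints are then the only remaining computations.
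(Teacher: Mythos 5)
Your argument is correct, but note that the paper does not prove Lemma~E at all: it is imported verbatim from Goodman's book (Theorem 3, p.~185), and the route you take --- Rogosinski's representation $f(z)=\int_{-1}^{1}z(1-2tz+z^2)^{-1}\,d\mu(t)$, the Chebyshev expansion $U_{n-1}(\cos\theta)=\sin n\theta/\sin\theta$ with $|U_{n-1}|\le n$ and $U_{n-1}(1)=n$, and sharpness via the point masses $\delta_1$ and $\delta_{t^*}$ --- is precisely the standard proof found in that source. Your handling of the slightly awkward two-sided form $m_n\le |a_n|\le n$ (trivial for $n\ge 2$ since $m_n\le 0$, and an equality for $n=1$) is also sound, so no gap remains beyond the classical representation theorem you correctly identify as the substantive ingredient.
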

\begin{lemF}\emph{(see \cite[Theorem 1, p. 136]{Remizova-1963})}
	Let the function $f$ be in $\mathcal{TR}$ and $z=re^{i\theta}\in\mathbb{D}$. Then 
	\begin{align*}
		|f(z)|\geq\begin{cases}
			\displaystyle\bigg|\frac{z}{(1+z)^2}\bigg|, \quad\quad\quad\;\;\;\mbox{if}\; {\rm Re}\left(\frac{1+z^2}{z}\right)\geq 2;\vspace{2mm}\\
			\displaystyle\frac{r\left(1-r^2\right)|\sin\theta|}{|1-z^2|^2},\quad\mbox{if}\; {\rm Re}\left(\frac{1+z^2}{z}\right)\leq 2;\vspace{2mm}\\
			\displaystyle\bigg|\frac{z}{(1-z)^2}\bigg|, \quad\quad\quad\;\;\;\mbox{if}\; {\rm Re}\left(\frac{1+z^2}{z}\right)\leq -2.
		\end{cases}
	\end{align*}
	The result is sharp.
\end{lemF}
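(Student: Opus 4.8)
The plan is to reduce the problem to a planar convexity computation via the classical Rogosinski--Robertson integral representation of typically real functions. First I would recall that every $f\in\mathcal{TR}$ admits the representation
\[
f(z)=\int_{-1}^{1}\frac{z}{1-2tz+z^2}\,d\mu(t)=\int_{-1}^{1}k_t(z)\,d\mu(t),
\]
where $\mu$ is a probability measure on $[-1,1]$ and $k_t(z):=z/(1-2tz+z^2)$. Fixing a non-real $z=re^{i\theta}\in\mathbb{D}$, this exhibits $f(z)$ as a $\mu$-average of the points $k_t(z)$, so $f(z)$ lies in the closed convex hull $K:=\overline{\mathrm{conv}}\{k_t(z):t\in[-1,1]\}$. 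Consequently $|f(z)|\ge\mathrm{dist}(0,K)$, and the whole lemma becomes the computation of this distance together with the exhibition of extremal measures.

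The key structural observation is that the arc $\Gamma:=\{k_t(z):t\in[-1,1]\}$ is a circular arc through the origin. Writing $\zeta:=z+1/z=(1+z^2)/z$, a direct manipulation gives $1/k_t(z)=\zeta-2t$, so as $t$ runs over $[-1,1]$ the reciprocal $1/k_t$ traverses the horizontal segment of length $4$ centered at $\zeta$, whose real parts fill $[\mathrm{Re}\,\zeta-2,\ \mathrm{Re}\,\zeta+2]$. Under the inversion $w\mapsto 1/w$ this segment maps onto an arc of the circle $C$ that is the image of the full horizontal line $\{\mathrm{Im}\,w=\mathrm{Im}\,\zeta\}$; since such a line does not pass through the origin, $C$ passes through the origin. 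The endpoints $t=\pm1$ give exactly $k_1(z)=z/(1-z)^2$ and $k_{-1}(z)=z/(1+z)^2$, so the chord of $\Gamma$ joins these two points. Crucially, the origin corresponds to $t=\infty$ and therefore lies on the \emph{complementary} arc of $C$; hence the line through $k_{-1}(z)$ and $k_1(z)$ separates the origin from the interior of $K$, and the nearest point of $K$ to the origin must lie on this chord.

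With the geometry in place, the three cases are the three positions of the perpendicular foot from the origin onto the chord. Noting that $\mathrm{Re}\,\zeta=(r+1/r)\cos\theta$ is precisely the quantity $\mathrm{Re}\big((1+z^2)/z\big)$ appearing in the hypotheses, the transition values $\mathrm{Re}\,\zeta=\pm2$ are exactly where an endpoint $k_{\pm1}(z)$ reaches the point of $C$ diametrically opposite the origin. By Thales' theorem the angle subtended at the other endpoint is then a right angle, so at these thresholds the perpendicular foot coincides with an endpoint. For $-2\le\mathrm{Re}\,\zeta\le2$ the foot falls in the interior of the chord, and the distance equals the perpendicular distance from $0$ to the line through $k_{\pm1}(z)$, namely $|\mathrm{Im}(\overline{k_1(z)}\,k_{-1}(z))|/|k_{-1}(z)-k_1(z)|$; a routine computation using $k_{-1}(z)-k_1(z)=-4z^2/(1-z^2)^2$ and the identity $|(1-r^2)+2ir\sin\theta|^2=|1-z^2|^2$ reduces this to $r(1-r^2)|\sin\theta|/|1-z^2|^2$, which is the middle case. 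For $\mathrm{Re}\,\zeta\ge2$ (resp.\ $\le-2$) the foot leaves the chord past $k_{-1}(z)$ (resp.\ $k_1(z)$), so the nearest point is that endpoint and $\mathrm{dist}(0,K)=|z/(1+z)^2|$ (resp.\ $|z/(1-z)^2|$), giving the first and third cases.

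Finally I would establish sharpness by producing, for each $z$, a typically real function attaining the bound. In the first and third cases the extreme functions $k_{-1}(z)=z/(1+z)^2$ and $k_1(z)=z/(1-z)^2$ are themselves typically real and realize equality. In the middle case the nearest point is a genuine convex combination $\lambda k_1(z)+(1-\lambda)k_{-1}(z)$ with $\lambda\in[0,1]$, so the function $f=\lambda\,\dfrac{z}{(1-z)^2}+(1-\lambda)\,\dfrac{z}{(1+z)^2}$, being a convex combination of members of $\mathcal{TR}$, lies in $\mathcal{TR}$ and attains $|f(z)|=\mathrm{dist}(0,K)$. The main obstacle is the geometric bookkeeping of the middle steps: verifying that the origin always sits on the complementary arc so that the chord rather than the arc faces the origin, and pinning the case boundaries to $\mathrm{Re}\,\zeta=\pm2$ via the diameter and right-angle argument. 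Once this is secured, the distance formulas are only short algebra.
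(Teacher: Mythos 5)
The paper contains no proof of Lemma~F to compare against: it is imported verbatim from Remizova's 1963 theorem (via Goodman's book), and the paper only ever uses its consequence $|f(z)|\ge 1/4$ in the proofs of Theorems 5.1 and 5.2. Judged on its own, your reconstruction is correct and is essentially the classical argument behind the cited result. The Robertson representation $f(z)=\int_{-1}^{1}k_t(z)\,d\mu(t)$ does place $f(z)$ in the closed convex hull of the arc $\{k_t(z)\}$; the identity $1/k_t(z)=\zeta-2t$ with $\zeta=(1+z^2)/z$ correctly linearizes that arc; and your middle-case algebra checks out: with $a=k_1(z)$, $b=k_{-1}(z)$ one has $b-a=-4z^2/(1-z^2)^2$, hence $|b-a|=4r^2/|1-z^2|^2$, while $\bar a b=r^2/\left((1-r^2)+2ir\sin\theta\right)^2$ gives $|\mathrm{Im}(\bar a b)|=4r^3(1-r^2)|\sin\theta|/|1-z^2|^4$, and the quotient is exactly $r(1-r^2)|\sin\theta|/|1-z^2|^2$. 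Your sharpness discussion is also sound, since $k_{\pm 1}\in\mathcal{TR}$ and $\mathcal{TR}$ is a convex class, so every point of the chord is attained by some typically real function.

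Two small repairs. First, your opening claim that ``the arc $\Gamma$ is a circular arc through the origin'' is a slip: $\Gamma$ does not pass through $0$; rather the circle $C$ does, with $0$ on the complementary arc (which is what you actually use, and what makes the separation-by-the-chord argument work; note it needs $z$ non-real, the real case being the trivial one-dimensional hull $[r/(1+r)^2,\,r/(1-r)^2]$). Second, the regime bookkeeping you defer to Thales-plus-continuity can be closed in one line: since $|b+\lambda(a-b)|^2$ is a convex quadratic in $\lambda$, the nearest point of the segment is the endpoint $k_{-1}(z)$ iff $\mathrm{Re}\big[\overline{(k_1-k_{-1})}\,k_{-1}\big]\ge 0$, and a direct computation shows this quantity equals $r^3\left[(1+r^2)\cos\theta-2r\right]$ up to a positive factor, i.e.\ is nonnegative iff $\mathrm{Re}\left((1+z^2)/z\right)\ge 2$; symmetrically the nearest point is $k_1(z)$ iff $\mathrm{Re}\left((1+z^2)/z\right)\le -2$, which pins both thresholds and directions exactly as you asserted. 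Finally, observe that the middle inequality as stated in the lemma holds on all of $\mathrm{Re}\left((1+z^2)/z\right)\le 2$ (overlapping the third case) simply because the distance from $0$ to the full chord line is always a lower bound for the distance to the segment, so your derivation does cover the stated (overlapping) case ranges.
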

\begin{proof}[\bf Proof of the Theorem \ref{Th-5.1}]
First of all we assume that $f(z)=z+\sum_{n=2}^{\infty}a_nz^n\in\mathcal{TR}$. Then by the Lemma F and the proof of \cite[Theorem 2]{Anand-Jain-Kumar-BMMSS-2021} (see also \cite{Brannan-GMJ-1969}), it follows that the distance between the origin and the boundary of $f(\mathbb{D})$ satisfies the inequality
\begin{align}\label{Eq-5.2}
|f(z)|\geq \frac{1}{4},\; z\in\mathbb{D}.
\end{align}
Using the inequality \eqref{Eq-5.2}, we see that 
\begin{align}\label{Eq-5.3}
d(0, \partial f(\mathbb{D}))=\inf_{z\in\partial f(\mathbb{D})}|f(\xi)|\geq \frac{1}{4}.
\end{align}
We see that the function $F$ as defined in \eqref{Eq-55.22} is a continuous real valued function on $[0, 1]$ satisfying
\begin{align*}
	F(0)=-\frac{1}{4}\; \mbox{and}\; \lim\limits_{r\to 1^{-}}F(r)+\infty.
\end{align*}
Moreover, since $\Phi$ is an increasing function, it can be shown that $F$ is also increasing on $[0, 1]$ as 
\begin{align*}
	\frac{r}{(1-r)^2}\; \mbox{and}\; \frac{r^4\left(r^4-3r^2+4\right)}{1-3r^2+3r^4-r^6}
\end{align*}
are increasing on $[0, 1]$. Hence, $R_1^{\Phi(\omega(z)}$ is the unique root in $(0, 1)$ of $F(r)=0$.\vspace{1.2mm}

By the inequalities \eqref{Eq-5.1} and \eqref{Eq-5.3}, in view of $|\omega(z)|\leq |z|=r$, a computation shows that
\begin{align*}
	|\omega(z)|&+\sum_{n=2}^{\infty}|a_n||\omega(z)|^n+\Phi(|\omega(z)|)\sum_{n=2}^{\infty}|a_n|^2|\omega(z)|^{2n}\\&\leq r+\sum_{n=2}^{\infty}nr^n+\Phi(r)\sum_{n=2}^{\infty}n^2r^{2n}\\&=\frac{r}{(1-r)^2}+\Phi(r)\frac{r^4\left(r^4-3r^2+4\right)}{1-3r^2+3r^4-r^6}\\&\leq\frac{1}{4}\\& \leq d(0, \partial f(\mathbb{D}))
\end{align*}
if $r<R_1^{\Phi(\omega(z)}$. Thus the desired inequality is obtained.\vspace{1.2mm}

To show that the radius $ R_1^{\Phi(\omega(z)}$ is sharp, for $-\pi<t\leq \pi$, we consider the function $l_t : \mathbb{D}\to\mathbb{C}$ defined by 
\begin{align}\label{Eq-5.5}
	l_t(z)=\frac{z}{1-2z\cos t+z^2},\; z\in\mathbb{D}.
\end{align}
For $|z|=R_1^{\Phi(\omega(z)}$, $\omega(z)=z$ and $t=0$, in view of $f(z)=l_t$, we see that
\begin{align*}
	|\omega(z)|&+\sum_{n=2}^{\infty}|a_n||\omega(z)|^n+\Phi(|\omega(z)|)\sum_{n=2}^{\infty}|a_n|^2|\omega(z)|^{2n}\\&= R_1^{\Phi(\omega(z)}+\sum_{n=2}^{\infty}n\left(R_1^{\Phi(\omega(z)}\right)^n+\Phi\left(R_1^{\Phi(\omega(z)}\right)\sum_{n=2}^{\infty}|a_n|^2\left(R_1^{\Phi(\omega(z)}\right)^{2n}\\&=\frac{R_1^{\Phi(\omega(z)}}{\left(1-R_1^{\Phi(\omega(z)}\right)^2}+\Phi\left(R_1^{\Phi(\omega(z)}\right)\frac{\left(R_1^{\Phi(\omega(z)}\right)^4\left(\left(R_1^{\Phi(\omega(z)}\right)^4-3\left(R_1^{\Phi(\omega(z)}\right)^2+4\right)}{1-3\left(R_1^{\Phi(\omega(z)}\right)^2+3\left(R_1^{\Phi(\omega(z)}\right)^4-\left(R_1^{\Phi(\omega(z)}\right)^6}\\&=\frac{1}{4}\\&=d(0, \partial f(\mathbb{D})).
\end{align*}
This shows that the radius $R_1^{\Phi(\omega(z)}$ is sharp. 
\end{proof}
\begin{proof}[\bf Proof of the Theorem \ref{Th-6.3}]
	By the inequalities \eqref{Eq-5.1} and \eqref{Eq-5.3}, in view of $|\omega(z)|\leq |z|=r$, a computation shows that
	\begin{align*}
		|\omega(z)|&+\sum_{n=2}^{\infty}|a_n||\omega(z)|^n+\Phi(|\omega(z)|)\left(\frac{S_r}{\pi}\right)\\&\leq r+\sum_{n=2}^{\infty}nr^n+\Phi(r)\sum_{n=1}^{\infty}n^3r^{2n}\\&=\frac{r}{(1-r)^2}+\Phi(r)\frac{r^2\left(1+4r^2+r^4\right)}{(1-r^2)^4}\\&\leq\frac{1}{4}\\& \leq d(0, \partial f(\mathbb{D}))
	\end{align*}
	if $r\leq R_2^{\Phi(\omega(z)}$. Thus the desired inequality is obtained.\vspace{1.2mm}
	
	To show that the radius $ R_2^{\Phi(\omega(z)}$ is sharp, for $-\pi<t\leq \pi$, we consider the function $l_t : \mathbb{D}\to\mathbb{C}$ defined by 
	\begin{align*}
		l_t(z)=\frac{z}{1-2z\cos t+z^2},\; z\in\mathbb{D}.
	\end{align*}
	For $|z|=R_2^{\Phi(\omega(z)}$, $\omega(z)=z$ and $t=0$, in view of $f(z)=l_t$, we see that
	\begin{align*}
		|\omega(z)|&+\sum_{n=2}^{\infty}|a_n||\omega(z)|^n+\Phi(|\omega(z)|)\left(\frac{S_r}{\pi}\right)\\&= R_2^{\Phi(\omega(z)}+\sum_{n=2}^{\infty}n\left(R_2^{\Phi(\omega(z)}\right)^n+\Phi\left(R_2^{\Phi(\omega(z)}\right)\sum_{n=1}^{\infty}n|a_n|^2\left(R_{\Phi(r)}\right)^{2n}\\&=\frac{R_2^{\Phi(\omega(z)}}{\left(1-R_2^{\Phi(r)}\right)^2}+\Phi\left(R_2^{\Phi(\omega(z)}\right)\frac{\left(R_2^{\Phi(\omega(z)}\right)^2\left(1+4\left(R_2^{\Phi(\omega(z)}\right)^2+\left(R_2^{\Phi(\omega(z)}\right)^4\right)}{\left(1-\left(R_2^{\Phi(\omega(z)}\right)^2\right)^4}\\&=\frac{1}{4}\\&=d(0, \partial f(\mathbb{D})).
	\end{align*}
	This shows that the radius $R_2^{\Phi(\omega(z)}$ is sharp. 
\end{proof}

\section{\bf Declaration}
\noindent\textbf{Compliance of Ethical Standards:}\\

\noindent\textbf{Conflict of interest:} The authors declare that there is no conflict  of interest regarding the publication of this paper.\vspace{1.5mm}

\noindent\textbf{Data availability statement:}  Data sharing not applicable to this article as no datasets were generated or analysed during the current study.

\end{document}